\newcommand{\D}{\ensuremath{\mathbb{D}_{\mathbf{K}}}}
\newcommand{\K}{\ensuremath{\mathbf{K}}}
\newcommand{\N}{\ensuremath{\mathbb N}}
\newcommand{\T}{\ensuremath{Tr_{\mathbf{K}/\mathbf{Q}}}}
\newcommand{\Z}{\ensuremath{\mathbb Z}}
\newcommand{\R}{\ensuremath{\mathbb R}}
\newcommand{\Q}{\ensuremath{\mathbb Q}}
\newcommand{\C}{\ensuremath{\mathbb C}}
\newcommand\GG{{\mathbf G}}
\newcommand\Okr{{\mathcal O}}
\newcommand{\ec}{\textrm}
\newcommand{\F}{\ensuremath{\mathbf F}}
\newcommand\QQ{\mathbb Q}
\newcommand\RR{\mathbb R}
\newcommand{\nn}{\mathtt{n}}
\newcommand\G{\Gamma}
\newcommand\ld{\lambda}
\newcommand\n{\nu}
\newcommand\x{\xi}
\newcommand\irr{\Pi}
\newcommand{\mm}{\mathit{j}}
\newcommand{\ri}{\ensuremath{O_{\mathbf{K}}}} % ring of integers!
\newcommand{\xbar}{\overline{x}}
\newcommand{\ybar}{\overline{y}}
\newcommand{\pa}{\boldsymbol{p}_1}
\newcommand{\pb}{\boldsymbol{p}_2}
\newcommand{\isdef}{:=}
\theoremstyle{plain}		
	\newtheorem{theorem}{Theorem}[section]
	\newtheorem{prop}[theorem]{Proposition}
	\newtheorem{heur}[theorem]{Heuristic Theorem}
	\newtheorem{cor}[theorem]{Corollary}
     \newtheorem{lemma}[theorem]{Lemma}
	\newtheorem{definition}[theorem]{Definition}
	\newtheorem{hypoth}[theorem]{Hypothesis}
\theoremstyle{remark}		
	\newtheorem*{remark}{Remark}
\numberwithin{equation}{section}
\begin{document}

\title[Analytic Continuation of the Asai L-function]{Quadratic Base Change and the Analytic Continuation of the Asai L-function: A new trace formula approach}
\author{P. Edward Herman} 
\email{peherman@math.uchicago.edu}
\address{University of Chicago,Dept. of Mathematics,
5734 S. University Avenue,
Chicago, Illinois 60637}

%\classification{11F72}

 \keywords{trace formula, beyond endoscopy}

\thanks{This research was supported by an NSF Mathematical Sciences Research Institutes Post-Doc and by the American Institute of Mathematics.}
\begin{abstract}
Using Langlands's  beyond endoscopy idea, we study the Asai L-function associated to a real quadratic field $\mathbf{K}/\Q.$  We prove that the Asai L-function associated to a cuspidal automorphic representation over $\mathbf{K}$ has analytic continuation to the complex plane with at most a simple pole at $s=1.$ We then show if the L-function has a pole then the representation is a base change from $\Q$.

While this result is known using integral representations from the work of Asai and Flicker, the approach here uses novel analytic number techniques and gives a deeper understanding of the geometric side of the relative trace formula. Moreover, the approach in this paper will make it easier to grasp more complicated functoriality via beyond endoscopy. 
%Langlands ultimately expects studying functoriality via beyond endoscopy to be the correct one.  

We also describe a connection of Langlands's technique to the distinguishing of representations via periods.

%Rather than dealing with conjugacy classes and their associated orbital integrals over the quadratic field $\mathbf{K}$ and $\Q$ respectively (as in the traditional trace formula approach), we compare sums of exponential sums. We obtain an equality between two geometric sides of the trace formula via an exponential sum identity of Zagier. 

%We actually go further than to prove th
%Along with proving the Asai L-function has only a simple pole, we also prove its analytic continuation to the entire complex plane via an extra averaging. 

%We hope could be extended so as to allow one to apply {\it beyond endoscopy} methods to the study of other L-functions.

% If the Asai L-function associated to an automorphic form over $\mathbf{K}$ has a pole, then the form is a base change from $\Q$. We prove this and further prove the analytic continuation of the L-function. This is one of the first examples of using a trace formula to get such information. A hope of Langlands is that general L-functions can be studied via this method.

\end{abstract}
\maketitle

\section{Introduction}

%Understanding the analytic continuation of L-functions is a central object of interest in number theory. If the L-function is associated to an automorphic form, then the analytic properties of the function are key in understanding the form itself. This is clearly demonstrated with the Asai L-function in this paper. We will now define it. 

Steps have been made towards a better understanding of Langlands's idea of beyond endoscopy.   Frenkel, Langlands, and Ng\^o in the paper \cite{FLN} applied beyond endoscopy to the Arthur-Selberg trace formula and made a first step of removing the trivial representation so as to understand the key poles of the L-functions in question. In a slightly different direction, there has been a study of approaching beyond endoscopy questions via the relative trace formula. Earliest in this approach, are Sarnak \cite{S} and Venkatesh \cite{V} using the Kuznetsov trace formula for the standard L-function and symmetric square L-function respectively. The Rankin-Selberg L-function is studied by the author in \cite{H}. In this paper, we study the Asai L-function or the twisted tensor L-function. 

Following \cite{AR} there are three ways to define the Asai L-function: via the Langlands correspondence and Langlands parameters; via a Rankin-Selberg integral; and via the Langlands-Shahidi method. Using a certain kind of Rankin-Selberg integral classically in \cite{A} and adelically in \cite{F}, a certain Dirichlet series expression can be formulated for the Asai L-function. We now explain this series.

%Understanding the analytic continuation of $L$-functions is a primary goal in number theory. For $L$-functions associated to automorphic forms the analytic properties of the $L$-functions are key in understanding the forms themselves. This is clearly demonstrated by the case of the Asai $L$-function which is the object of interest in this paper.

%Let $\mathbf{K}=\Q(\sqrt{D})$ be a real quadratic extension of $\Q,$ with $D$ a prime. We assume for computational clarity in this paper that $\mathbf{K}$ has class number one. Let $\Pi$ be an automorphic form with level one and trivial nebentypus over the field $\K.$ The details we need of a automorphic form we save to next section, and for more elaborate details we refer to \cite{BMP1}, \cite{BMP2}, and \cite{V}. If it is more comfortable, one can think of a Hilbert modular form instead of an automorphic form over $\K.$ 
%The form has associated to it Fourier coefficients $\{c_{\mu}(\Pi)\}$ parametrized by integral ideals $\mu.$

Let $D$ be positive square-free rational integer and let $\mathbf{K}$ be the real quadratic extension $\Q(\sqrt{D})$. 
%In order to make the computations as clear as possible we assume that $\mathbf{K}$ has class number one, but this restriction is not essential for the results. 
Let $\Pi$ be an automorphic representation of $GL_2$ over $\K.$ (The necessary background information about $\Pi$ and the associated Hecke eigenform $\phi_{\Pi}$ is given in Section \ref{sec:prelim}. For a more extensive discussion see \cite{BMP1}, \cite{BMP2}, and \cite{V}.)
The eigenform $\phi_{\Pi}$ has associated Fourier coefficients $\{c_{\mu}(\Pi)\}$ parametrized by integral ideals $\mu$ of $\mathbf{K}$. The standard $L$-function associated with the form $\phi_{\Pi}$ is given by $$L(s,\Pi)=\sum_{\mu} \frac{c_{\mu}(\Pi)}{\mathbb{N}(\mu)^s}.$$ It is well known $L(s,\Pi)$ has analytic continuation to the complex plane and satisfies a functional equation. 

The Dirichlet series of the Asai $L$-function for $\Pi$ is a sort of sub-series of $L(s,\Pi)$ and is defined by \begin{equation}\label{eq:asailf}L(s,\Pi,\mathrm{Asai})=\zeta(2s)\sum_{n=1}^\infty  \frac{c_{n}(\Pi)}{n^s}.\end{equation} Asai \cite{A} proved that a certain Rankin-Selberg integral integrating a parallel weight Hilbert modular form restricted to $\Q$ against an Eisenstein series (over $\Q$) can be expressed as such a Dirichlet series and can be analytically continued to the entire complex plane except possibly for a simple pole at $s=1.$ He also proved that it has an Euler product and satisfies a functional equation by using the analytic properties of the Eisenstein series intrinsic to the integral representation. Further, if it does have a pole then the Hilbert modular form is a base change or a lift of a classical modular form over $\Q.$ The notation of such a lift--if the Hilbert modular form is associated to an automorphic representation $\Pi$ and the modular form associated to a representation $\pi$--is standardly denoted by $\Pi=BC_{\K/\Q}(\pi).$ In terms of the standard L-function of $\Pi=BC_{\K/\Q}(\pi),$  base change can be expressed as the decomposition $$L(s,\Pi)=L(s,\pi)L(s,\pi \otimes \chi_D),$$ where $\chi_D$ is the character associated to the field $\K=\Q(\sqrt{D}).$

Following Asai, Flicker in \cite{F} took an adelic treatment of Asai's integral representation and proved the same results as Asai but for a general automorphic representation on a real quadratic extension of a number field. Again, the key ingredient in obtaining the analytic continuation of the L-function is the analytic continuation of the Eisenstein series on $GL(n)$ from \cite{JS}. He also proved that an automorphic representation over the quadratic field is distinguished with respect to the ground field precisely when its corresponding Asai L-function has a pole. Recall that a representation $\Pi$ over $\K$ is distinguished if there exists a vector $\phi_{\Pi} \in \Pi$ such that $$\int_{GL_2(\K)Z( \mathbb{A}_{\K}) \backslash GL_2(\mathbb{A}_{\K})} \phi_{\Pi}(g) dg \neq 0.$$ 
%The Asai $L$-function having a pole is equivalent to the standard L-function decomposing as a product of two $L$-functions over $\Q$ of degree 2. 

In this paper, using a beyond endoscopy approach and completely avoiding integral representations we prove the analytic continuation of the Asai L-function for an arbitrary automorphic representation over the field $\mathbf{K}=\Q(\sqrt{D})$ having class number one. The assumption on the class number is for simplicity in a calculation that is already technically difficult. Further we show that if the Asai L-function has a simple pole at $s=1$ then the associated representation is a base change. 

One can consider the method of the integral representation of the Asai L-function in \cite{A}, \cite{F} as an easier way to get the analytic continuation, but in consideration of Langlands's beyond endoscopy idea, a trace formula approach seems the most systematic way to get analytic continuation for all L-functions $L(s,\pi,\rho)$ associated to a dual group representation $\rho$ of an automorphic representation $\pi$ of a group $G.$ For example, currently there is no general procedure of using integral representations to get the analytic continuation for the symmetric power L-functions. This paper goes beyond the goals of beyond endoscopy in showing analytic continuation to the complex plane which is more difficult than whether the L-function has a pole at $s=1$ or not. The question requires a deeper understanding of the geometric side of the trace formula, and this paper is a step in the direction of understanding all the analytic structure of the L-function at the same time. Recently the author made another step in this direction in reclaiming the functional equation of the standard L-function associated to a $GL_2$ automorphic form from just the trace formula \cite{H1}.

\section{Setup of the problem}

%To study these representations and their associated Asai $L$-function we use the Kuznetsov trace formula which is a special case of the relative trace formula. 
We use a technique originally formulated in \cite{L}, and modified in \cite{S}, to average over the spectrum of cuspidal automorphic representations over the quadratic field $\K,$ along with an averaging over the associated Fourier coefficients. In other words, we study the sum (ignoring test functions and convergence issues of the $\Pi$-sum) \begin{equation}\label{eq:BE}\sum_{\Pi}L(s,\Pi,\mathrm{Asai}).\end{equation}  
However, in order to make the analysis easier on the geometric side of the trace formula, we take a smoothed average of Asai L-functions $$\frac{1}{X} \sum_{\Pi} \sum_{n }g(n/X) c_n(\Pi),$$ for $g \in C_0^\infty(\R^{+}).$ Here  $C_0^\infty(\R^{+})$ denotes smooth compactly supported functions on the postive real numbers. By Mellin inversion this equals \begin{equation}\label{eq:mellga}\frac{1}{2\pi i}\int_{\Re(s)=\sigma} G(s)\left[\sum_{\Pi} L(s,\Pi,\mathrm{Asai})\right]\frac{X^{s-1}}{\zeta(2s)} ds,\end{equation} for $\sigma$ sufficiently large and $G(s)=\int_0^\infty g(x)x^{s-1}dx.$ So how far left we can shift the contour of the integral directly depends on the analytic properties of the Asai L-function. Expecting there to be a simple pole at $s=1,$ the main terms after the contour shift past $\Re(s)=1$ should not depend on $X,$ and the remainder term should decay in $X$ (i.e.\ it should be of size $O(X^{-\delta}),$ for some $\delta>0.$).

 More specifically the calculation should yield  \begin{equation}\label{eq:BE1}\frac{1}{X} \sum_{n}g(\frac{n}{X})\sum_{\Pi}c_{n}(\Pi)\quad = \quad \frac{G(1)}{\zeta(2)}\sum_{\Pi} Res_{s=1}L(s,\Pi,\text{Asai}) +O(X^{-\delta}), \quad \delta >0.\end{equation}   As we said, if the Asai $L$-function has a simple pole at $s=1,$ then $\Pi$ is a base change from a form over $\Q,$ specifically a representation $\pi_D$ of level $D$ (see \cite{A} for the case of Hilbert modular forms).    So our heuristic theorem in \eqref{eq:BE1} becomes  \begin{equation}\label{eq:BE2} \sum_{\Pi}  Res_{s=1}L(s,\Pi,\text{Asai})\quad = \sum_{\pi_D} A(\pi_D)L(1,sym^2(\pi_D)) +O(X^{-\delta}),\quad \delta >0,\end{equation} where $A(\pi)$ is a certain constant associated to $\pi_D.$ This heuristic theorem is still not quite accurate as \eqref{eq:BE2} says every $\pi_D$ has a cuspidal base change. This is not true as Maass's cuspidal theta forms which have naturally associated representations $\pi_{D,\mu}$ constructed from Hecke characters $\mu$ over a quadratic field have base changes that are Eisenstein series. 

Taking this phenomenon
into account, our almost complete heuristic theorem for the calculation is  \begin{equation}\label{eq:BE3} \sum_{\Pi}  Res_{s=1}L(s,\Pi,\text{Asai})\quad =\quad  \sum_{\substack{\pi_D\neq \pi_{D,\mu} }} A(\pi_D)L(1,sym^2(\pi_D)) +O(X^{-\delta}), \quad \delta >0.\end{equation}

This asymptotic is still lacking in some respects, as it only tells us information at or near $s=1.$ From the definition of the Asai $L$-function in \eqref{eq:asailf}, we can only shift the contour in \eqref{eq:mellga} (ignoring whether there is a pole or not at $s=1$) to the left at most to the zeroes of $\zeta(2s).$ The point is that the Dirichlet series $\sum_{n=1}^\infty \frac{c_{n}(\Pi)}{n^s}$ does not tell us the whole story of analytic continuation of the Asai L-funciton. This phenomenon can also be seen in the case of the Rankin-Selberg L-function \cite{H}. To remedy this we do an extra averaging in $\mm \in \Z,$ which should, and does, remove the poles created by the zeroes of the $\zeta(2s).$
To be more precise, we have \begin{multline} \frac{1}{X} \sum_{\mm} \sum_{n}g(\frac{\mm^2 n}{X})\sum_{\Pi}c_{n}
(\Pi)=\\ \int_{\Re(s)=\sigma} G(s)\left[\sum_{\Pi} \frac{L(s,\Pi,\mathrm{Asai})}{\zeta(2s)}\left(\sum_
{\mm=1}\frac{1}{\mm^{2s}}\right)\right]X^{s-1} ds=\int_{\Re(s)=\sigma} G(s)\sum_{\Pi} L(s,\Pi,\mathrm
{Asai})X^{s-1} ds.\end{multline}

Thus the heuristic main theorem is,
\begin{heur}\label{hth}
 \begin{equation}\label{eq:BE4} 
 \frac{1}{X} \sum_{\mm} \sum_{n}g(\frac{\mm^2 n}{X})\sum_{\Pi}c_{n}(\Pi)\quad= \sum_{\substack{\pi_D\neq \pi_{D,\mu} }} A(\pi_D)L(1,sym^2(\pi_D)) +O(X^{-M}), \end{equation} for any positive integer $M >0.$  \end{heur} We explain very explicitly the main Theorem \ref{theo} associated to \eqref{eq:BE4} in Section \ref{sec:haq}.

To pass from Theorem \ref{hth} to a statement about individual L-functions, we require a hypothesis which allows us to
match representations.
\begin{hypoth}\label{stu}
Assume for any $ \Pi,$ there exists $M_1,M_2 >0,$ such that \begin{equation}\label{eq:assump}\frac{1}{X}\sum_{n,m}g(\mm^2 n/X) c_n(\Pi) \ll (1+|t_{\Pi_1}|)^{M_1} (1+|t_{\Pi_2}|)^{M_2} ,\end{equation} where $t_{\Pi_1}, i=1,2$ are the archimedean parameters(eigenvalue, weight) associated to $\Pi.$\end{hypoth}

\begin{remark} This hypothesis allows us to reduce the infinite sum over $\Pi$ to a finite one. Also, the hypothesis is equivalent to knowing $L(s,\Pi, \mathrm{Asai})$ has a functional equation and is analytic except for at most poles at $s=0,1.$ In other words, to understand the complete analytic continuation of the Asai L-function at the possible poles, we assume knowledge of the functional equation and the L-function being analytic outside of the critical strip. 

It is important to say that knowing the functional equation and the locations of the poles are two different problems. The functional equation of the Asai L-function associated to any automorphic representation has been known for awhile by using the Langlands-Shahidi method, see \cite{F}. However, this method only gives meromorphic continuation and tells no information inside the ``critical strip."
We see this knowledge of meromorphic continuation of an L-function associated to automorphic representation, but there is lack of knowledge of its analytic continuation in Langlands's paper \cite{L1} for the symmetric cube L-function. It took many years later to realize the analytic continuation of the symmetric cube L-function by the work of Kim and Shahidi \cite{KS}.
\end{remark}

 Ideally, we would like to obtain all information about the Asai $L$-function using only the trace formula, and it is certainly conceivable
that it is possible to derive Hypothesis \ref{stu} (or something similar) directly from the trace formula. %Being able to reduce an equality of two different trace formulas from an infinite dimensional space to a finite dimensional has certainly been done, but requires an adelic setting. For example, the ``matching" for the Arthur-Selberg trace formula (and its twisted variant) are done for cyclic base change in \cite{La}. 
In fact a similar question of the trace formula implying the functional equation for the standard L-function is answered in \cite{H1}. However, we do not address this issue for the Asai L-function in the present paper, preferring simply to take the uniformity
assumption of Hypothesis \ref{stu} as given.
 
 Reduced to a finite sum of $\Pi$, we can use Hecke operators to isolate a single representation $\Pi$ over $\K$ and match it to a single representation $\pi_D$ over $\Q,$ getting the following corollary:
\begin{cor}\label{bc}
Assuming Hypothesis \ref{stu}, the Asai $L$-function associated to a representation $\Pi$ has analytic continuation to the entire complex plane with at most a simple pole at $s=1.$ If the Asai L-function has a  simple pole, then there exists a representation $\pi_D$ over $\Q$ such that $\Pi=BC_{\K/\Q}(\pi_D).$

\end{cor}

\section{Relation to other papers using the trace formula for base change}

There have been several papers using the trace formula to prove quadratic base change. These include \cite{La}, \cite{Sa}, and \cite{Y}. The first two references \cite{La}, \cite{Sa} proved base change by comparing a trace formula over the ground field (in our case \Q) with a certain ``twisted" trace formula over the quadratic field. The comparison is made through a matching of certain test functions used for each trace formula. The last reference \cite{Y} is the most similar to our approach as it uses the relative trace formula. The Kuznetsov trace formula, which we use, is a special case of the relative trace formula (see \cite{KL} for a derivation). Here also there is still a comparison of trace formulas involved with Ye's thesis \cite{Y}. Our work sheds some light on this paper. Naively, the comparison and main theorem made in \cite{Y} (up to some normalizations and written for the quadratic field $\K/\Q$) is for matching $f' $ and $f$ in the Hecke algebras of $\K$ and $\Q$ respectively,  \begin{multline}\label{eq:yeye}
\sum_{\Pi} \sum_{\phi_{\Pi}} \int_{\K\setminus \mathbb{A}_{\K}} R(f')\phi_{\Pi}\left(  \begin{array}{cc}
1& x  \\
0 & 1 \\ \end{array} \right)\psi'(-x)dx \int_{Z(\mathbb{A}_{\Q})GL_2(\Q)\setminus GL_2(\mathbb{A}_{\Q})}\overline{\phi_{\Pi}}(g)dg =\\ \sum_{\pi_D} \sum_{\phi_{\pi_{D}}}  \int_{\Q\setminus \mathbb{A}_{\Q}} R(f)\phi_{\pi_D}\left( \begin{array}{cc}
1& y  \\
0 & 1 \\ \end{array} \right)\psi(y)dy \int_{\Q\setminus \mathbb{A}_{\Q}} \overline{\phi_{\pi_D}} \left(  \begin{array}{cc}
1& z  \\
0 & 1 \\ \end{array} \right)\psi(z)dz,
\end{multline}
where the operator $R(f')$ (respectively $R(f)$) is the standard convolution operator for the trace formula over $\K$ (respectively $\Q$). Also, $\phi_{\Pi}$ is the spherical vector for each $\Pi$ ($\phi_{\pi_D}$ is the spherical vector for $\pi_D$). %(respectively $\phi_{\pi}$ for $\pi$) is an orthogonal basis 
and $\psi$ is an additive character for $\Q \setminus \mathbb{A}$ while $\psi'$ is $\psi$ composed with the trace function associated to $\K.$ Following \cite{F}, $$Res_{s=1} L(s,\Pi, Asai)=\int_{Z(\mathbb{A}_{\Q})GL_2(\Q)\setminus GL_2(\mathbb{A}_{\Q})}\phi_{\Pi}(g)dg$$ using the adelic integral representation of the Asai L-function. Expressing the Whittaker function as a Fourier coefficient (again up to some normalization factors) and choosing $f'$ such that at nonarchimedean places it is the unit of the Hecke algebra and denoting $h_{f'}$ as the Selberg transform of $f'$ we have $$R(f')\phi_{\Pi}=\otimes_{\nu} R(f'_{\nu})\phi_{\Pi}=h_{f'}(\nu_{\Pi}) \phi_{\Pi}.$$ These calculations can be made very explicit by using \cite{KL1}. We can then write the left hand side of \eqref{eq:yeye} as \begin{equation}
\label{eq:yeye1} \sum_{\Pi} h_{f'}(\nu_{\Pi}) \overline{c_l(\Pi)} Res_{s=1} L(s,\Pi, \text{Asai}).\end{equation} This can be considered the left hand side of Theorem \ref{theo}. Therefore, the starting point of Ye's paper is looking exactly at the residual term at $s=1$ of the current paper. The right hand side of \eqref{eq:yeye} is a standard Kuznetsov trace formula and equals via matching of functions $f'$ and $f$ the left hand side of \eqref{eq:yeye}. This is exactly what the main term of the right hand side of Theorem \ref{theo} is. So indirectly we are proving some kind of matching of Hecke algebras in our beyond endoscopy approach as well as the distinguishing of representations which are base change, and the analytic continuation of the Asai L-function. We hope that beyond endoscopy can be used in other cases to get such a comprehensive treatment of L-functions.

One can see from the connection with \cite{Y} an aspect of beyond endoscopy is to start with a trace formula with test function $f'$ and an extra averaging over spectral data (Fourier coefficients, Whittaker functions,..)\ weighted by a complex parameter $s,$ and rearrange the geometric side of the trace formula to clearly see the analytic continuation of the spectral side of the trace formula. Then we can realize what the expected matching of test functions is with the other trace formula and its test function $f.$ 

It is not clear what is the systematic way of rearranging the geometric side of the trace so that we see the analytic continuation, or, for that matter, just the analytic structure even near the pole. The analysis near the pole will give us our main term for \eqref{eq:mellga}. The hardest part of this paper on the Asai L-function is understanding how to get such a main term. 

The beyond endoscopy idea is in its infancy and trying to find a general way of getting a main term for a general L-function $L(s, \pi,\rho)$ is a difficult and important goal; so for now we are happy with understanding explicit examples.

The way we find the main term is also not used in previous trace formula comparison papers. The paper depends heavily on analytic number theory. With these analytic techniques, one literally ``builds" the geometric sum of a trace formula over $\Q,$ and then connects it to its associated spectral sum.  %In the sense of getting analytic information on $L$-functions using a trace formula, the author is reminded of the beautiful paper of Jacquet and Zagier \cite{JZ}, which gets analytic continuation of the symmetric square $L$-function associated to a representation of $GL_2.$

\section{Details of Paper}\label{det}

To the specifics of the paper, in Section \ref{sec:prelim}, we introduce the Kuznetsov trace formula stated in \cite{BMP1}. They state it for a general real number field, we only use it for a quadratic extension of $\Q.$ In Section \ref{sec:haq}, we state very explicitly our main theorem. In Section \ref{sec:exq}, to check that our calculations are correct we describe what is the residue of the Asai $L$-function for a Hilbert modular form.
In Section \ref{sec:ntl}, we prove a crucial bijection between solutions of two different sets of equations. This bijection gets rid of the difficult to handle $e(\frac{\overline{x}}{c})$ when one ``opens" the Kloosterman sum on the geometric side of the trace formula.  In Section \ref{sec:howto} and Section \ref{sec:qann} we implement the bijection into the trace formula and apply analytic number theoretic to get a main term plus a negligible remainder term. Let us explicate these sections more as it is key to the paper. 

Let $V_1, V_2 \in C^{\infty}_0(\R^{+}).$ Using the Kuznetsov formula  on the left hand side of Theorem \ref{hth} (now including the archimedean test functions)  gives
\begin{multline} \frac{1}{X} \sum_{\mm,n\in \Z}
g(\mm^2 n/X) \bigg(\sum_{\Pi \neq \mathbf{1}}
h(V,\nu_{\Pi})c_{n}(\Pi)\overline{c_{l}(\Pi)}+
\{CSC^{\K}_{n,l}\}\bigg) =\\ \frac{1}{X} \sum_{\mm,n\in \Z}
g(\frac{\mm^2 n}{X})  \sum_{c \in \ri, c \neq 0}
\frac{1}{\mathbb{N}(c)} S(n , l,c)V_1(\frac{4\pi
\sqrt{n l}}{c})V_2(\frac{4\pi
\sqrt{n l'}}{c'}).\end{multline}

We now break up the Kloosterman sums and gather all the $n$-terms to get

\begin{equation} \frac{1}{X}  \sum_{\mm}  \sum_{c \in \ri, c \neq 0}
\frac{1}{\mathbb{N}(c)}\sum_{x  (c)^{*}}
  e(\frac{\overline{ x}l}{\delta c}+\frac{\overline{ x'}l'}{\delta' c'})\end{equation} $$\left\{
  \sum_{n\in \Z}
 e(n(\frac{ x}{\delta c}+\frac{ x'}{\delta' c'}))g(\frac{\mm^2 n}{X})
 V_1(\frac{4\pi
\sqrt{nl}}{c})  V_2(\frac{4\pi
\sqrt{nl'}}{c'})\right\},
$$
where $\overline{x}$ is the multiplicative inverse of $x(c).$

Since the term in brackets is smooth, we can and do apply Poisson to the $n$-sum to get 
\begin{equation}\label{eq:LLL00}  \frac{1}{X} \sum_{\mm}  \sum_{c \in \ri, c \neq 0} \frac{1}{\mathbb{N}(c)}\sum_{x  (c)^{*}}
  e(\frac{\overline{ x}l}{\delta c}+\frac{\overline{ x'}l'}{\delta' c'})\end{equation}
  $$\left\{
 \sum_m
 \int_{-\infty}^{\infty} e(t(\frac{x \delta' c' + x'  c \delta-\mathbf{N}(\delta c)m}{\mathbf{N}(\delta c )}) )g(\frac{\mm^2t}{X})
 V_1(\frac{4\pi
\sqrt{tl}}{c})  V_2(\frac{4\pi
\sqrt{tl'}}{c'})dt\right\}.
$$

We now make a change of variables $t \to Xt$ to get \begin{equation}\label{eq:LLL}  \sum_{\mm}  \sum_{c \in \ri, c \neq 0}
\frac{1}{\mathbb{N}(c)}\sum_{x  (c)^{*}}
  e(\frac{\overline{ x}l}{\delta c}+\frac{\overline{ x'}l'}{\delta' c'})\end{equation}
  $$\left\{
 \sum_m
 \int_{-\infty}^{\infty} e(Xt(\frac{x \delta' c' + x'  c \delta-\mathbf{N}(\delta c)m}{\mathbf{N}(\delta c )}) )g(\mm^2t)
 V_1(\frac{4\pi
\sqrt{Xtl}}{c})  V_2(\frac{4\pi
\sqrt{Xtl'}}{c'})dt\right\}.
$$

As we have fixed $l,$ let 
$$   I_{\mm}(n,c,X): = \int_{-\infty}^{\infty}
e(\frac{Xtn}{\mathbf{N}(\delta c)})g(\mm^2 t)
 V_1(\frac{4\pi
\sqrt{Xtl}}{c})  V_2(\frac{4\pi \sqrt{Xtl'}}{c'})dt.
$$

Then our starting sum is equal to 
\[  \sum_{\mm}
 \sum_{c \in \ri, c \neq 0}
\frac{1}{\mathbb{N}(c)}\sum_{x  (c)^{*}}
e(\frac{\overline{x}l}{\delta c}+\frac{\overline{x'}l'}{\delta' c'}) \sum_{m \in
\mathbf{Z}}
 I_{\mm}(x \delta' c' + x'  c \delta-\mathbf{N}(c \delta)m ,c,X).
\]
 
The goal here is to remove terms $e(\frac{\overline{x}l}{\delta c}+\frac{\overline{x'}l'}{\delta' c'}).$ Normally, in encountering Kloosterman sums the best resort is to use Weil's bound, however that would not suffice here.

 Let $X'(c,n)$ be the set of solutions $(x,m)$ of
the equation
$$
 \delta' c'x +   \delta c x' -\mathbf{N}(\delta c)m = n \in \Z,
$$
where $x$ range over a fixed set of representatives of
$(O_{\mathbf{K}}/cO_{\mathbf{K}})^*$ and $m \in \Z.$ 

Then the starting geometric sum equals \[
  \sum_{\mm} \sum_{n\in\mathbf{Z}}   \sum_{c \in \ri, c \neq 0}
\frac{1}{\mathbb{N}(c)}\sum_{x\in X'(c,n)}
 e(\frac{\overline{x}l}{\delta c}+\frac{\overline{x'}l'}{\delta' c'}) I_{\mm}(n,c,X).
\]
Finally, let
\begin{equation} A_{n,X}:=  \sum_{\mm}  \sum_{c \in \ri, c \neq 0}
\frac{1}{\mathbb{N}(c)}\sum_{x\in X'(c,n)}
 e(\frac{\overline{x}l}{\delta c}+\frac{\overline{x'}l'}{\delta' c'}) I_{\mm}(n,c,X);
\end{equation}
then by an interchange of sums we can write the starting sum as
 \begin{equation} \label{eq:an}  \sum_{n \in \mathbf{Z}} A_{n,X}.\end{equation}

 Now for $n \neq 0,$ we can use a bijection of Section \ref{sec:ntl} to rewrite $\overline{x} \mod(c)$ with $x \in X'(c,n)$ in terms another parameter $r \mod (n), rr' \equiv 1 \mod (n)$ namely $$\overline{x} \equiv \frac{cr+c'}{n} \mod (c).$$  This gives $$e(\frac{\overline{x}l}{\delta c}+\frac{\overline{x'}l'}{\delta' c'})=e(\frac{rl}{n}+\frac{rl'}{n})e(\frac{c'l}{\delta c}+ \frac{cl'}{\delta' c'}).$$  We are avoiding some details in that the bijection puts arithmetic restrictions on the $c$-sum and $r.$ Let us label these arithmetic restrictions on the $c$-sum as $c \in Z(r,n).$ Then it remains to investigate the sum

\[
 \sum_{n \in \mathbf{Z}} A_{n,X}= \sum_{\mm} \sum_{\substack{r\in O_{\mathbf{K}}/(\frac{n}{\delta})\\rr'\equiv1(n)}} e(\frac{ r l+ r' l'}{n})
 \sum_{\substack{ c \in \ri, c \neq 0 \\ r\in
Z(r,n)}} \frac{1}{\mathbb{N}(c)}
 e(\frac{-1}{n}(\frac{l c'}{ c }+\frac{l' c}{ c' })) I_{\mm}(n,c,X)
 \]

So the geometric side of the trace formula started with a hard to analyze ``opened" Kloosterman sum and was traded in for a $c$-sum with certain arithmetic conditions. Using standard analytic number techniques (Poisson summation, Mobius inversion) the $c$-sum can be replaced with a main term of an integral times some arithmetic volume factor plus a remainder term that is negligible. Negligible in the sense of the error terms seen in\eqref{eq:BE3}, but on the geometric side of the trace formula. The $\mm$-sum comes into play on the geometric side of the trace formula in the Poisson summation on the $c$-sum. Geometrically, this allows us to go from the error term $O(X^{-\delta}),$ for some $\delta>0$ in \eqref{eq:BE3} to $O(X^{-M}),$ for any positive integer $M>0$ as in \eqref{eq:BE4}.

 In Section \ref{sec:zag} --assuming the Poisson dual of the above $c$-sum is independent of $r$-- we use a formula of Zagier \cite{Z} for the $r$-sum. This formula, in its simplest form is for $n \equiv 0(D),$
$$ 
 \sum_{\substack{r\in O_{\mathbf{K}}/(\frac{n}{\sqrt{D}})\\rr'\equiv1(n)}} e(\frac{ r + r' }{n})=  S_{D}(1,1,n).$$ Here $S_{D}(1,1,n)=\sum_{a(n)^{*}}\chi_D(a)e(\frac{x+\overline{x}}{n}).$
This is our ``bridge" to the trace formula over $\Q.$ This ``bridge"  is analogous to a local matching of orbital integrals  as in \cite{La} in the case of comparison of a twisted trace formula over $\mathbf{K}$ and standard trace formula $\Q.$ Not surprisingly, this identity also shows up in \cite{Y} as again the calculations in that paper start with the spectral sum $$\sum_{\Pi} h_{f'}(t_{\Pi})Res_{s=1} L(s,\Pi, Asai)$$ following the analysis after the equation \eqref{eq:yeye}. In Section \ref{sec:cssc}, we deal with the continuous spectrum over $\mathbf{K}.$ In Section \ref{sec:putti} we realize that the computation done in Section 9 can be written in terms of the geometric side of the trace formula over $\Q$. This requires an important theorem on the convolution of Bessel transforms in \cite{H}.  In order to get an equality or comparison of just cusp forms from $\mathbf{K}$ to $\Q,$ we compare Fourier coefficients for the continuous spectrum over the 2 different fields. Then the continuous spectrum can be removed from both sides. Lastly, in Section \ref{sec:hal}, we exploit the Hecke algebra associated with the problem, and match associated forms assuming Hypothesis \ref{stu}. As well in this section, we show the analytic continuation of the Asai $L$-function. 

\vspace{6mm}

{\bf Acknowledgements.} I would like to thank my advisor Jonathan Rogawski for proposing the idea of the studying the Asai $L$-function. I would like to also acknowledge the very useful conversations with Brian Conrey and Akshay Venkatesh. I like to also thank Jayce Getz for pointing out an error in an initial draft of this paper.

\section{Preliminaries}\label{sec:prelim}

	Let
$\mathbf{K}=\Q(\sqrt{D}),$ with rational prime discriminant $D.$ We assume $\mathbf{K}$ is of class number one. The ring of integers
will be denoted $\mathcal{O_{\mathbf{K}}}.$ Here the discriminant
$D_{\mathbf{K}}=D,$ and the different is generated by $\delta =
\sqrt{D}.$ Likewise, define the absolute norm of an ideal $c$ as
$\mathbb{N}(c),$ and the norm of an element $z\in
\mathcal{O_{\mathbf{K}}}$ as $\mathbf{N}(z).$ We denote the
non-trivial automorphism in this field by $x \rightarrow x'.$ We
use the standard notation for the exponential $e(\T(x)) := \exp(2\pi
i(\frac{x}{\delta} +\frac{x'}{\delta'})).$

   \vspace{.1in}
    A bit of terminology is needed before we can define the Kuznetsov trace formula. We closely follow \cite{BMP1} . Consider the algebraic group $\GG=R_{\mathbf{K}/\QQ}$(SL$_2)$ over~$\QQ$ obtained by restriction of scalars applied to SL$_2$ over~$\mathbf{K}$. We have
\begin{equation} \label{Gdef}
G:= \GG_\RR \cong SL_2(\RR)^2,  \qquad \GG_\QQ \cong \{(x,x') : x\in SL_2(\mathbf{K}) \},
\end{equation}
$G$ contains $K:=$ SO$_2(\RR)^2$ as a maximal compact
subgroup.

 \vspace{.1in}
The image of $SL_2(\Okr)\subset SL_2(\mathbf{K})$ corresponds to $\GG_\Z$.
This is a discrete subgroup of $\GG_\R$ with finite covolume. It is
called the {\sl Hilbert modular group}. We label it $\G.$

 \vspace{.1in}

 \subsubsection{Functions of product type}

The test functions on $G$ that we use are of product type: $f(g)=f_1(g_1)f(g_2)$ for $g=(g_1,g_2) \in G$ with $f_j$ a complex valued function on $SL_2(\R).$

\subsubsection{Subgroups of $G$} For $y \in \R^{+,2}$ we put $$a[y]:=\left( \begin{pmatrix} \sqrt{y_1} & 0 \\ 0 & \frac{1}{\sqrt{y_1}}\end{pmatrix},\begin{pmatrix} \sqrt{y_2} & 0 \\ 0 & \frac{1}{\sqrt{y_2}}\end{pmatrix}\right).$$ This is the identity component of a maximal $\R$-split torus in $G$ which we label $A.$ We normalize the Haar measure of $A$ by $da=\frac{dy_1}{y_1}\frac{dy_2}{y_2}.$

For $x \in \R^2,$ we let $$n[x]:=\left(\begin{pmatrix}1 & x_1 \\ 0& 1 \end{pmatrix},\begin{pmatrix}1 & x_2 \\ 0& 1 \end{pmatrix}\right) \in G.$$ The normalization of the Haar measure for $N:=\{n[x]:x \in \R^2\}$ is $dx_1,dx_2.$

For $u \in \R^{*,2}$ we define $$b[u]:=\left( \begin{pmatrix}u_1 & 0 \\ 0& \frac{1}{u_1}\end{pmatrix},\begin{pmatrix}u_2 & 0 \\ 0& \frac{1}{u_2}\end{pmatrix}\right).$$ 

For $\theta \in \R^{*,2}$ we define $$k[\theta]:=\left( \begin{pmatrix}\cos \theta_1 & \sin \theta_1 \\ -\sin \theta_1 & \cos \theta_1 \end{pmatrix}, \begin{pmatrix}\cos \theta_2 & \sin \theta_2 \\ -\sin \theta_2 & \cos \theta_2 \end{pmatrix}\right) \in K=SO(2) \times SO(2).$$ We normalize the Haar measure for $K$ by $dk:=\frac{d \theta_1}{2\pi}\frac{d \theta_2}{2\pi}$ for $k=k[\theta] \in K.$

Let $M$ be the subgroup $\{b[u]:|u_j|=1\}$ of $K.$ We then have the Iwasawa decomposition $G=NAK$ with standard Parabolic subgroup $P:=NAM.$

 \subsubsection{Characters} \label{sec:chcc}

We put $t'=\{r \in \F: \T(rx) \in \Z \mbox{ for all } x \in \ri \}.$
 All characters of $N$ are $\chi_r:n[x]^{-1} \to e^{2\pi i \T(rx)}$ for $r \in \R.$ All characters of $N $ are obtained by taking $r \in t'.$  
 
All characters of $K$ are of the form $\delta_q: k[\theta]  \to e(\T(q \theta))$ with $q \in (2\Z)^2.$  A function on $G$ has weight $q$ if it transforms on the right according to this character.  
\subsection{Automorphic forms over \ri}\label{sec:asec}

\begin{definition}
Let $q \in (2\Z)^2$ and $\lambda \in \C^2.$ An automorphic form for $SL_2(\ri)$ is a function on $f \in C^{\infty}(SL_2(\R)^2)$ satisfying

\begin{itemize}
\item $$f(\gamma gk) =  f(g) \delta_q(k) \qquad \ec{for all } \gamma \in SL_2(\ri), k \in K.$$
\item $$C_j f = \lambda_j f \mbox{ for } j=1,2.$$
\end{itemize}

We say $q=(q_1,q_2)$ is the weight of $f$ and $\lambda=(\lambda_1,\lambda_2)$ the eigenvalue of $f.$
\end{definition}

 We write $\lambda_j=\frac{1}{4}-\nu_j^2$ with $\Re \nu_j \geq 0$ and call $\nu=(\nu_1,\nu_2)\in \C^2$ the spectral parameter of $f.$

\subsection{Fourier Expansion of the discrete spectrum}
Let $q \in (2\Z)^2,\nu \in \C^2,$ and $r \in t'.$ Define $$W_q^{r,\nu}(nga[y]k):=\chi_r(n)\delta_q(k) \prod_{j=1}^2 W_{\mbox{sign}(r_j)q_j/2,\nu_j}(4\pi y_j|
r_j|),$$ with $W_{k,m}$ the Whittaker function. For example, the Whittaker function $$W^{r,\nu}_0(n[x]ga[y]k)=e( \T(rx)) \prod_{j=1}^2 W_{0,\nu_j}(4 \pi |y_j|)=e( \T(rx)) \prod_{j=1}^2  \sqrt{y_j} K_{\nu_j-1/2}(4 \pi |y_j|)$$  with $K_{s}(x)$ the standard K-Bessel function.

The Fourier expansion of the 
automorphic form $f$ of weight $q$ and spectral parameter $\nu_f$ is of the form \begin{equation}\label{eq:fouo}f(g)=F_{0,\sigma}f(a[g])\delta_q(k)+\sum_{r \in t'} a_{r}(f)d_{r,\sigma}(\nu_f)W^{r,\nu_f}_q(g).\end{equation}

\subsubsection{Eisenstein Series}
We now describe $L_c^2\left( \Gamma \backslash G\right).$ For each $q \in (2\Z)^2,$ there is an Eisenstein series $$E(P,\nu, i\mu,g):=\sum_{\gamma \in \Gamma_{P}\slash \Gamma} a_[\gamma g]^{\rho+2\nu\rho+i\mu}\delta_q(k(\gamma g)).$$ Here $\nu \in \C,$ and $\mu$ is an element of a lattice in the hyperplane $\Re(x)=0, x \in \R^2.$ In particular, $\mu$ is defined by $a[\gamma]^{\mu}=1$ for $\gamma \in \Gamma_{P},$ again using similar notation to \cite{BMP1}. The series converges for $\nu > \frac{1}{2},$ and has meromorphic continuation in $\nu$ with Laplaican eigenvalue $\bigg(\frac{1}{4}- (\nu+\mu_1)^2,\frac{1}{4}- (\nu+\mu_2)^2\bigg).$

 Let $L^2(\G\backslash G,q)$ denote the Hilbert space of (classes of) functions that are left invariant under $\G$ and transform according to $\delta_q:k[\theta]=\begin{pmatrix} \cos \theta & \sin \theta \\ -\sin \theta & \cos \theta \end{pmatrix} \to \exp(i \T(q \theta))$ with $q \in (2\Z)^2$ , and square
integrable on $\G\backslash G$ for the measure induced by the Haar
measure. This Hilbert space is a direct sum of orthogonal subspaces $L^2_c(\G\backslash G,q)$ and $L^2_d(\G\backslash G,q).$  The subspace $L^2_d(\G\backslash G,q)$ has a countable orthonormal basis consisting of square integrable automorphic forms. The orthogonal subspace
$L^2_c(\G\backslash G,q)$ generated by integrals of unitary Eisenstein
series. 

\subsection{Representations associated to Automorphic forms over $\ri$}\label{sec:asecdd}

Let $L^2(\G \backslash G)^+$ denote the closure of $\sum_{q \in 2\Z} L^2(\G\backslash G,q)$ and similarly for $L^2_c(\G\backslash G)^+.$ This latter space is invariant under the action by $G$ by right translation. The orthogonal complement of $L^2_c(\G\backslash G)^+$ in   $L^2(\G\backslash G)^+$ denoted $L^2_d(\G \backslash G)^+$ is the closure of $\sum_{\irr} V_\irr$,
where $\Pi$ runs through an orthogonal family of closed
irreducible subspaces for the $G$-action. Each representation $\irr$
has the form $\irr=  \irr_1 \otimes \irr_2$, with $\irr_j$ an even unitary
irreducible representation of~SL$_2(\R)$. Table 1 of \cite{BMP1} lists
the possible isomorphism classes for each~$\irr_j$. For each~$\irr$
we define a spectral parameter $\n_\irr =
\left(\n_{\irr,1},\n_{\irr,2}\right)$, with $\n_{\irr,j}$ as
in the last column of the corresponding  table. The Casimir operators $C_j$
act on each coordinate for $1\le j \le 2$. The
eigenvalue~$\ld_\irr\in \R^2$ is given by $\ld_{\irr,j} =
\frac{1}{4}-\nu_{\irr,j}^2$. We note that if $\irr_j$ lies in the
complementary series, $\ld_{\irr,j} \in (0,\frac 14)$ and if $\irr_j$
is isomorphic to a discrete series representation $D^{\pm}_b$, $b \in
2\Z$, $b\ge 2$, then $\ld_{\irr,j}=\frac b2 (1- \frac b2)\in \Z_{
\le 0}$.

The constant functions give rise to $\irr = \bf{1} \isdef \otimes_j 1$.
It occurs with multiplicity one. If $V_\irr$ does not consist of the
constant functions, then $\irr_j\neq 1$ for all~$j$.

For each $\Pi$ the subspace $V_{\Pi, q}$ of weight $q$ has dimension at most $1.$ We choose bases $\mathcal{M}_q$ by taking an orthonormal system $\{\psi_{\Pi,q}\}$ with  $\psi_{\Pi,q} \in V_{\Pi, q}.$

\subsubsection{Fourier coefficient normalization for the trace formula}\label{sec:normy}
The Fourier coefficients of order $r$ are essentially a property of $\Pi$ and not of an individual automorphic form in $V_{\Pi}.$ To introduce the normalized Fourier coefficients needed in the trace formula let  $$d_{r}(q,\nu):=
\prod_{j=1}^2 d_{r}(q_j,\nu_j)$$ with 
$$d_{r}(q_j,\nu_j)=\frac{(-1)^{q_j/2}(2\pi{|r_j|})^{-1/2}}{\Gamma(\frac{1}{2}+\nu_j+\frac{q_j}{2}\mbox{sign}(r_j))}.$$ Applying the Maass operators discussed in \cite{BMP1} we can write the Fourier coefficients in \eqref{eq:fouo} for an automorphic form $\psi_{\Pi,q}$ as $$ a_r(\psi_{\Pi,q})=c_r(\Pi) d_{r}(q,\nu).$$ This determines $c_r(\Pi)$ independently of $q.$ The same procedure can be applied to the Fourier coefficients of the Eisenstein series getting  $$ a_r(E_q(P,\nu,i\mu))=D_r(\nu,i\mu) d_{r}(q,\nu+i\mu).$$

%Let us choose a complete orthonormal basis $\{f_l\}_{l \geq 0}$ of the space of $K$-finite vectors in  the discrete spectrum, call it $L_d^2\left( \Gamma \backslash G,\theta \kappa\right),$ of $L^2\left( \Gamma \backslash G,\theta \kappa\right)$ such that each $f_l$ generates one of the irreducible $V$ under the action of $G.$ The orthogonal complement of $L_d^2\left( \Gamma \backslash G,\theta \kappa\right)$ in $L^2\left( \Gamma \backslash G,\theta \kappa\right),$ call it $L_c^2\left( \Gamma \backslash G,\theta \kappa\right),$ is described by integrals of Eisenstein series.  

%As $G\slash K=SL_2(\C)\slash SU_2(\C) \times SL_2(\C)\slash SU_2(\C),$ each $f_l \in L_d^2\left( \Gamma \backslash G,\theta \kappa\right)$ is an eigenvector of the Laplace operators $L_1,L_2.$ We standardly normalize the Laplace operator to be $L_jf_l=\lambda_{l,j} f_l,$ with $\lambda_{l,j}=1-\mu_{l,j}^2, \mu_{l,j} \in \left(i[0,\infty) \cup (0,1]\right).$ Forms with $\mu_{l,j} \in (0,1]$  for any $j$ are said to have {\it exceptional spectral parameter.}

\subsection{Kuznetsov trace formula over $\K$}\label{sec:tree}

 Let  $V =V_1 \times V_2$ be a test function  in $C_0^{\infty}(\R^{+})^2.$  The transforms associated to the archimedean parameter $\nu_{\Pi}$ are defined as \begin{equation}
 h(V_j,\nu_{\Pi_j})
 = \left\{ \begin{array}{ll}
         i^{k}\int_0^\infty V_j(x)J_{\nu_{\Pi_j}-1}(x)x^{-1}dx & \text{if } \nu_{\Pi_j} \in  2 \Z;\\
         \int_0^\infty
V_j(x)B_{2{\nu_{\Pi_j}}}(x)x^{-1}dx & \text{if } \nu_{\Pi_j}
\in i\R.\end{array} \right.  \end{equation}  Here, $B_{2it}(x) = (
\text{2 sin}(\pi it))^{-1}(J_{-2it}(x) - J_{2it}(x)),$ where
$J_\mu(x)$ is the standard $J$-Bessel function of index $\mu.$ Let the
Kloosterman sum be defined as \begin{equation} S(r,s,c):=\sum_{x \in
(\mathcal{O_{\mathbf{K}}}/ c \mathcal{O_{\mathbf{K}}})^{*}}
e(\T(\frac{r \overline{x}+s x}{c})),\end{equation} where
$x\overline{x} \equiv 1 (c).$ 
    
    The Kuznetsov trace formula then is \begin{equation}\sum_{\Pi \neq \mathbf{1}}
h(V,\nu_{\Pi})c_{\mu}(\Pi)\overline{c_{\nu}(\Pi)}+
\{CSC^{\K}_{\mu,\nu}\} = \end{equation} $$=
 \sum_{\substack{c \in \ri \\ c\neq 0}}^\infty
\frac{1}{\mathbb{N}(c)} S( \nu,\mu,c) V( \sqrt{\mu
\nu}
 /c),$$ where $\mu, \nu \in \mathcal{O_{\mathbf{K}}}.$  
The term $CSC^{\K}_{\mu,\nu}$ denotes the continuous spectrum contribution which depends on the parameters $\mu, \nu.$ It is quite lengthy to explain in detail, but we elaborate on it greatly in Section \ref{sec:cont}. Other descriptions and applications
of the Kuznetsov trace formula for number fields include
 \cite{BMP1}, \cite{BMP2}, \cite{KL}, and \cite{V}.
 
 \subsubsection{Bessel transform and its convolution}\label{sec:bccv}
 
 We now follow section 9 of \cite{H} closely. Define
\begin{equation} V_1*V_2(z):=  \int_{-\infty}^{\infty}
\int_{-\infty}^{\infty}
  e \left((\frac{z}{4\pi})(
 \frac{x}{y}+\frac{y}{x})\right) e \left( (\frac{1}{4 \pi z})\frac{1}{xy}\right)\times \end{equation} $$V(\frac{4\pi
}{x})  W(\frac{4\pi}{ y})\frac{dx}{x} \frac{dy}{y}.
$$
Theorem $3.1$ of \cite{H} states 
\begin{theorem}  For all $V, W \in C_0^{\infty}(\R^{+})$ , $h(V*W,t)=C_{t} h(V,t)h(W,t),$ where $C_{t}=2\pi$ for $t$ an even integer, and $C_{t}=\pi$ for $t$ purely imaginary.
\end{theorem}

 \subsection{Automorphic forms and associated representations over $\Q$}\label{sec:asecd}
 
 In this section we shall consider the discrete subgroup of $$ \Gamma_0(D)=\{\begin{pmatrix}a&b\\ c&d\end{pmatrix}| a,b,c,d \in \Z, c \equiv 0(D)\}.$$  Let $\theta$ be a Dirichlet character modulo $D.$ We consider $\theta$ also as a character of $\Gamma_0(D)$ by $$\theta(\begin{pmatrix}a&b\\ c&d\end{pmatrix})=\theta(d).$$ Let $\chi_D$ be the Dirichlet character associated to the quadratic field $\K$

Completely analogous to Section \ref{sec:asec} we can define $L^2\left( \Gamma_0(D) \backslash SL_2(\R),\theta \right)^+$ as the closure of $\sum_{q \in 2\Z} L^2_q\left( \Gamma_0(D) \backslash SL_2(\R),\theta \right)$ where $L^2_q\left( \Gamma_0(D) \backslash SL_2(\R),\theta \right)$ is the space of square-integrable functions on $SL_2(\R)$ which satisfy 

\begin{equation*}
f(\gamma gk) = \theta(\gamma) f(g)\delta_q(k) \qquad \ec{for all } \gamma \in \Gamma, k \in K.
\end{equation*} 
There is a similar action of $SL_2(\R)$ on $L^2_d \left( \Gamma_0(D) \backslash SL_2(\R) ,\theta\right)$  which decomposes this space into irreducible unitary representations with finite multiplicities. Specifically, $L^2_d(\Gamma_0(D) \backslash SL_2(\R), \theta)^+$ is the closure of $\sum_{\pi} V_{\pi}.$ Here $\pi$ runs through an orthogonal family of closed irreducible subspaces for the $SL_2(\R)$-action. Each representation $\pi.$
is an even unitary irreducible representation of $SL_2(\R).$ Denote the representations that are associated to $L^2_d(\Gamma_0(D) \backslash SL_2(\R), \chi_D)^+$ by $\sum_{\pi_D} V_{\pi_D}.$
 We have obvious normalizations of the Fourier coefficients for automorphic forms in representations $\pi_D.$ There is a completely analogous extension of definitions for the Eisenstein series over $\Q.$

%For irreducible automorphic representations over $\Q$ we have associated Hecke eigenforms defined by $\phi_{t,D}$ to be a Maass form with eigenvalue $\frac{1}{4}+t^2$ of level $D$ with nebentypus the quadratic character $\theta_D$ associated to the field $\mathbf{K}.$ Similarly, $\phi_{k,D}$ denotes a holomorphic form of weight $k$ with similar level and nebentypus. As in \cite{H}, for both forms we denote the $n$-th Fourier coefficient  by $a_{n}(\phi_{*,D}).$

\subsubsection{Maass constructed theta forms}\label{sec:theaa}

There are certain special automorphic forms that come from Hecke characters over a quadratic field. 
Let  $\mu_k=\frac{k\pi}{2\log \epsilon_0},k\neq 0 \in \Z$ where  $\epsilon_0$ is the fundamental unit of our field $\K.$ Define $\omega_{\mu_k}(x):=|\frac{x}{x'}|^{i\mu}$ and for $n \in \N$ let  $\Psi_{\mu_k}(n)=\sqrt{2\pi}n^{1/2}\Gamma(\frac{1}{2}+\mu_k)\sum_{\substack{q \in \ri, \N(q)=n}} \omega_{\mu_k}(q).$

%The $\mu$ sum parametrizes the theta forms. Remember from Section \ref{sec:cont}, $\mu=\frac{k\pi}{2\log \epsilon_0},k\neq 0 \in \Z.$ Each $k \neq 0$ corresponds to a cusp form constructed  from Hecke characters over the quadratic field $\K$. 

Then $$f_{\mu_k,q}(g):=\sum_{r=1}^\infty \frac{\Psi_{\mu_k}(r)}{r^{1/2}} d_{r}(q,\mu_k)W^{r,i\mu_k}(g)$$ is in $L^2_d \left( \Gamma \backslash SL_2(\R) ,\chi_D \right).$  Here $W^{r,\mu}(g)=\chi_r(n)\delta_q(k) W_{q/2,\mu}(4\pi y|r|)$ and  $d_{r}(q,\mu)$ is defined analogously to above. 

They also have associated irreducible unitary representations which we denote by $\pi_{D,\mu_k}$ for each $k \in \Z.$
%\sum_{\mathbb{N}(q)=w}\omega_{\mu}(q),$ with the corresponding form being $$\theta_{\omega_{\mu}}(z)=\sum_{n=1}^\infty \psi_{\mu}(n)\sqrt{y}K_{\frac{i\pi k}{2\log \epsilon_0}}(2\pi n y)e(nx).$$ Further $\theta_{\omega_{\mu}}  \in \Gamma_{0}(D,\chi_d).$
%For a more explicit explanation of these forms see \cite{Bu}. 
These are cuspidal representations that base change to Eisenstein series in $\K.$ So it is to be expected that if the Asai $L$-function is detecting the cuspidal automorphic forms over the quadratic field $\mathbf{K}$ that are base changes, they could not come from theta forms over $\Q.$ These are also the forms which give the poles of the symmetric square $L$-function in \cite{V}.

\subsubsection{Kuznetsov trace formula for representations $\pi_D$ over $\Q$}\label{sec:tree1}

  The Kuznetsov trace formula over $\Q$ for representations $\pi_D$ is for a test function $V \in C^\infty_0(\R^{+}),$ \begin{equation}\sum_{\pi_D \neq \mathbf{1}}
h(V,\nu_{\pi_D})c_{m}(\pi_D)\overline{c_{n}(\pi)}+
\{CSC^{\Q}_{m,n}\} = \end{equation} $$
 \sum_{\substack{c \in \N\\ c \equiv 0(D)}}
\frac{1}{c} S_{D}(m,n,c) V( 4\pi \sqrt{mn}/c),$$ where $m,n \in \N.$   
 The transforms associated to the archimedean parameter $\nu_{\Pi}$ are\begin{equation}
 h(V,\nu_{\pi_D})
 = \left\{ \begin{array}{ll}
         i^{k}\int_0^\infty V(x)J_{\nu_{\pi_D}-1}(x)x^{-1}dx & \text{if } \nu_{\pi_D} \in  2 \Z;\\
         \int_0^\infty
V(x)B_{2{\nu_{\pi_D}}}(x)x^{-1}dx & \text{if } \nu_{\pi_D}
\in i\R.\end{array} \right.  \end{equation}  Here, $B_{2it}(x) = (
\text{2 sin}(\pi it))^{-1}(J_{-2it}(x) - J_{2it}(x)),$ where
$J_\mu(x)$ is the standard $J$-Bessel function of index $\mu.$ The
Kloosterman sum \begin{equation} S_{D}(m,n,c):=\sum_{x (c)^{*}}
\chi_D(x)e(\frac{mx+n\overline{x}}{c}),\end{equation} where
$x\overline{x} \equiv 1 (c).$ The term $CSC^{\Q}_{m,n}$ denotes the continuous spectrum contribution is explained in Section \ref{sec:cont}. 

\section{Explicit main result}\label{sec:haq}

 With all the notation defined, we can finally state the theorem we prove.
 
  Our main result is: 

\begin{theorem}\label{theo} Let $V=V_1 \times V_2 \in C_0^\infty(\R^{+})^2$ and $g \in C_0^\infty(\R^{+})$ such that $\int_0^\infty g(x)dx=1.$ Let $h(V,\nu)$ be the Bessel transform of $V$ defined as in Section \ref{sec:tree}. Let $V_1*V_2$ denote the convolution of the Bessel transform from Section \ref{sec:bccv}.

Let $\Pi$ denote the automorphic representations defined over $\ri$ as in Section \ref{sec:asec} with spectral parameter $\nu_{\Pi},$ and $\pi_D$ the automorphic representations defined over $\Q$ with spectral parameter $\nu_{\pi_D}$ as in Section \ref{sec:asecd}. 

For any positive integer $M \geq 0,$ and  $l \in O_{\mathbf{K}}$ with $(l,D)=1,$

\begin{enumerate}
\item{\{Cuspidal contribution\}}

\begin{multline*}\frac{1}{X} \sum_{\mm} \sum_n g(\mm^2 n/X)\sum_{\Pi \neq \mathbf{1}}
h(V,\nu_{\Pi})c_{n}(\Pi)\overline{c_{l}(\Pi)} =   \\ 2\pi  \bigg(\sum_{\substack{r\in \N \\  r^2 | \N(l)}}   \sum_{\pi_{D} \neq \pi_{D,\mu_k}} h(V_1 *V_2,\nu_{\pi_{D}}){c_{\frac{ll'}{r^2}}(\pi_{D})}{\overline{c_1(\pi_{D})}}\bigg)+O(X^{-M}).\end{multline*}

\item{\{$CSC^{\K}:$=Continuous spectrum contribution for $\K$ \}}

%Let $$\mu_k=\frac{k\pi}{log\epsilon_0},$ where $\epsilon_0$ is the fundamental unit of $\K,$
% and $\tau_{it}(n)=\sum_{ab=n}\theta_D(a)(\frac{a}{b})^{it}, \psi_{\mu_k}(y):=\sum_{\mathbb{N}(q)=y}\omega_{\mu_k}(q),$ where $\omega_{\mu_k}(x)=|\frac{x}{x'}|^{i\mu_k},$ then 
\begin{multline*} \frac{1}{X} \sum_{\mm} \sum_{n} g(\mm^2 n/X) CSC^{\K}_{n,l}=\\ 2\pi  \bigg(\sum_{\substack{r\in \N \\  r^2 | \N(l)}}   \sum_{\pi_{D,\mu_k}} h(V_1*V_2,\nu_{\pi_{D,\mu}}){c_{\frac{ll'}{r^2}}(\pi_{D,\mu})}{\overline{c_1(\pi_{D,\mu})}}+\\  \sum_{\substack{r\in \N \\  r^2 | \N(l)}}\frac{1}{4\pi} \int_{-\infty}^{\infty} h(V_1*V_2,t)D_{\frac{ll'}{r^2}} (it,0) \overline{D_{1} (it,0)}dt\bigg)+ O(X^{-M}). \end{multline*}

\end{enumerate}
\end{theorem}

\section{Residue of the Asai $L$-function}\label{sec:exq}

%We answer the question of what is the residue of the Asai $L$-function for a lifted holomorphic Hilbert modular form $F$ of parallel weight $k$. The other spectral cases are similar. As stated earlier, the Asai $L$-function of $F$ has a pole at $s=1$ if the form is a base change $F=BC_{\Q(\sqrt{D})/\Q}(f)$ for a form $f$ of level $D$ and central character $\theta_D$. Following \cite{A},  

In this section we assume the theory of quadratic base change to show the residue of Theorem \ref{theo} is correct.

%Let $f(g)\sim F(w_1,w_2),$ $$\langle F, F\rangle_{SL_2(\Okr)} = \int_{SL_2(\Okr)\setminus  \mathbb{H}\times \mathbb{H} }|\N(y)|^k |F(w_1,w_2)|^2\prod_{i=1}^2 \frac{dx_idy_i}{y_i^2},$$ where $w_j=x_j+y_j i, j=1,2.$ Similarly, for $h \in S_0(\Gamma_0(D),\chi_d),$ we have the standard inner product $$\langle f,f\rangle_{\Gamma_0(D)}=\int_{\Gamma_0(D)\setminus \mathbb{H}} |f(z)|^2y^{-2}dxdy.$$ Then the residue of the Asai $L$-function is $$Res_{s=1} L(s,f,Asai)=\frac{(4\pi)^k}{\Gamma(k)}\frac{\langle F, F\rangle_{SL_2(\Okr)}}{\zeta(2) \langle f,f\rangle_{\Gamma_0(D)}}.$$

%Including that we take an orthonormal basis of $F$ and the Fourier coefficient normalization of \eqref{eq:dnorm}, we get \begin{equation}\label{eq:wasai}  \frac{1}{\langle F, F\rangle_{SL_2(\Okr)}}\text{Res}_{s=1} \sum_{n=1}^\infty  c_n(F)
%n^{-s}=\frac{12}{\pi}\frac{1}{\langle f,f\rangle_{\Gamma_0(D)}}.
%\end{equation}
 Let $\pi_D$ be an irreducible unitary representation in $L^2_0(\Gamma_0(D)\slash SL_2(\R),\chi_D)$ with $$h(g)=\sum_{r \in \ri} a_r(\pi_D)W_0^{r,\nu}(g) \in \pi_{D}$$ of weight zero. Assume $\pi_D$ is not equal to any representation $\pi_{D,\mu_k}$ from Section \ref{sec:theaa}.
We have $$h_{\rho}(g)=\sum_{r \in \ri} \chi_D(r)a_r(\pi_D)W_0^{r,\nu}(g)\in \pi_D \otimes \chi_D$$ also of weight zero. Assuming the theory of quadratic base change, the automorphic representation $\Pi=BC_{\K/\Q}(\pi_D)$ has a vector $f\in \Pi \subset L^2_0(SL_2(\ri)\slash SL_2(\R)^2)$ of weight zero.  The Asai decomposition for $f$ is 
% \int_{SL_2(\Okr) \slash SL_2(\R)^2} f(g) E(g,s)dg
 
 \begin{multline}\label{eq:wdw}   \int_{SL_2(\Okr) \slash SL_2(\R)^2} f(g) E(g,s)dg=\\ \int_{\Gamma_0(D)\slash SL_2(\R)} h(g) \overline{h_{\rho}(g)} E_{D}(g,s) dg \times \int_{\Gamma_0(D)\slash SL_2(\R)} |h(g)|^2 E_{D}(g,s) dg\end{multline} where $ E_{D}(g,s) \in L^2_c(\Gamma_0(D)\slash SL_2(\R),\chi_D)$ and is of weight zero.
 
 Let $$L(s,\Pi,Asai):=\sum_{n=1}^\infty \frac{c_n(\Pi)}{n^s}$$ where again $c_n(\Pi)$ are the trace formula normalized Fourier coefficients associated to the weight zero automorphic form $f \in \Pi.$ Transferring the classical language of Asai \cite{A}, \eqref{eq:wdw} equals $$\frac{\zeta(2s)G(s)L(s,\Pi,Asai)L(s,\chi_D)}{\zeta(s)} \int_{\Gamma_0(D)\slash SL_2(\R)} |h(g)|^2 E_{D}(g,s) dg$$ where $G(s)$ is a certain product of Gamma factors following a standard Rankin-Selberg unfolding of the integral $$\int_{\Gamma_0(D)\slash SL_2(\R)} h(g) \overline{h_{\rho}(g)} E_{D}(g,s) dg.$$
 
 As the left hand side of \eqref{eq:wdw} has a simple pole at $s=1$ with residue $2\pi L(1,\chi_D)||f||_2,$ the right hand side has a pole there as well with the same residue. The term $$ \frac{L(s,\chi_D) \int_{\Gamma_0(D)\slash SL_2(\R)} |h(g)|^2 E_{D}(g,s) dg }{\zeta(s)}$$ is analytic at $s=1$ and equals $$2\pi L(1,\chi_D)||f||_2\int_{\Gamma_0(D)\slash SL_2(\R)} |h(g)|^2 dg=2\pi L(1,\chi_D)  ||h||_{\Gamma_0(D)}$$ again by a standard Rankin-Selberg unfolding. Therefore, it must be the case that $\zeta(2s)G(s)L(s,\Pi,Asai)L(s,\chi_D)$ has a simple pole at $s=1.$ This implies, as $\zeta(2s), L(s,\chi_D),$ and $G(s)$ are analytic at $s=1,$ that $L(s,\Pi,Asai)$ has a simple pole. In other words, $$\mbox{Res}_{s=1} \zeta(2s)L(s,\Pi,Asai)=\frac{2\pi ||f||_2}{G(1)||h||_{\Gamma_0(D)}}=\frac{2\pi ||f||_2}{||h||_{\Gamma_0(D)}}.$$  Here we used as in standard Rankin-Selberg unfolding that $G(1)=1.$
% \Lambda(s,f \otimes \overline{f}=\Lambda(s,f,Asai) \times \Lambda(s,f \otimes \chi, Asai)=[\zeta

 %Using standard unfolding the Asai L-function can be written as $ \sum_{n=1}^\infty  a_n(\Pi)n^{-s}.$ However, we study the normalized Asai L-function $\sum_{n=1}^\infty  c_n(\Pi)n^{-s}$  coming from the normalized Fourier coefficients built into the Kuznetsov trace formula over $\K.$ Following Section \ref{sec:normy}, $$ \sum_{n=1}^\infty  c_n(\Pi)n^{-s}=\frac{2\pi }{\Gamma(\frac{1}{2}+\nu_{\Pi})\Gamma(\frac{1}{2}-\nu_{\Pi})}\sum_{n=1}^\infty  a_n(\Pi)n^{-s}.$$ 
 %As $f\in \Pi$ is a base change from $h \in \pi_{D},$ using the Fourier normalizations  built into the trace formula over $\Q$ we have $$2 \cos(\pi \nu)\sum_{n=1}^\infty  a_n(\Pi)n^{-s}=2 \cos(\pi \nu)\zeta(s)\sum_{n=1}^\infty  a_{n^2}(\pi_{D}) n^{-s}= \zeta(s)  \sum_{n=1}^\infty  c_{n^2}(\pi_{D}) n^{-s}.$$
From Section \ref{sec:asec} we assume for $f\in \Pi,$  $||f||_2=1.$  Taking into consideration we have an extra average over $\mm$ in the Theorem removing the $\zeta(2s)$ and the orthonormalization of $f$ we have  \begin{equation}\label{eq:wasai11}  \text{Res}_{s=1} \sum_{n=1}^\infty  c_n(\Pi)n^{-s}=\frac{2\pi ||f||_2 }{||h||_{\Gamma_0(D)}}=\frac{2\pi}{||h||_{\Gamma_0(D)}}.\end{equation} This is exactly what Theorem \ref{theo} is giving.
 %Note that this term $2\pi$ is $A(\pi)$ in \eqref{eq:BE4}. So the dependence on $\pi$ is only through the level $D,$ and one can see it agrees with the residue in the main theorem \ref{theo}.

\section{Number-theoretic lemmas}\label{sec:ntl}
In order to make some manipulations on the geometric side of the trace formula, we prove some number-theoretic lemmas. Specifically when we open the Kloosterman sum on the geometric side, we need to get rid of the term $e(\T(\frac{\overline{ x}l}{\delta c}))$ to exploit cancellation. We use these lemmas to facilitate doing that in the Section \ref{sec:howto}.  

\begin{definition}\label{xdef}
Let $X(c,n)$ denote the set $$\{x \in (O_{\mathbf{K}}\slash (c))^{*}: \frac{\T(\delta'cx)-n}{\N(\delta c)} \equiv 0 \pmod \Z\}.$$

%$$
%\delta'  c' x+\delta  c x' = n-m\mathbf{N}(\delta  c).
%$$
%Here we say that $x$ is equivalent to $y$ if $x\equiv
%y \pmod{ c}.$ Let $X(c,n)$ be a
%set of representatives for the classes in
%$\overline{X}(c, n)$.
\end{definition}

We now want to look at the set of solutions of the above definition for $n=0.$ 
%Let us first state a theorem from \cite{FT}.

%\begin{theorem}\label{ftt} [[FT], V.1.16]
%Let $\tau$ denote the non-trivial automorphism of $\K \backslash \Q,$ and let $\mathtt{a}$ denote an $\ri$-ideal with the property $\mathtt{a}^{\tau}=\mathtt{a},$ then $\mathtt{a}=r \mathtt{q},$ where $r$ is a positive rational and $\mathtt{q}$ is a square-free $\ri$-ideal which is divisible only by ramified primes of $\K \backslash \Q.$

%\end{theorem} 

\begin{prop}\label{n00}
Let $c \in \ri.$ Then $x \in  (O_{\mathbf{K}}\slash (c))^{*}$ is in $X(c,0)$ if and only if there exists a unit $\eta \in \ri$ such that $c'=\eta c$ and $x' \equiv \eta x (\delta' c').$

\end{prop}

\begin{proof}
Note in the case $n=0,$ Definition \ref{xdef} implies for $x \in X(c,0),$ $\delta'c' x+ \delta c x' = \N(\delta c)m$ for some $m \in \Z.$ Taking this equation modulo $c$ implies \begin{equation}\label{eq:dq} \delta' c' x \equiv 0(\delta c),\end{equation} which also implies $c' \equiv 0(c)$ as $(x,c)=1.$ Without loss of generality, we also get $c \equiv 0(c').$ Therefore, $c'=\eta c$ with $\eta$ a unit in $\ri.$ Plugging $c'=\eta c$ into \eqref{eq:dq} immediately implies $x' \equiv \eta x (\delta' c').$

Assume there exists a unit $\eta \in \ri$ such that $c'=\eta c$ and $x' \equiv \eta x (\delta' c').$ The latter congruence condition implies $x' - \eta x=\delta' c' t,$ for some $ t \in \ri.$  Multiplying by $\delta c$ and using $c'=\eta c$ gives $$\delta c x' +\delta' \eta c x =\delta c x' +\delta' c' x =\N(\delta c) t.$$ As the middle equation is in $\Z$ we have $t\in \Z.$ Therefore, $x \in X(c,0).$

\end{proof}

\begin{lemma}\label{nDa}
 $X(c,n)$ is empty unless $D|n.$
\end{lemma}
\begin{proof}
Let $c:=\alpha+\beta \sqrt{D}, x:=w+z\sqrt{D},$ then
\begin{equation} n=(-\sqrt{D})(\alpha-\beta
\sqrt{D})(w+z\sqrt{D})+(\sqrt{D})(\alpha+\beta
\sqrt{D})(w-z\sqrt{D})+\end{equation} $$Dm(\alpha^2-D
\beta^{2}).$$ This equals \begin{equation*} -2D(\alpha z -\beta w)
+ Dm(\alpha^2-2 \beta ^{2})\equiv 0(D).\end{equation*}

\end{proof}

\begin{prop}\label{dche}
Suppose $c \in \ri, (c,c')=d.$ Let $x \in X(c,n),$ then $d'=d.$

\end{prop}

\begin{proof}
We note Definition \ref{xdef} is equivalent to there existing an $x(c)^{*}$ such that $$\delta'c' x+ \delta c x' =n +m\N(\delta c), m \in \Z.$$ Suppose $d' \neq d,$ or $d=s \sqrt{D}, s \in \Z.$ By Lemma \ref{nDa}  $D|n$ but clearly also $d|n$ as $(c,c')=d.$ So $\sqrt{D}|n$ but as $n \in \Z, D^2|n.$ Writing $c=\sqrt{D}v,v \in \ri$ and dividing the above equality by $D^2$ we have  $$\frac{v' x+ v x'}{D} =\frac{n}{D^2} +m\N(v) \in \Z.$$ This implies $\T(v'x) \equiv 0(D),$ or writing $v=a+b\sqrt{D}, x=r+s\sqrt{D},$ $$2ar \equiv 0(D).$$
Suppose $(a,D)>1$ then $(v,v')>1,$ contradicting $(c,c')=d.$ If $(r,D)>1,$ then $(x,c)>1,$ again a contradiction. Hence, it must be the case that $d'=d.$
\end{proof}

%\begin{prop}\label{lem:n00}
%  $X(c,0)$ is empty unless $c=\gamma a$ or $\gamma b \sqrt{D}$ with $a,b \in \N,$ and $\gamma=(-1)^e \eta^l$ where $\eta$ is a fundamental unit of the field $\mathbf{K}$ with $e \in \{0,1\}, l \in \Z.$ 

%Take a $c$ as above, then for $x \in X(c,0)$ the following congruence conditions hold:
%\begin{enumerate}

%\item[if $c=\gamma a$] 
% we have $$\eta'^l \overline{x'}\equiv \eta^l \overline{x}(\delta a).$$ Also, there exists $k,h \in \Z$ with $2h\equiv 0(a)$ such that $$\eta^l\overline{x}\equiv  k+h\sqrt{D}(\delta a).$$ 

%\item[If $c=\gamma b \sqrt{D}$]

%then $$\eta'^l\overline{x'}\equiv -\eta^{l} \overline{x}(D b).$$  There exists  $k,h \in \Z$ with $2k\equiv 0(Db)$ such that $$\eta^l \overline{x}\equiv k+h\sqrt{D} (Db)$$  \end{enumerate}\end{prop}

%\begin{proof}
%If there exists $x \in X(c,0)$ we have $c\equiv 0(c')$ and $c'\equiv 0(c).$ By Theorem \ref{ftt} we conclude one of the following is true: $$c=(-1)^e \eta^l  \text{ or } c=(-1)^e \eta^lb\sqrt{D}$$  for some $a,b,l \in \Z, e \in \{0,1\}.$ Without loss of generality take $c=\gamma a,$ then we have $$\delta a(\eta^{'l} x-\eta^l x')\equiv 0(D a^2),$$ or $$\eta^{'l} x\equiv \eta^l x'(\delta a).$$ This implies  $\eta^{l}\overline{x}\equiv \eta^{'l} \overline{x'}(\delta a).$ The identical calculation for $c=\gamma b \sqrt{D},$ gives  $\eta^{'l}\overline{x'}=-\eta^l \overline{x}(Db).$ 
%The last statements of each case are clear if we set $\eta^l x=k+h\sqrt{D}$ for $k,h \in \Z.$

%\end{proof}

Now we look at the case $n \neq 0$ from Definition \ref{xdef}.

\begin{prop}\label{inverse} Let $x \in X(c,n).$ Let  $\xbar \in O_{\mathbf{K}} $ be an inverse of $x$ modulo $c.$ Then there exists an $r \in \ri$ such that
$rr'\equiv 1\pmod{n}$ and
\begin{equation}\label{a}
\xbar = \frac{\delta  c r+\delta'  c'}{n}.\end{equation}
The $r$ is uniquely determined modulo $\frac{n}{\delta}$ by the
equivalence class of  $x$, and the map from
$X(c,n)$ to the set  $r$ modulo $\frac{n}{\delta}$ is
injective.
\end{prop}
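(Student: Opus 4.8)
The plan is to start from the defining relation of $X(c,n)$ and manipulate it into a form exhibiting $\xbar$. By Definition \ref{xdef}, the pair $(x,m)$ satisfies $\delta' c' x + \delta c x' = n - m\mathbf{N}(\delta c)$. First I would reduce this identity modulo $c$: since $\delta c x' \equiv 0 \pmod{c}$ and $m\mathbf{N}(\delta c) = m \delta c \cdot \delta' c' \equiv 0 \pmod{c}$, we get $\delta' c' x \equiv n \pmod{c}$. Because $(x,c)=1$, the element $x$ is invertible mod $c$; I would also need $(n,c)=1$ here, which should follow from the defining equation (if a prime $\fp \mid c$ divided $n$, then $\fp \mid \delta' c' x$ forces $\fp \mid x$ since $(x,c)=1$ would be contradicted — one has to be a little careful with $\fp \mid \delta$, but $(l,D)=1$-type coprimality assumptions in the setup handle the ramified prime). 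Granting this, $\delta' c'$ is a unit times something coprime to $c$, so from $\delta' c' x \equiv n \pmod c$ we can solve: $\xbar \equiv \delta' c' n^{-1} \pmod c$ where $n^{-1}$ is an inverse of $n$ mod $c$.

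Next I would produce the claimed explicit formula. Define $r$ by the equation $n r = n\xbar\,(\delta c)^{-1}\cdot(\text{something})$ — more precisely, rearrange $\xbar = \frac{\delta c r + \delta' c'}{n}$ to read $n\xbar - \delta' c' = \delta c r$, i.e. $r = \frac{n\xbar - \delta' c'}{\delta c}$, and I must check this lies in $\Okr$ and satisfies $rr' \equiv 1 \pmod n$. Integrality: we need $\delta c \mid n\xbar - \delta' c'$. From $x\xbar \equiv 1 \pmod c$ and the original relation, multiply the defining equation by $\xbar$: $\delta' c' x\xbar + \delta c x'\xbar = n\xbar - m\mathbf{N}(\delta c)\xbar$, so $\delta' c' (1 + c k) + \delta c x'\xbar = n\xbar - m\mathbf{N}(\delta c)\xbar$ for some $k \in \Okr$, giving $n\xbar - \delta' c' = \delta' c' c k + \delta c x' \xbar + m \delta c \delta' c' \xbar$, and every term on the right is divisible by $\delta c$ provided $c' \mid (\text{stuff})$ — actually each term visibly carries a factor $\delta c$ or $c$ times $\delta'$... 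I would organize this as: $n\xbar - \delta'c' = \delta c\bigl(\delta' c' k / \delta + x'\xbar + m\delta' c' \xbar\bigr)$, and the only worry is the term $\delta' c' k/\delta$; here one uses that $\delta = \sqrt D$ generates the different, so $\delta' = -\delta$ up to sign and $\delta \mid \delta' c' k$ automatically. So $r \in \Okr$. Then applying the Galois conjugate $x\mapsto x'$ to the identity $n\xbar = \delta c r + \delta' c'$ gives $n \xbar' = \delta' c' r' + \delta c$, and multiplying the two: $n^2 \xbar\xbar' = (\delta c r + \delta'c')(\delta'c'r' + \delta c) = \mathbf{N}(\delta c) rr' + \delta^2 c^2 r + \delta'^2 c'^2 r' + \mathbf{N}(\delta c)$; reducing mod $n$ and using $\xbar \xbar' = \mathbf{N}(\xbar)$ together with the original equation $\delta'c'x + \delta c x' \equiv n \pmod{n^2 / \dots}$... this is exactly the computation that forces $\mathbf{N}(\delta c)(rr' - 1) \equiv 0$ and hence $rr' \equiv 1 \pmod n$ after dividing by $\mathbf{N}(\delta c)$ (legitimate since $(\mathbf{N}(\delta c), n) = 1$ by the coprimality established above).

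Finally, uniqueness and injectivity. From $\xbar = \frac{\delta c r + \delta' c'}{n}$, the residue class of $x$ mod $c$ determines $\xbar$ mod $c$, hence determines $\delta c r \pmod{nc}$, hence $r \pmod{n/\delta}$ (using that $\delta c$ times $r$ is pinned down mod $nc$, cancel $\delta c$); conversely the map $(x,m)\mapsto r \bmod n$ is injective because $x \bmod c$ is recovered from $r$ via $\xbar \equiv \delta' c' n^{-1}$ is already determined, but more directly: two pairs with the same $r$ give the same $\xbar$, hence the same $x \bmod c$, and then $m$ is forced by the defining equation. I expect the \textbf{main obstacle} to be the integrality claim $r \in \Okr$ — carefully tracking the ramified prime $\delta = \sqrt D$ through the division, and confirming the coprimality $(n, \mathbf{N}(c)) = 1$ that makes the final division by $\mathbf{N}(\delta c)$ valid; the $rr' \equiv 1 \pmod n$ congruence is then a formal consequence of applying Galois conjugation and multiplying, modulo bookkeeping.
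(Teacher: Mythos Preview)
Your definition of $r$ and your integrality argument are essentially the paper's (the paper writes the one-line check $n\xbar - \delta'c' = \delta'c'(x\xbar-1) + \delta c x'\xbar \equiv 0 \pmod{\delta c}$, which is what your computation amounts to once you note $\delta' = -\delta$). The uniqueness and injectivity are also fine.

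The genuine gap is in your proof of $rr'\equiv 1\pmod n$. Your plan is to multiply $n\xbar = \delta c r + \delta'c'$ by its conjugate, arrive at $\mathbf{N}(\delta c)(rr'-1)\equiv 0\pmod n$, and then \emph{divide by $\mathbf{N}(\delta c)$}, invoking the coprimality $(\mathbf{N}(\delta c),n)=1$ that you claim ``should follow from the defining equation.'' It does not. Your argument for $(n,c)=1$ only shows that a common prime $\fp$ of $c$ and $n$ must divide $\delta'c'$; but $\fp\mid c'$ is perfectly possible --- it simply means $\fp$ divides $d=(c,c')$. In fact the whole setup in the paper explicitly allows $d\mid n$ with $d>1$ (see the definition of $Y(c,n)$ and the later analysis over $X_n(r,d)$), so $\mathbf{N}(\delta c)$ and $n$ are typically \emph{not} coprime and your cancellation step is illegitimate.

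The paper avoids this by never dividing: it expands $rr'$ directly and shows
\[
rr' = 1 + n\left[\frac{\xbar' q}{\delta} + \frac{\xbar q'}{\delta'}\right],
\]
where $x\xbar - 1 = qc$. Since $\delta'=-\delta$, the bracket equals $(\xbar' q - \xbar q')/\delta$, a difference of Galois conjugates divided by $\delta$, hence a rational integer. So $rr'-1$ is exhibited as $n$ times an integer, with no coprimality needed. You should replace your multiply-and-cancel step with this direct computation.
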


\begin{proof} Set
$$
r = \frac{n\xbar-\delta'c'}{\delta  c}.
$$

As $x \in X(c,n)$ we have the equality $$\delta'  c' x+\delta  c x' = n-m\mathbf{N}(\delta  c)$$ for some $m \in \Z.$

Note that $r$ is an integer in the field $\mathbf{K}$ because
$$
n\xbar-\delta' c'= (\delta'  c'x+\delta  cx'+m\mathbf{N}(\delta c))\xbar - \delta'  c' = \delta' c'(x\xbar -
1)+\delta  c x'\xbar + m\mathbf{N}(\delta c)\xbar \equiv  0 \pmod{\delta c}
$$

It is clear that the equivalence class of $r$ is determined by the equivalence class of
$\xbar$.  Indeed, if we replace $\xbar$ by $\ybar = \xbar+\mu
c$,  $r$ is replaced by
\begin{equation}\label{eq:sqq}
s =  r+\mu \frac{n}{\sqrt{D}}
\end{equation}

If  $x$ and $y$  in $X(c,n)$ are both
associated to $r$, then $\xbar=\ybar.$ Therefore $x\equiv y\pmod{c}.$ 
 Finally,
\begin{align*}
rr' & =  \left(\frac{n\xbar -
\delta'  c'}{\delta  c}\right)\left(\frac{n\xbar' - \delta  c}{\delta'  c'}\right)  = 1 +
\frac{n^2\xbar\xbar'-n\xbar \delta c-n\ybar \delta'  c'}{\mathbf{N}(\delta  c)} = 1 +
n\frac{n\xbar\xbar'-\xbar \delta  c-\xbar' \delta'  c'}{\mathbf{N}(\delta  c)}. \end{align*}
 But
 $$
 n\xbar\xbar' = (\delta' cx+\delta  cx+m\mathbf{N}(\delta c))\xbar\xbar' = \delta'  c'x\xbar\xbar' + \delta  c\xbar x'\xbar'+m\mathbf{N}(\delta c)\xbar \xbar'
 $$
 so we have
 \begin{align*}
rr' &=  1 + n\frac{\delta'  c' x\xbar\xbar' +  \delta  c\xbar x'\xbar'-\xbar \delta  c-\xbar' \delta'  c'+m\mathbf{N}(\delta c)\xbar \xbar'}{\mathbf{N}(\delta  c)}\\
&=  1 + n\left[\frac{\delta'  c'(x\xbar -1)\xbar' + \delta  c(x'\xbar' - 1)\xbar}{\mathbf{N}(\delta  c)}+m\xbar \xbar'\right]\\
&=1+n\left[\frac{\xbar' q}{\delta}+\frac{\xbar q'}{\delta'}+m\xbar \xbar'\right], q \in \mathbf{O_K}.
\end{align*}
The expression in brackets is an integer of the field, so   $rr'\equiv
1\pmod{n}$. It is also easy to check that from \eqref{eq:sqq}, $$(r+\mu\frac{n}{\sqrt{D}})(r'-\mu'\frac{n}{\sqrt{D}})\equiv rr' \equiv 1 (n).$$
\end{proof}

 \begin{definition} Let $c$ be an integer in $O_{\mathbf{K}}.$ Set  $d = (c,c')$. Assume that $d|n$.
 Let $Y(c,n)$ be the set of  classes $r\in(O_{\mathbf{K}}/(\frac{n}{\delta}))^*$ with $rr'\equiv1(n),$ and such that
\begin{enumerate}
\item[(a)] $(\delta  c/d)r+(\delta'  c'/d) \equiv 0\pmod{\frac{n}{d}}$
\item[(b)] $(\delta  c/d)r+(\delta'  c'/d) \not\equiv 0 \pmod{\frac{n}{k}}$
if  $k|d$ and  $k<d$.
\end{enumerate}
 \end{definition}

It is easy to check this definition is well-defined on classes $r\in(O_{\mathbf{K}}/(\frac{n}{\delta}))^*$.

 \begin{prop}\label{bijection} The map $i:x \to r$ defines a bijection between $X(c,n)$
 and $Y(c,n).$
 \end{prop}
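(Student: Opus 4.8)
The plan is to establish the bijection $i:X(c,n)\to Y(c,n)$ in two stages: first show that $i$ is well-defined with image landing in $Y(c,n)$, then exhibit an explicit inverse. From Proposition~\ref{inverse} we already know that $(x,m)\mapsto \xbar \mapsto r = \frac{\delta c r + \delta' c'}{n}$ (equivalently $r = \frac{n\xbar - \delta' c'}{\delta c}$) is a well-defined injective map from $X(c,n)$ into the set of residues $r$ modulo $\frac{n}{\delta}$ satisfying $rr'\equiv 1\pmod n$; in particular $r\in(O_{\mathbf K}/(\frac n\delta))^*$. So the first real task is to verify that this $r$ satisfies conditions (a) and (b) defining $Y(c,n)$. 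Condition (a) should fall out directly: writing $d=(c,c')$, the defining relation $\delta' c' x + \delta c x' = n - m\mathbf N(\delta c)$ forces $d\mid n$ (since $d$ divides the left side and $\mathbf N(\delta c)$), and then rearranging $n\xbar = \delta c r + \delta' c'$ modulo $\frac{n}{d}$ after dividing through by $d$ gives $(\delta c/d) r + (\delta' c'/d)\equiv 0 \pmod{\frac nd}$. Condition (b) is the minimality/primitivity statement: if it failed for some proper divisor $k\mid d$, one would get a congruence that, pulled back through the formula $\xbar = \frac{\delta c r + \delta' c'}{n}$, contradicts $(x,c)=1$ — I would argue this by showing such a $k$ would force a common factor between $x$ and $c$, or equivalently that $\xbar$ fails to be a unit mod the relevant divisor of $c$.

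For surjectivity and the construction of the inverse, I would run Proposition~\ref{inverse} in reverse. Given $r\in Y(c,n)$, define $\xbar := \frac{\delta c r + \delta' c'}{n}$; condition (a) guarantees this is an algebraic integer (the numerator is divisible by $n$ — here one must be careful with the factor $d$, using that $(\delta c/d) r + (\delta' c'/d)\equiv 0\pmod{\frac nd}$ together with the complementary divisibility to get divisibility by the full $n$). One then sets $x$ to be an inverse of $\xbar$ modulo $c$ — this requires checking $(\xbar, c)=1$, which is exactly where condition (b) is used: a nontrivial common divisor of $\xbar$ and $c$ would be a $k\mid d$, $k<d$, violating (b). Finally $m$ is forced: define $m := \frac{n - \delta' c' x - \delta c x'}{\mathbf N(\delta c)}$ and check it is a rational integer, using that $\delta' c' x + \delta c x'$ is fixed by the Galois action (so lies in $\Z$) and is $\equiv n \pmod{\mathbf N(\delta c)}$ (which follows from $x\xbar \equiv 1\pmod c$ and its conjugate, exactly as in the $rr'\equiv 1$ computation in Proposition~\ref{inverse}). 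Then $(x,m)\in X(c,n)$ and by construction $i(x,m) = r$.

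The last piece is to confirm the two constructions are mutually inverse, which is essentially bookkeeping: both directions are governed by the single relation $n\xbar = \delta c r + \delta' c'$, and the injectivity already proved in Proposition~\ref{inverse} means distinct classes $(x,m)$ give distinct $r$, while the inverse construction shows every $r\in Y(c,n)$ is hit exactly once. I expect the main obstacle to be handling condition (b) cleanly in both directions — translating the "not divisible by $\frac nk$ for proper $k\mid d$" primitivity condition into the coprimality statement $(\xbar, c)=1$ (and back), since this involves tracking how the gcd $d=(c,c')$ interacts with the ideal factorization of $c$ in $O_{\mathbf K}$ and with the norm $\mathbf N(\delta c)$. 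A secondary technical point is the divisibility argument showing $\frac{\delta c r + \delta' c'}{n}$ is integral given only condition (a), which needs the relation between divisibility by $\frac nd$ "on the $d$-reduced level" and divisibility by $n$ itself — likely handled by a local (prime-by-prime) analysis in $O_{\mathbf K}$.
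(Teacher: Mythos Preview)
Your proposal is correct and follows essentially the same route as the paper's proof. The two points you flag as potential obstacles are in fact simpler than you fear: the integrality of $\xbar = \frac{\delta c r + \delta' c'}{n}$ is immediate from condition~(a) since $\frac{(\delta c/d)r + (\delta' c'/d)}{n/d} = \frac{\delta c r + \delta' c'}{n}$ (no prime-by-prime analysis needed), and the coprimality $(\xbar,c)=1$ is handled in the paper by checking separately that $\xbar$ is prime to $d$ (this is exactly where condition~(b) enters, as you anticipated) and prime to $c/d$ (which follows from $(c/d,c'/d)=1$).
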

 \begin{proof} Let $x \in X(c,n)$. We show that the associated $r$ belongs to $Y(c,n).$ We have
$$
\xbar = \frac{ \delta  cr+\delta'  c'}{n} = \frac{(\delta  c/d)r+(\delta'  c'/d)}{(n/d)}.
$$
Therefore, $\frac{\delta  cr}{d}+\frac{\delta' \ c'}{d} \equiv 0\pmod{\frac{n}{d}}$ and (a) is
satisfied.  Suppose that $m$ is a proper divisor of  $d$
 and let $k=d/m$. We claim that  $\frac{\delta  c}dr+\frac{\delta'  c'}d\not\equiv 0\pmod{\frac{n}{k}}.$ If this were not the case,
we would have
$$
\xbar = \frac{(\delta'  c'/d)+(\delta  c/d)r}{(n/d)} =
m\frac{(\delta'  c'/d)+(\delta  c/d)r}{(n/k)}.
$$
This would imply that $m$ divides $\xbar$, which contradicts the
fact that $\xbar$ is a unit modulo $c$. Also by the previous proposition, $rr'\equiv1(n).$ Therefore (b) is
satisfied and $r\in Y(c,n)$. Furthermore, the map $i$ is
injective on $X(c,n)$  by Proposition \ref{inverse}. Next,
assume that $Y(c,n)$ is non-empty.
 Let $r\in O_{\mathbf{K}}$ be relatively prime to $n$ and assume that $r\pmod{n}$ belongs to $Y(c,n)$.
Set
\begin{equation}\label{eq:xi}
\xi = \frac{( c/d)r-(\ c'/d)}{n/\delta d} = \frac{ cr -   c'}{n/\delta}.
\end{equation}
Then $\xi$ is relatively prime to $d$ because
$(  c/d)r-(  c' /d)\not\equiv 0\pmod{n/k\delta}$ for all proper
divisors $k$ of $d$. On the other hand, if $q$ is a common factor
of both $\xi$ and  $  c/d$, then  $q| c' /d$. But $( c/d, c'/d)=1$
so $q$ is a unit. This proves that $\xi$ is prime to both $d$ and $ c/d$,
and hence is a unit modulo $c$. Now choose $x\in O_{\mathbf{K}}$ such that
$x\xi\equiv 1\pmod{c}$ and set $\xbar = \xi$. Then
$$
x\xbar = 1 + \mu c
$$
for some $\mu\in O_{\mathbf{K}}$. As well, the property $rr' \equiv 1(n)$ implies, by multiplying \eqref{eq:xi} by $r',$ that $r'  \equiv x\overline{x'}(\delta c).$

We claim that there exists a $m \in \Z $ such that $$
\delta' c' x +  \delta  c x' = n -m\mathbf{N}(\delta  c).$$ This certainly shows $x \in X(c,n).$

We notice first $$\frac{n-\delta' c' x}{\delta  c} \in O_{\mathbf{K}}. $$ Indeed, by \eqref{eq:xi} we have $\delta' c'=\overline{x}n - \delta  cr,$ so $$ \frac{n-\delta' c' x}{\delta  c}=\frac{n-(\overline{x}n - \delta  cr)x}{\delta  c}=\frac{n(1-x\overline{x}) +\delta  c rx}{\delta  c}=rx+\frac{n\mu}{\delta} \in O_{\mathbf{K}}. $$

Now, by the above argument, there exists some $y \in O_{\mathbf{K}}$ and $m \in \Z$ such that 
 $$y +m\delta' c'=\frac{n-\delta' c' x}{\delta  c}.$$ But this implies \begin{equation}\label{eq:gag}\delta  c y  + \delta' c' x=\delta'  c' y'  + \delta c x'=n-m\mathbf{N}(\delta c) \in \Z.\end{equation} We will show $x' \equiv y(\delta' c').$ Using $r'  \equiv x\overline{x'}(\delta c)$  multiply \eqref{eq:gag} by $\overline{x'}$ to get $$\delta  c y\overline{x'}  + \delta' c' r'=\delta'  c' y'\overline{x'}  + \delta c\equiv n\overline{x'}\text{ } (\mathbf{N}(\delta c)).$$ With \eqref{eq:xi}, this equality reduces to $$\delta  c y\overline{x'}  + \delta' c' r'=\delta'  c' y'\overline{x'}  + \delta c \equiv \delta' c' r' + \delta c\text{ } (\mathbf{N}(\delta c)),$$ or $\delta  c y\overline{x'}\equiv  \delta c\text{ }  (\mathbf{N}(\delta c)).$ This implies $x' \equiv y(\delta' c').$

%Take the above equation modulo $\delta' c'$ to get $$ \delta c y \equiv n (\delta' c').$$ as well modulo $\delta c$ to get $$ \delta' c' x \equiv n (\delta c).$$ Taking the Galois conjugate of the first equation and equating the two implies $x \equiv y'(\frac{\delta c}{d}).$

%So $$\delta  c y  + \delta' c' x=\delta'  c' y'  + \delta c x'.$$ It is an easy check then that $y=x'.$

%Now, by the above argument, there exists some $m \in O_{\mathbf{K}}$ such that $$x' +m\delta' c'=\frac{n-\delta' c' x}{\delta  c}.$$  But this implies $$ \delta  c x'  + \delta' c' x=n-m\mathbf{N}(\delta c),$$ which implies $m \in \Z.$ 

Now if we take $$s=r+\gamma\frac{n}{\delta},$$ its clear \eqref{eq:xi} changes $\xi \to \xi +\gamma c,$ and the rest of the argument follows analogously. Therefore equivalence classes map to equivalence classes. 
This proves the surjectivity and hence the bijection.
\end{proof}

\section{Taking Geometric side of Trace formula}\label{sec:howto}
In this section we rewrite the left hand side of Theorem \ref{theo} and discuss what manipulations are taken in the following sections.
Using the Kuznetsov formula the left hand side of Theorem \ref{theo} equals,
\begin{multline} (L):= \frac{1}{X} \sum_{\mm,n\in \Z}
g(\mm^2 n/X) \bigg(\sum_{\Pi \neq \mathbf{1}}
h(V,\nu_{\Pi})c_{n}(\Pi)\overline{c_{l}(\Pi)}+
\{CSC^{\K}_{n,l}\}\bigg) =\\ \frac{1}{X} \sum_{\mm,n\in \Z}
g(\frac{\mm^2 n}{X})  \sum_{c \in \ri, c \neq 0}
\frac{1}{\mathbb{N}(c)} S(n , l,c)V_1(\frac{4\pi
\sqrt{n l}}{c})V_2(\frac{4\pi
\sqrt{n l'}}{c'}).\end{multline}

We now break up the Kloosterman sums and gather all the $n$-terms.
We can do this because the $c$ and $n$ sum are finite due to the
support of $g$ and $V.$ We have

\begin{equation} \frac{1}{X}  \sum_{\mm}  \sum_{c \in \ri, c \neq 0}
\frac{1}{\mathbb{N}(c)}\sum_{x  (c)^{*}}
  e(\frac{\overline{ x}l}{\delta c}+\frac{\overline{ x'}l'}{\delta' c'})\end{equation} $$\left\{
  \sum_{n\in \Z}
 e(n(\frac{ x}{\delta c}+\frac{ x'}{\delta' c'}))g(\frac{\mm^2 n}{X})
 V_1(\frac{4\pi
\sqrt{nl}}{c})  V_2(\frac{4\pi
\sqrt{nl'}}{c'})\right\},
$$
where $\overline{x}$ is the multiplicative inverse of $x(c).$

Since the term in brackets is smooth, we can and do apply Poisson to the $n$-sum to get 
\begin{equation}\label{eq:LLL00}  \frac{1}{X} \sum_{\mm}  \sum_{c \in \ri, c \neq 0} \frac{1}{\mathbb{N}(c)}\sum_{x  (c)^{*}}
  e(\frac{\overline{ x}l}{\delta c}+\frac{\overline{ x'}l'}{\delta' c'})\end{equation}
  $$\left\{
 \sum_m
 \int_{-\infty}^{\infty} e(t(\frac{x \delta' c' + x'  c \delta-\mathbf{N}(\delta c)m}{\mathbf{N}(\delta c )}) )g(\frac{\mm^2t}{X})
 V_1(\frac{4\pi
\sqrt{tl}}{c})  V_2(\frac{4\pi
\sqrt{tl'}}{c'})dt\right\}.
$$

We now make a change of variables $t \to Xt$ to get \begin{equation}\label{eq:LLL}  \sum_{\mm}  \sum_{c \in \ri, c \neq 0}
\frac{1}{\mathbb{N}(c)}\sum_{x  (c)^{*}}
  e(\frac{\overline{ x}l}{\delta c}+\frac{\overline{ x'}l'}{\delta' c'})\end{equation}
  $$\left\{
 \sum_m
 \int_{-\infty}^{\infty} e(Xt(\frac{x \delta' c' + x'  c \delta-\mathbf{N}(\delta c)m}{\mathbf{N}(\delta c )}) )g(\mm^2t)
 V_1(\frac{4\pi
\sqrt{Xtl}}{c})  V_2(\frac{4\pi
\sqrt{Xtl'}}{c'})dt\right\}.
$$

As we have fixed $l,$ let 
$$   I_{\mm}(n,c,X): = \int_{-\infty}^{\infty}
e(\frac{Xtn}{\mathbf{N}(\delta c)})g(\mm^2 t)
 V_1(\frac{4\pi
\sqrt{Xtl}}{c})  V_2(\frac{4\pi \sqrt{Xtl'}}{c'})dt.
$$

Then $(L)$ is equal to 
\[  \sum_{\mm}
 \sum_{c \in \ri, c \neq 0}
\frac{1}{\mathbb{N}(c)}\sum_{x  (c)^{*}}
e(\frac{\overline{x}l}{\delta c}+\frac{\overline{x'}l'}{\delta' c'}) \sum_{m \in
\mathbf{Z}}
 I_{\mm}(x \delta' c' + x'  c \delta-\mathbf{N}(c \delta)m ,c,X).
\]

 Let $X'(c,n)$ be the set of solutions $(x,m)$ of
the equation
$$
 \delta' c'x +   \delta c x' -\mathbf{N}(\delta c)m = n,
$$
where $x$ range over a fixed set of representatives of
$(O_{\mathbf{K}}/cO_{\mathbf{K}})^*$ and $m \in \Z.$ 

We are free to interchange sums here as the compact support of the test functions $g,V$ imply $j^4\N(c) \sim X,$ so we write $(L)$ as $$
 \sum_{\mm} \sum_{n\in\mathbf{Z}}   \sum_{c \in \ri, c \neq 0}
\frac{1}{\mathbb{N}(c)}\sum_{(x,m)\in X'(c,n)}
 e(\frac{\overline{x}l}{\delta c}+\frac{\overline{x'}l'}{\delta' c'})  I_{\mm}(n,c,X).
$$

Note by a standard integration by parts argument in the $t$-integral inside of $I_{\mm}(n,c,X)$ that the only non-negligible $n$ are those satisfying $n\ll X^{\epsilon}$  for all $\epsilon >0.$ This same estimate is seen in \cite{H}.

%\begin{proof}
%Given $x \in X(c,n)$ by definition there exists an $m$ such that  $\delta' c'x +   \delta c x' -\mathbf{N}(\delta c)m = n,$ so define a map by $x \to (x,m).$ If we take another representative $y\equiv x \pmod c,$ or $y=x+\mu c, \mu \in \ri_{\K},$ then

 We claim there is a bijection between $X'(c,n)$ and $X(c,n).$ By Definition \ref{xdef}, for $x \in X(c,n)$ there exists $m \in \Z$ such that $\frac{\T(\delta c x')-n}{\N(\delta c)} =m.$ We define a map $x \to (x,m).$ Recall $x$ is an equivalence class modulo $c.$ If we change the representative of this class to $y= x +\mu c,$ then we have the map $ y \to (y, m-\T(\delta' \mu)$ so our map is well defined on equivalence classes. This map is surjective by construction. If $(x,m)$ and $(x,l)$ are in $X'(c,n),$ then by an easy check $m=l$ and so the map is injective.

%Likewise, assume $(x,m) \in X'(c,n)$ and that $y=x+\mu c.$ If $(y,b) \in X'(c,n)$ for some $b \in \Z,$ then $(x,b+\T(\delta' \mu)) \in X'(c,n),$ which implies by the previous argument $b=m-[\delta' \mu+\delta \mu].$ Therefore, we have a bijection between the set
%$(x,m)\in X'(c,n)$ and the set of equivalence classes  $x$ in $X(c,n)$ from Definition \ref{xdef}. \end{proof}

Thus we may replace the sum
over $X'(c,n)$ with a sum over $X(c,n)$:
\[
 (L) = \sum_{\mm} \sum_{n\in\mathbf{Z}}   \sum_{c \in \ri, c \neq 0}
\frac{1}{\mathbb{N}(c)}\sum_{x\in X(c,n)}
 e(\frac{\overline{x}l}{\delta c}+\frac{\overline{x'}l'}{\delta' c'}) I_{\mm}(n,c,X).
\]
Finally, let
\begin{equation} A_{n,X}:=  \sum_{\mm}  \sum_{c \in \ri, c \neq 0}
\frac{1}{\mathbb{N}(c)}\sum_{x\in X(c,n)}
 e(\frac{\overline{x}l}{\delta c}+\frac{\overline{x'}l'}{\delta' c'}) I_{\mm}(n,c,X);
\end{equation}
then by an interchange of sums we can write
 \begin{equation} \label{eq:an} (L) =  \sum_{n \in \mathbf{Z}} A_{n,X}.\end{equation}
Here again the interchange of sums is legitimate by the support of the functions $g, V.$

We now see that \eqref{eq:an} equals

\begin{equation} \sum_{D|n \in \Z} A_{n,X}\end{equation} by Lemma \ref{nDa}.

Now for $n \neq 0,$ we can use the bijection of Proposition
\ref{bijection} to rewrite $A_{n,X}$ as a sum over  $r\in
Y(c, n)$:

\[
A_{n,X}= \sum_{\mm} \sum_{\substack{r\in O_{\mathbf{K}}/(\frac{n}{\delta})\\rr'\equiv1(n)}} e(\frac{ r l+ r' l'}{n})
 \sum_{\substack{ c \in \ri, c \neq 0 \\ r\in
Y(c,n)}} \frac{1}{\mathbb{N}(c)}
 e(\frac{-1}{n}(\frac{l c'}{ c }+\frac{l' c}{ c' })) I_{\mm}(n,c,X)
 \]

In summary, we have used the number theoretic lemmas of the previous section to remove the term $e(\frac{-1}{n}(\frac{l c'}{ c }+\frac{l' c}{ c' })$ from the Kloosterman sum over the quadratic field $\K$ and replace it with a more manageable sum. We say a more manageable sum in that for a fixed $r \mod (\frac{n}{\delta}),$ the $c$-sum is essentially now a smooth sum with certain congruence conditions. In other words, we swapped out a complicated Kloosterman sum in the $c$-variable with no arithmetic conditions that is very difficult to estimate with a smooth sum with more complicated congruence conditions. We explain these complicated congruence conditions in Section \ref{sec:casen}.

 \begin{definition}
 Let $X_n(r)$ be the set $c$ such that $r\in Y(c,n)$.
 \end{definition}

\begin{definition}\label{hnn}
Let
$$H_{n,\mm}(x,y):=\frac{1}{j^2xy}e(\frac{-1}{n}(\frac{xl'}{y}+\frac{yl}{x}))e(\frac{n}{xy\mathbf{N}(\delta )}) V_1(\frac{4\pi \sqrt{l}}{x})  V_2(\frac{4\pi \sqrt{l'}}{y}).$$

\end{definition}

The main result of the calculations on the geometric side of our trace formula can be broken down into the
cases: $n=0,$ and $n\neq 0.$

We let \[ \delta(l,l') = \left\{ \begin{array}{ll}
         1 & \mbox{if $l=l'$};\\
        0 & \mbox{if $l\neq l'$}.\end{array} \right. \] 

In Section \ref{sec:case0} we show that for any integer $M \geq 0,$ \begin{equation}\label{a0c}
A_{0,X}=  \delta(l,l') \int_0^\infty \int_0^{\infty} V_1(x)V_2(x)\frac{dx}{x} + O(X^{-M}).
\end{equation}

In Section \ref{sec:casen}  for fixed $n\neq 0,$ $r\in(O_{\mathbf{K}}/(\frac{n}{\delta}))^*$ and any integer $M \geq 0,$   we wil show that
\begin{equation}\label{maincalc}
 \sum_{\mm} \sum_{c\in
X_n(r)} \frac{1}{\mathbb{N}(c)}
 e(\frac{-1}{n}(\frac{l c'}{ c }+\frac{l' c}{ c' })) I_{\mm}(n,c,X) =\frac{1}{n}  \int_{0}^{\infty}  \int_{0}^{\infty} H_{n,\mm}(x,y)dxdy +O_((nX)^{-M}) .\end{equation} In particular the main term is independent of $r$ except for in the exponential sum $e(\T (\frac{rl}{n})).$ In Section \ref{sec:zag} for $(l,D)=1,$ we show that the $r$-sum in $A_{n,X}$ can be replaced by a sum of Kloosterman sums twisted by the character $\chi_{D}$ attached to the quadratic field $\mathbf{K}.$
Specifically, using Lemma \ref{nDa} we  can write $n=Da, a \in \Z$ and using the exponential sum identity in \cite{Z}, $$ 
 \sum_{\substack{r\in O_{\mathbf{K}}/(\frac{Da}{\delta})\\rr'\equiv1(Da)}} e(\frac{ r l+ r' l'}{Da})= \sum_{\substack{r\in \N \\ r^2 |\N(l) \\ r|a}} r S_{D}(\frac{ll'}{r^2},1,\frac{Da}{r}).$$ 

Combining Sections \ref{sec:casen} and \ref{sec:zag}  we get $$A_{Da,X}=
\frac{1}{Da} \sum_{\substack{r\in \N \\ r |l \\ r|a}} r S_{D}(\frac{ll'}{r^2},1,\frac{Da}{r})\int_{0}^{\infty}
\int_{0}^{\infty} H_{Da,1}(x,y)dxdy +O((aX)^{-M}).$$

Lastly, in Sections \ref{sec:case0}, \ref{sec:casen}, and \ref{sec:zag} we will show
\begin{multline}\label{eq:big} (L)=  \delta(l,l')\int_0^\infty \int_0^{\infty} V_1(x)V_2(y)\frac{dxdy}{xy} +  \sum_{a=1}^\infty \frac{1}{Da}
\sum_{\substack{r\in \N \\ r^2|\N(l) \\ r|a}} r S_{D}(\frac{ll'}{r^2},1,\frac{Da}{r})\times \\ \int_{0}^{\infty}
\int_{0}^{\infty} H_{Da,1}(x,y)dxdy +O(X^{-M}).\end{multline}

If $l=l',$ then this equation above is almost the Kuznetsov trace formula over $\Q$ for representations $\pi_D$ in Section \ref{sec:tree1}. In Section \ref{sec:putti} we make this connection.

\section{Computing $A_{n,X}.$}\label{sec:qann}

We separate the cases of $A_{0,X},$ and $A_{n,X},n \neq 0.$ We compute $A_{0,X}$ first.
\subsection{Evaluating $A_{0,X}$}
\label{sec:case0}

%Let us first state a theorem.

%\begin{theorem}\label{ftt} [[FT], V.1.16]
%Let $\tau$ denote the non-trivial automorphism of $\K \backslash \Q,$ and let $\mathtt{a}$ denote an $\ri$-ideal with the property $\mathtt{a}^{\tau}=\mathtt{a},$ then $\mathtt{a}=r \mathtt{q},$ where $r$ is a positive rational and $\mathtt{q}$ is a square-free $\ri$-ideal which is divisible only by ramified primes of $\K \backslash \Q.$

%\end{theorem} 

\indent 
For this section only for $a \in \Z$ we use the notation $y\mod (a)_{\Z}$ for rational integral residue classes modulo $a.$

We recall from Proposition \ref{n00} that $X(c, 0), c \neq 0$ is empty unless there exists a unit $\eta \in \ri$ such that $c'=\eta c$ and an $x (c)$ such that $x' \equiv \eta x (\delta'c')$ or $\overline{x'} \equiv \eta' \overline{x} (\delta'c').$ Using a linear independence of characters argument, such a $c$ must equal $\delta^i a, a \in \Z, i \in \{0,1\}.$ This also implies $\eta=(-1)^i, i \in \{0,1\}.$

  % By Theorem \ref{ftt}  the ideal $\mathtt{c}$ generated by $c$ satisfies $\mathtt{c}^{\tau}=\mathtt{c},$ and therefore $c= aD_1$ for $a \in \N,$ and $D_1$ some generator of the ramified ideal $\mathcal{D}_1 |\mathcal{D}.$ Clearly $c'=\eta c$ implies $D_1'=\eta D_1.$ We can choose a $\epsilon \in \ri$ such that $\eta=\frac{\epsilon'}{\epsilon}$ with $(\epsilon, \epsilon')=1$ by Hilbert's Theorem $90.$ 

Using Proposition \ref{n00}, the exponential sum inside $A_{0,X}$ for $c= \delta^i a$ is \begin{equation*}
\sum_{x \in X(\delta^ia,0)} e(\frac{\overline{x}l}{\delta \delta^ia}+\frac{\overline{x'}l'}{\delta'  \delta'^ia})= \sum_{\substack{x( \delta^ia)^{*}\\x' \equiv (-1)^i x (\delta'\delta^ia)}}e(\frac{\overline{x}l}{\delta  \delta^i a}+\frac{\overline{x'}l'}{\delta'  \delta'^i a}).
\end{equation*}
For $i=1$ the set of $x\mod ( \delta a)$ such that $x' \equiv - x (\delta' \delta a)$ can be represented by $x=\delta y, y \in \Z, y \mod  (a)_{\Z}.$ However, these representatives (and any other choice of representatives) are never coprime to $\delta a,$ and therefore the exponential sum is zero. We consider only the case $i=0.$

Now we assume $i=0.$ We can rewrite the above last line as \begin{equation}\label{eq:ble} \sum_{\substack{x( a)^{*}\\ x' \equiv  x (\delta a)}}e(\frac{\overline{x}l}{\delta  a}-\frac{\overline{x}l'}{ \delta a } )=\sum_{\substack{x( 
a)^{*}\\ x' \equiv  x (\delta a)}}e(\frac{\overline{x}}{  a}( \frac{l-l'}{\delta})).
\end{equation}

We know $z_l:=  \frac{l-l'}{\delta} \in \Z$ and we can choose as representatives of the set $x\mod( a)^{*}, x' \equiv  x (\delta a)$ to be $y \in \Z, y\mod (a)^{*}_{\Z}.$ This sum is a standard Ramanujan sum and equals

\begin{equation*}
 \left\{ \begin{array}{ll}
       \mu\left(\frac{a}{(z_l,a)}\right)\frac{\phi(a)}{\phi(\frac{a}{(z_l,a)})} & \text{if } l \neq l',  \medskip \\
       \phi(a) & \text{if }l =l', \end{array} \right. \end{equation*}

Using \eqref{eq:ble} we can rewrite $A_{0,X}$ if $l \neq l'$ as

\begin{multline} \label{eq:c0} A_{0,X}=\sum_{\mm}   \sum_{\eta \in \ri^{*}}  \sum_{\substack{c\neq 0 \in \ri\\ c'=\eta c}} \frac{1}{\N(c)}
\sum_{\substack{x( c)^{*}\\x' \equiv \eta x (\delta'c')}}e(\frac{\overline{x}l}{\delta  c}+\frac{\overline{x'}l'}{\delta'  c'})
\times \\ \int_{-\infty}^{\infty}  g(\mm^2 t)
 V_1(\frac{4\pi
\sqrt{Xtl}}{c})  V_2(\frac{4\pi
\sqrt{Xtl'}}{c'})dt =\\
 \sum_{\mm}   \sum_{a =1}^\infty \frac{1}{a^2}
  \mu\left(\frac{a}{(z_l,a)}\right)\frac{\phi(a)}{\phi(\frac{a}{(z_l,a)})}
 \int_{-\infty}^{\infty}  g(\mm^2 t) V_1(\frac{4\pi\sqrt{Xtl}}{a})  V_2(\frac{4\pi\sqrt{Xtl'}}{a})dt \end{multline}

While if $l'=l,$ $$A_{0,X}=\sum_{\mm}   \sum_{a =1}^\infty \frac{\phi(a)}{a^2}\int_{-\infty}^{\infty}  g(\mm^2 t) V_1(\frac{4\pi\sqrt{Xtl}}{a})  V_2(\frac{4\pi\sqrt{Xtl}}{a})dt. $$

Let \[ \delta(l,l') = \left\{ \begin{array}{ll}
         1 & \mbox{if $l=l'$};\\
        0 & \mbox{if $l\neq l'$}.\end{array} \right. \] 

\begin{prop}\label{m000}
For any integer $M \geq 0,$
\begin{equation}\label{eq:c1}
 A_{0,X} =
 \delta(l,l') \int_0^\infty \int_0^{\infty} V_1(x)V_2(x)\frac{dx}{x} +O(X^{-M}).
\end{equation}

\end{prop}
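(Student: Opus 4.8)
The plan is to analyze the two families of integrals in \eqref{eq:c0} separately and show that the only surviving main term comes from the ``diagonal'' contribution $l = l'$, with everything else being $O(X^{-M})$. First I would examine the inner $t$-integral. After the change of variables made earlier we have $g(\mm^2 t)$ against a product $V_1(4\pi\sqrt{Xlt}/a)V_2(4\pi\sqrt{Xl't}/a)$ (and similarly for the $c = b\sqrt{D}$ family), and since $V_1, V_2$ are compactly supported in $\R^+$ the $t$-integral forces $a \asymp \sqrt{X}$, i.e.\ the $a$-sum (resp.\ $b$-sum) is effectively restricted to a dyadic-type range around $\sqrt{X}$. The key observation is that for the $c = \sqrt{D}b$ family the argument of the exponential is $e(y(l-l')/(\delta b))$, which is \emph{not} a genuine additive character unless one first notes $\delta = \sqrt{D}$ so this is $e(y(l-l')/(\sqrt{D}b))$; I would rewrite $l - l' = (\text{something})\sqrt{D}$ using that $l - l'$ is a trace-zero element of $\Okr$, hence divisible by $\sqrt{D}$, so that the exponent becomes a rational additive character $e(y \cdot \frac{(l-l')/\sqrt{D}}{b})$.

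Second, I would evaluate the complete Ramanujan-type sum over $y$. For the first family $\sum_{y(a)^*, y\in\Z} e(y(l-l')/a) = c_a(l-l')$, the Ramanujan sum, and likewise $c_{Db}((l-l')/\sqrt{D})$ for the second. When $l \neq l'$, $l - l'$ is a fixed nonzero rational integer (after the $\sqrt{D}$ normalization), and one sums $\sum_{a} c_a(l-l') \cdot (\text{smooth weight supported at } a\asymp\sqrt{X}) / a^2$ against the $\mm$-sum. Here I would invoke exactly the mechanism already used throughout the paper (Poisson summation / the smoothness of the cutoff, together with the extra $\mm$-averaging and the identity $\int_0^\infty g = 1$): because the weight is smooth and $C_0^\infty$, repeated integration by parts in $t$ gains arbitrary powers of $X^{-1}$, so each such term is $O(X^{-M})$. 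When $l = l'$, the Ramanujan sum degenerates: $c_a(0) = \varphi(a)$, and $\sum_a \varphi(a)/a^2 \cdot (\cdots)$ now has a genuine main term; summing the smooth weight over $a$ (a smooth sum that converges after the $\mm$-average collapses via $\int g = 1$) produces $\frac12 \int_0^\infty\int_0^\infty V_1(x)V_2(y)\frac{dx\,dy}{xy}$ after the change of variables $x = 4\pi\sqrt{Xlt}/a$, $y = 4\pi\sqrt{Xl't}/a$. The second family contributes the same expression scaled by $1/D$, giving the combined factor $(1 + \frac1D)$.

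The main obstacle is the precise bookkeeping of the $\mm$-sum and the $a$-sum together: one has to argue that after Poisson summation in $n$ (already carried out before this section) and the extra $\mm$-averaging, the ``diagonal'' $m = 0$ frequency is the only one contributing a main term and that it assembles into the clean Riemann-sum limit $\int_0^\infty (\cdots) \, dt$ via $\int_0^\infty g(x)\,dx = 1$, while all $m \neq 0$ frequencies (and all $l \neq l'$ contributions) are negligible to order $X^{-M}$ by smoothness. Concretely this requires showing $\sum_{a\le Y}\varphi(a)/a^2 = \frac{6}{\pi^2}\log Y + \text{const} + O(Y^{-1})$-type asymptotics interact correctly with the smooth weight so that the $\log$ does not appear in the final constant --- the reason it cancels is that the weight is not a sharp cutoff but the smooth $g$-profile, so the relevant sum is really $\sum_a \frac{\varphi(a)}{a^2} W(a/\sqrt X)$ for smooth compactly-supported-away-from-$0$ $W$, whose asymptotic is $\big(\sum_a \frac{\varphi(a)}{a^2}\big) \cdot (\text{leading}) = \frac{\zeta(1)}{\zeta(2)}$-type divergence that is instead correctly handled by recognizing the double integral directly. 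I would therefore organize the computation so as to pass to the integral $\int_0^\infty\int_0^\infty$ \emph{before} summing over $a$, making the $a$-sum a convergent Mellin-type sum whose value is exactly the stated constant, and relegate the error analysis (the $m \neq 0$ and off-diagonal $l\neq l'$ pieces) to the standard integration-by-parts argument used elsewhere in the paper.
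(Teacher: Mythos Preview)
Your overall strategy---separate the two families, recognize the inner $y$-sum as a Ramanujan sum, and split into the cases $l=l'$ and $l\neq l'$---is exactly right and matches the paper. However, there is a genuine gap in your error analysis, and it concerns precisely the point you flag as ``the main obstacle.''

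You propose to kill the $l\neq l'$ contribution and the error terms by ``repeated integration by parts in $t$.'' But for $A_{0,X}$ the relevant integral is $I_{\mm}(0,c,X)=\int g(\mm^2 t)V_1(4\pi\sqrt{Xlt}/c)V_2(4\pi\sqrt{Xl't}/c)\,dt$, which carries \emph{no oscillatory factor} (the exponential $e(Xtn/\mathbf N(\delta c))$ vanished when $n=0$). Integration by parts in $t$ therefore gains nothing. A direct real-variable estimate on $\sum_a c_a(l-l')\,W(a/\sqrt X)/a^2$ using only $\sum_{a\le Y}c_a(m)=O(1)$ will give you at best a fixed power saving, not $O(X^{-M})$ for arbitrary $M$.

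What the paper does instead---and what you gesture toward in your last paragraph but do not carry out---is take the Mellin transform in the $(a,\mm)$ variables. Writing the Dirichlet series $L(s)=\sum_a f_a((l-l')/\delta)a^{-s}$ of Ramanujan sums, one has $L(s)=\zeta(s-1)/\zeta(s)$ when $l=l'$ and $L(s)=\zeta(s)^{-1}\prod_{p\mid (l-l')}(1+p^{1-s})$ otherwise. The \emph{crucial} role of the extra $\mm$-average is that it contributes an exact factor of $\zeta(s)$, which cancels the $\zeta(s)^{-1}$ in $L(s)$. After this cancellation the integrand is $\zeta(s-1)$ (a single simple pole at $s=2$, giving the main term) when $l=l'$, and a finite Euler product (entire) when $l\neq l'$. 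Now the contour can be pushed to $\Re s=-M$ with impunity, yielding $O(X^{-M})$. Without this $\zeta$-cancellation the zeros of $\zeta(s)$ would obstruct the shift and the error would be limited by the zero-free region---this is precisely why the paper introduced the $\mm$-sum in the first place, and your proposal does not exploit it. The evaluation of the residue as $\hat H(1,1)$ and its identification with $\int\!\!\int V_1(x)V_2(y)\,\frac{dx\,dy}{xy}$ via the change of variables you describe (using $\int g=1$) then goes through as you indicate.
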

\begin{proof}
 
Consider the case $l'\neq l.$ Let $$H_{\mm}(x):=\frac{1}{x^2}\int_0^{\infty} g(\mm^2 t)V_1(\frac{4\pi\sqrt{lt}}{x})V_2(\frac{4\pi\sqrt{l't}}{x})dt,$$ then notice $$H_{\mm}(x)=H_{1}(\mm x).$$ For economy, we use $H(x)$ for $H_{1}(x).$
 
We then have \begin{multline}\label{eq:c08}  \sum_{\mm}   \sum_{a =1}^\infty \frac{1}{a^2}
  \mu\left(\frac{a}{(z_l,a)}\right)\frac{\phi(a)}{\phi(\frac{a}{(z_l,a)})}
H(\frac{ja}{\sqrt{X}}). \end{multline}
 
  Let $\hat{H}(s):= \int_0^\infty H(x) x^{s-1}dx.$ Then by the smoothness of $H,$ which is inherited from $V_1,V_2,$ and Mellin inversion, the sum over  $a$ in \eqref{eq:c0} equals \begin{equation}\label{eq:hmt} \sum_{\mm}   \frac{1}{2\pi i} \int_{(\sigma)}\hat{H}(s)  \left(\frac{\sqrt{X}}{\mm }\right) ^{s} L(s) ds,\end{equation}  for $\sigma>0$ sufficiently large.
Here $$L(s):=\sum_{a=1}^\infty \frac{f_{ a} (z_l)}{a^{s}},$$where $f_n(y)=\sum_{x(n)^{*}} e\left(\frac{xy}{ n} \right)$ is the classical Ramanujan sum. 

 We have \begin{equation} \label{eq:lfn} L(s)=\frac{\sigma_{s-1}(z_l)}{z_l^{s-1}\zeta(s)}\end{equation} from \cite{IK}. Here $\sigma_x(y)=\sum_{d|y} d^x, d,y \in \ri.$

 If $l'=l,$ completely analogous to the previous case, the Dirichlet series in question is $$L(s)=\sum_{a=1}^\infty \frac{\phi(a)}{a^{s}}=\frac{\zeta(s-1)}{\zeta(s)}.$$ 

So we have after recollecting terms that \eqref{eq:hmt} equals

\begin{equation}\label{eq:ss}
 \frac{1}{X2\pi i} \int_{(\sigma)}\hat{H}(s) \left(\sqrt{X}\right) ^{s} L(s)\zeta(s) ds.
\end{equation}

In the case of $l'=l,$ the poles of $L(s)=\frac{\zeta(s-1)}{\zeta(s)},$ outside of $s=2,$ are at the zeroes of $\zeta(s).$ However these poles coming from the zeroes are exactly removed by our extra summation over $\mm \in \Z.$ So the only pole from \eqref{eq:ss} is at $s=2.$ Now using contour integration we shift $\Re(\sigma) \to -2M, M>0,$ and get \begin{equation} \label{eq:mt} \hat{H}(2) + 
 \frac{1}{X}\left(\frac{1}{2\pi i}\right) \int_{(-2M)}\hat{H}(s)  \left(\sqrt{X}\right) ^{s} \zeta(s-1) ds.\end{equation}

So using trivial bounds on the integral, $$
\hat{H}(2) +O(X^{-M}).$$

Similar analysis can be done for $l' \neq l,$ here however we get no pole from the $L$-function \eqref{eq:lfn} and the term here is $O (X^{-M}).$  

%For the $b$-sum we only note here $$\sum_{b=1} \frac{\phi(Db)}{(Db)^{2s}}=\frac{D}{D^{2s}}\sum_{b=1}\frac{\phi(b)}{b^{2s}}.$$

%The same argument  in \eqref{eq:c0}  gives $\frac{X}{2D}\hat{H}(1) + O(X^{-M}).$ 

Therefore \eqref{eq:c0} equals $$ \hat{H}(2) +O(X^{-M})$$ if $l'=l,$ and $O(X^{-M})$ else.
We note in the former case
$$\hat{H}(2)=\int_0^{\infty}\int_0^{\infty}\int_0^{\infty} g(t)V_1(\frac{\sqrt{tl}}{x})V_2(\frac{\sqrt{tl}}{x})dt\frac{dx}{x}.$$ 
A change of variables $x \to \frac{\sqrt{tl}}{ x}$ and the use of $\int_0^\infty g(x)dx=1,$ gives Proposition \ref{m000}.
\end{proof}

\subsection{Evaluating $A_{n,X}, n \neq 0$}
\label{sec:casen}

 Fix $r,$ then by Proposition \ref{bijection}, $X_n(r)$ is the
set of  $c$ such that, setting $d=(c,c')$, we have
 \begin{enumerate}
 \item  $\frac{ c}{d}, \frac{c'}{d}$ are both prime to $\frac{n}{d\delta}.$

 \item $\frac{\delta  cr+\delta'c'}{d}\equiv 0\pmod{\frac{n}d}$
 \item
 $\frac{\delta  cr+\delta'c'}{d}\not\equiv 0\pmod{\frac{n}{k}}$ if $k$ is a proper divisor of $d$.
 \end{enumerate}

 Now for each divisor $d$ of $n$,  let $X_n(r,d)$ be the set of pairs $c$ in $X_n(r)$ such that $(c,c')=d$.

 We would like to prove that there is a constant $R(n,d)$ such that
 \begin{equation}\label{eq:maincalc2}
 \sum_{\mm}\sum_{c\in X_n(r,d)} \frac{1}{\mathbb{N}(c)}
 e(\frac{-1}{n}(\frac{l c'}{ c }+\frac{l' c}{ c' })) I_{\mm}(n,c,X) = R(n,d)\frac1n\int_{0}^{\infty}
\int_{0}^{\infty} H_n(x,y)dxdy +O((nX)^{-M}),
\end{equation}
 and $$\sum_{d|n}R(n,d)=1.$$ 
  \begin{remark}We want the main term to be independent of $r$ as in the geometric side of the trace formula where the archimedean part does not depend on the $r$-sum inside of the Kloosterman sum.\end{remark}

 Let us describe $X_n(r,d)$ explicitly. If $c\in X_n(r,d)$, then there exists a $\lambda \in \mathcal{O_{\mathbf{K}}}$ such that
  \begin{equation}\label{eq:congr}
  \frac{\delta'c'}d = -\frac{\delta  c}d\,r + \lambda\frac{n}d.
  \end{equation}
Also recall from Proposition \ref{dche} that we only consider the case $d'=d$ for the bijection in Proposition \ref{bijection}.

\begin{definition}Let $a,b \in \mathcal{O_{\mathbf{K}}}$  with $c=(a,b).$ Define $(a,b)_{\Z}$ to be the largest rational integer dividing $c.$ In other words, we have $(a,b)_{\Z}|c$ with $(a,b)_{\Z}=(a,b)_{\Z}'$ and if $d=d'$ divides $c$ then $d|(a,b)_{\Z}.$
\end{definition}

  \begin{lemma}\label{lans} Fix  $c$ such that $d'=d,$ $d|c,$ and $r$ such that $r r' \equiv 1 (n).$  Let $\lambda \in \mathcal{O_{\mathbf{K}}}$ be defined by (\ref{eq:congr}). Then $c\in X_n(r,d)$ if and only if $(\lambda,c)_{\Z}=1$ and $(\frac{c}{d}, \frac{n}{d})_{\Z}=1.$  
  \end{lemma}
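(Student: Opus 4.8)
The plan is to peel off, one at a time, the three conditions in the description of $X_n(r,d)$ against the relation \eqref{eq:congr}. First observe that \eqref{eq:congr} is merely a restatement of condition~(2): solving it for $\lambda$ gives $\lambda=\frac{\delta c r+\delta' c'}{n}$, so the hypothesis ``$\lambda\in\mathcal O_{\mathbf K}$ defined by \eqref{eq:congr}'' is exactly the hypothesis that condition~(2) holds. Since we are also given $(c/d,c'/d)=1$ and $rr'\equiv1\pmod n$, what must be shown is that, granting~(2), conditions~(1) and~(3) hold simultaneously if and only if $(\lambda,d)_{\Z}=1$.

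The substance lies in condition~(3). Substituting $\frac{\delta c r+\delta' c'}{d}=\lambda\frac nd$, the requirement ``$\frac{\delta c r+\delta'c'}{d}\not\equiv0\pmod{\frac nk}$ for every proper divisor $k\mid d$'' becomes ``$\frac nk\nmid\lambda\frac nd$ for every proper $k\mid d$''. Cancelling the nonzero rational integer $\frac nd$ in the integral domain $\mathcal O_{\mathbf K}$, this reads $\frac dk\nmid\lambda$ for all proper $k\mid d$; it suffices to take $k=d/p$ for rational primes $p\mid d$, so this is equivalent to: no prime $p\mid d$ divides $\lambda$ in $\mathcal O_{\mathbf K}$, i.e.\ $(\lambda,d)_{\Z}=1$. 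Thus condition~(3) is literally the assertion $(\lambda,d)_{\Z}=1$.

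It remains to check that, under~(2) and $(c/d,c'/d)=1$, condition~(1) is automatic and so contributes nothing new. Suppose a prime $\mathfrak p$ of $\mathcal O_{\mathbf K}$ divided $c/d$ and also the quantity $n/(d\delta)$ appearing in condition~(1); then $\mathfrak p$ divides $n/d=\delta\cdot n/(d\delta)$ (the case $\mathfrak p\mid c'/d$ being symmetric: multiplying \eqref{eq:congr} by $r'$ and using $rr'\equiv1\pmod n$ yields the conjugate congruence $\frac{\delta c}{d}+\frac{\delta'c'}{d}r'\equiv0\pmod{\frac nd}$). Reducing \eqref{eq:congr} modulo $\mathfrak p$, both $\frac{\delta c}{d}r$ and $\lambda\frac nd$ vanish, forcing $\mathfrak p\mid\frac{\delta'c'}{d}$; if $\mathfrak p$ is unramified in $\mathbf K/\Q$ this gives $\mathfrak p\mid c'/d$, contradicting $(c/d,c'/d)=1$. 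The only remaining possibility is $\mathfrak p=(\sqrt D)$, which is where the delicacy sits: here one uses $D\mid n$ (the Lemma above) and must track the exact power of $\sqrt D$ in $d$, noting that $\sqrt D\mid c/d$ forces $\sqrt D\mid c'/d$ by conjugation and again collides with coprimality. Combining the three steps, $c\in X_n(r,d)$ iff~(3) holds iff $(\lambda,d)_{\Z}=1$. I expect this ramified-prime case of condition~(1) to be the main obstacle, together with making precise sense of ``$c/d$ prime to $n/(d\delta)$'' when $n/(d\delta)\notin\mathcal O_{\mathbf K}$, since $\delta=\sqrt D$ is a non-unit while the gcd $d$ is taken in $\Z$ and the arithmetic happens in $\mathcal O_{\mathbf K}$.
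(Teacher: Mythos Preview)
Your proof is correct and follows essentially the same three-step strategy as the paper: identify \eqref{eq:congr} with condition~(2), show condition~(1) is forced by~(2) and $(c/d,c'/d)=1$, and show condition~(3) is equivalent to the coprimality of $\lambda$ and $d$.

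There is one small structural difference worth noting. After reducing condition~(3) to ``no prime divisor of $d$ divides $\lambda$ in $\mathcal O_{\mathbf K}$'', the paper inserts an extra step to pass from $\mathcal O_{\mathbf K}$-coprimality to the rational condition $(\lambda,d)_{\Z}=1$: it conjugates \eqref{eq:congr} to obtain $\lambda'=r'\lambda+\delta c\,q$ for some $q\in\mathcal O_{\mathbf K}$, and uses this to rule out the possibility that a split prime $\mathfrak p\mid d$ divides $\lambda$ while its conjugate $\mathfrak p'$ does not. You bypass this by observing that the divisors $k$ in condition~(3) are rational, so only rational primes $p\mid d$ need be tested; this is cleaner, though it does lean on reading the ``$k<d$'' in the definition of $Y(c,n)$ as a statement about rational divisors. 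Conversely, your treatment of condition~(1) is more careful than the paper's: you track the stray factor $\delta'$ and dispose of the ramified prime by the conjugation observation $\sqrt D\mid c/d\Rightarrow\sqrt D\mid c'/d$, whereas the paper simply asserts $p\mid c'/d$ without comment on $\delta'$.

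Two cosmetic points: your cancellation of $n/d$ works because it is a nonzero element of the integral domain $\mathcal O_{\mathbf K}$, not because it is a rational integer (it need not be, e.g.\ when $d=-d'$); and your closing paragraph hedges about obstacles you have in fact already overcome, so you can simply drop it.
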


  \begin{proof}

Let us assume $(\lambda,c)_{\Z}=1$ and $(\frac{c}{d},\frac{n}{d})_{\Z}=1.$
If $(\frac{c}{d}, \frac{c'}{d})=p \neq1,$ then $p' = p$ as otherwise $(\frac{c}{d}, \frac{c'}{d})>p.$  But with \eqref{eq:congr},  $(\frac{c}{d}, \frac{n}{d})_{\Z}=1,$ and $(\lambda,\frac{c}{d})_{\Z}=1,$ there is a contradiction.

If $(\frac{c}{d},\frac{n}{d})=p, p' \neq p,$ then $p | c',$ or $p' |c.$ This implies $p'p |(\frac{c}{d},\frac{n}{d})$ but this contradicts $(\frac{c}{d}, \frac{n}{d})_{\Z}=1.$ Clearly if $p'=p$ there is a contradiction. And if $p$ divides $c'/d$ and
$n/d$, then \eqref{eq:congr} gives $p|r(c/d)$. But $(r,n)=1$, so
this implies that $p$ divides $c/d.$ As $p| (\frac{c}{d}, \frac{c'}{d})$ by a previous argument $p'=p$ which contradicts $(\frac{c}{d},\frac{n}{d})_{\Z}=1.$ This fulfills $(1)$ of the definition of $X_n(r,d).$

If $d=1$, the requirements are met. If $d\neq1$, we must also require that
\begin{equation}\label{eq:repeat}
 \frac{\delta  cr+\delta' c'}d\not\equiv 0 \pmod{p\frac{n}d}
 \end{equation}
 for all $p|d$. Suppose we take $p|(d,\lambda) , p' \neq p.$ Then writing $\lambda=pq$ we have $$
  \frac{\delta  dcr+\delta'dc'}{n} = \lambda=pq.$$ Taking the conjugate of this equation and multiplying by 
  $r'$ and using $rr' =1+ln, l \in \Z$ we get the equality $$pq=p'q'r'-\delta'dc'l.$$ As $p|d$ and 
  $(p,rp')=1$ we have $p| \lambda'$ or $p' | \lambda.$ This implies as $d=d'$ that  $pp'|d
  $ and hence $pp'|(d,\lambda).$ This contradicts that $(dc,\lambda)_{\Z}=1.$

% But
% $$
%  \frac{\delta  cr+\delta'c'}d = \lambda\frac{n}{d}.
% $$
%Therefore \eqref{eq:repeat} holds if and only if $\lambda\not\equiv
%0\pmod{p}$ for all $p|d$. In other words, $\lambda$ must be
%relatively prime to $d$ in the ring of integers of $\K.$

%However, suppose $p \in \ri $ such that $p \neq p', $

% But  as $d=(c,c')$ it suffices to check the gcd of $\lambda$ and $d$ in the rational integers as $d'=d.$ 

Now the other direction. If $c\in X_n(r,d)$ then by definition $(\frac{c}{d},\frac{n}{d})=(\frac{c}{d},\frac{c'}{d})=1.$ So certainly $(\frac{c}{d},\frac{n}{d})_{\Z}=1.$
If $(\lambda, \frac{c}{d})_{\Z}=p>1$ then this immediately contradicts $(\frac{c}{d},\frac{c'}{d})=1.$   Thus we must only show $(\lambda,d)_{\Z}=1.$ By definition of $X_n(r,d)$  \begin{equation}\label{eq:repeat}
 \frac{\delta  cr+\delta' c'}d\not\equiv 0 \pmod{p\frac{n}d}
 \end{equation}
 for all $p|d.$ This implies $(\lambda, d)=1.$ Therefore, we have $(\lambda, d)_{\Z}=1.$
\end{proof}

\begin{lemma}\label{splitcd}
 Let $\lambda \in \mathcal{O_{\mathbf{K}}}$ be defined by \eqref{eq:congr} such that $(\lambda,c)_{\Z}=1$ and $(\frac{c}{d}, \frac{n}{d})_{\Z}=1.$ Then $(d,\frac{c}{d})_{\Z}=1.$
\end{lemma}
\begin{proof}
Suppose $(d,\frac{c}{d})_{\Z}=p>1.$ By \eqref{eq:congr} we have $$\frac{\delta c r}{d}+\frac{\delta' c' }{d}=\frac{n\lambda}{d}.$$ By conjugation, we also have $(d,\frac{c'}{d})_{\Z}=p>1$ which via \eqref{eq:congr} is a contradiction with $(\frac{c}{d}, \frac{n}{d})_{\Z}=1$ and $(\lambda,c)_{\Z}=1.$
\end{proof}

For convenience, we replace the  $c$ with $dc.$ Then using Lemma \ref{lans} the left-hand side of
  \eqref{eq:maincalc2}  equal to
 \begin{equation}\label{eq:maincalc3}
 \frac{1}{X} \sum_{\mm} \sum_{\substack{c \in \ri, c \neq 0\\ (c,\frac{n}{d})=1\\    cr-c' = \lambda\frac{n}{d\delta}\\(\lambda,dc)_{\Z}=1}} \frac{e(\frac{-1}{n}(\frac{cl'}{c'}+\frac{c'l}{c}))}{\N(dc)}  I_{\mm}(n, \frac{d c}{\sqrt{X}},1).
\end{equation}

%In order to get an asymptotic for the $c$-sum, we need to perform Poisson summation. The following lemma simplifies the $c$-sum, a sum over integers of the field $\K$ into two sums over the rational integers. We then can perform Poisson summation over $\Z$ twice as we do in \cite{H}.

%\begin{lemma}
%Let $n \in \Z$ and $r r' \equiv 1 (n).$ Suppose for $f|d, f \in \Z ,$ and $d \in \ri, d|n,$ there exists $c \in \ri, (c,c')=1$ such that \begin{equation}\label{eq:zeq} cr-c' \equiv 0  (\frac{nf}{d\delta})\end{equation}. Then $c \equiv a  (\frac{nf}{d\delta}) , a \in \Z$ or $a \in \sqrt{D}\cdot \Z.$

%\end{lemma}

%\begin{proof}
%Suppose that such a $c$ exists satisfying the above equality with $c' \neq \pm c. $ We show there exists $y \in \ri, y' \neq \pm y$ such that $ycr-(yc)' \equiv 0  (\frac{nf}{d\delta}).$ Suppose there does not exist such a $y,$ then there exists a $t \in \ri, t \neq t',$ \begin{equation} \label{eq:zeq1} tcr-(tc)' \equiv \alpha  (\frac{nf}{d\delta})\end{equation} with $\alpha \not \equiv 0 (\frac{nf}{d\delta}).$ Multiplying \eqref{eq:zeq} by $\beta \in \ri ,\beta \not\equiv  0 (\frac{nf}{d\delta})$ and subtracting from \eqref{eq:zeq1} gives  $$ [tc-\beta c]r-[t'c'-\beta c'] \equiv \alpha  (\frac{nf}{d\delta}).$$
%Choose $\beta=t'$ then the previous equivalence implies $$[t-t']cr \equiv \alpha  (\frac{nf}{d\delta}).$$

%\end{proof}

\begin{prop}\label{limitcalc}
 Recalling $H_{n,j}(x,y)$ from Definition \ref{hnn}, for any integer $M \geq 0$
\begin{enumerate}
\item \begin{multline}\label{eq:bes} \frac{1}{X} \sum_{\mm} \sum_{\substack{c \in \ri, c \neq 0\\ (c,\frac{n}{d})=1\\    cr-c' = \lambda\frac{n}{d\delta}\\(\lambda,dc)_{\Z}=1}} \frac{e(\frac{-1}{n}(\frac{cl'}{c'}+\frac{c'l}{c}))}{\N(dc)}  I_{\mm}(n, \frac{d c}{\sqrt{X}},1)=\\ \frac{1}{ n }R(n,d) \int_{0}^{\infty}  \int_{0}^{\infty} H_{n,1}(x, y)dxdy +O((nX)^{-M}).\end{multline}

\item $\sum_{d|n} R(n,d)=1$
\end{enumerate}

\end{prop}

Combining the two parts of the above proposition we get

\begin{cor}\label{cor12}
For $\epsilon >0,$ $$A_{n,X}=\frac{1}{n}\sum_{d|n} R(n,d) \int_{0}^{\infty}  \int_{0}^{\infty} H_{n,1}(x, y)dxdy +O((nX)^{-M})=$$ $$ \frac{1}{n}\int_{0}^{\infty}  \int_{0}^{\infty} H_{n,1}(x, y)dxdy +O((nX)^{-M}).$$ 
\end{cor}

\begin{proof}[Proof of Corollary \ref{cor12}]
Immediate after Proposition \ref{limitcalc}.
\end{proof}

\begin{proof}

  Let us fix an $\mm$ for the moment. As $(c,d)_{\Z}=1$ by Lemma \ref{splitcd} the condition $(\lambda,dc)_{\Z}=1$ implies $(\lambda,c)_{\Z}=1$ and $(\lambda,d)_{\Z}=1.$ We remove the condition $(\lambda,d)_{\Z}=1$ with Mobius inversion. This gives 
$$\frac{1}{X} \sum_{\substack{c \in \ri, c \neq 0\\ (c,\frac{n}{d})=1\\ cr-c' = \lambda\frac{n}{d\delta}\\  (\lambda, c)_{\Z}=1}} \sum_{\substack{w|d\\ w|\lambda \\  w \in \N}} \mu(w)  \frac{e(\frac{-1}{n}(\frac{cl'}{c'}+\frac{c'l}{c}))}{\N(dc)}  I_{\mm}(n, \frac{d c}{\sqrt{X}},1).$$

%Write $r=x+y\sqrt{D}.$ Let us assume $y \not\equiv 0 (\frac{nw}{d}).$ Inverting sums and using Lemma \ref{las}, this is equal to 

%$$\frac{1}{X}\sum_{\substack{w|d, w \in \N}} \mu(w)   \sum_{\substack{c=a+b\sqrt{D} \in \ri-\{0\} \\ (a,\frac{nw}{d})_{\Z}=1\\ (a, b)_{\Z}=1\\ b \equiv -a\frac{(x-1)}{k}\overline{\frac{Dy}{k}} (\frac{nw}{\delta kd})}}  \frac{e(\frac{-1}{n}(\frac{cl'}{c'}+\frac{c'l}{c}))}{\N(dc)}  I_{\mm}(n, \frac{d c}{\sqrt{X}},1).$$

%Let $E':=kcr-kc'.$

Let $$F_{n,\mm}(x):=\frac{e(\frac{-1}{n}(\frac{xl'}{x'}+\frac{x'l}{x})}{xx'}  I_{\mm}(n,  x,1).$$

Using the compact support of $V_1,V_2, g,$ it is straightforward to check that \begin{equation}\label{eq:parr}\frac{\partial F_{n,\mm}(x)}{\partial x}, \frac{\partial F_{n,\mm}(x)}{\partial x'} \ll 1.\end{equation}

Also, similar to [\cite{H}, $(8.14)$] by an integration by parts in the $t$-integral $Q$-times gives \begin{equation}\label{eq:esw} F_{n,\mm}(x) \ll \frac{1}{n^Q}. \end{equation}

 To study the left hand side of \eqref{eq:bes}, it is now equivalent to investigate the sum $$ \frac{1}{X} \sum_{\substack{w|d\\  \\  w \in \N}} \mu(w) \sum_{\substack{c\in \ri \\ (c,\frac{n}{d})_{\Z}=1\\ cr-c' = w\lambda\frac{n}{d\delta}\\  (w\lambda, c)_{\Z}=1}} F_{n,\mm}(\frac{dc}{\sqrt{X}}).$$
 
% Rewritten this is $$ \sum_{\substack{w|d  \\  w \in \N}} \mu(w) \sum_{\substack{c, \lambda \in \ri \\ (c,\frac{n}{d})_{\Z}=1 \\ cr-c' = w\lambda\frac{n}{d\delta} \\ (w\lambda, c)_{\Z}=1}} F_{n,\mm}(\frac{dc}{\sqrt{X}}).$$

We aim to perform Poisson summation in $c.$  First, however, we have to remove the arithmetic condition $(\lambda, c)_{\Z}=1$ via Mobius inversion:
$$\frac{1}{X} \sum_{\substack{w|d  \\  w \in \N}} \mu(w) \sum_{\substack{c \in \ri \\ (c,\frac{nw}{d})_{\Z}=1\\ cr-c' = w\lambda\frac{n}{d\delta}}}  \sum_{\substack{ l |c \\ l |\lambda\\ l \in \N}} \mu(l) F_{n,\mm}(\frac{dc}{\sqrt{X}}).$$ Here we also combined the two conditions $(c,\frac{nw}{d})_{\Z}=1$ and $(c,w)_{\Z}=1$ into $ (c,\frac{nw}{d})_{\Z}=1.$

Inverting sums this equals 
$$ \frac{1}{X}\sum_{\substack{w|d  \\  w \in \N}} \mu(w)  \sum_{\substack{ l=1\\  (l, \frac{nw}{d})=1}}^\infty \mu(l) \sum_{\substack{c \in \ri \\ (c,\frac{nw}{d})_{\Z}=1\\ cr-c' =\frac{nw}{d\delta}\lambda}}   F_{n,\mm}(\frac{dlc}{\sqrt{X}}).$$

%This gives $$\sum_{\substack{w|d, w \in \N}} \mu(w)   \sum_{\substack{(a,b) \in \Z^2\\ b \equiv -a\frac{(x-1)}{k}\overline{\frac{Dy}{k}} (\frac{nw}{\delta kd})}} \sum_{l|(a,b)_{\Z}} \mu(k) \sum_{t|(a,\frac{nw}{d})_{\Z}} \mu(t)  F_{n,\mm}(\frac{d a}{\sqrt{X}},\frac{d b}{\sqrt{X}}). $$

%Inverting sums this equals $$  \sum_{\substack{w|d, w \in \N}} \mu(w) \sum_{\substack{l=1\\ (l,\frac{nw}{d})=1}}^\infty \mu(l) \sum_{t|\frac{nw}{d}} \mu(t) \sum_{\substack{(a,b) \in \Z^2\\ b \equiv -ta\frac{(x-1)}{k}\overline{\frac{Dy}{k}} (\frac{nw}{\delta kd})}} F_{n,\mm}(\frac{d tla}{\sqrt{X}},\frac{d lb}{\sqrt{X}}). $$

From the definition of $ I_{\mm}(n,y,X)$ it is clear by a change of variables in the $t$-integral 
$$F_{n,\mm}(x)=F_{n,1}(\mm x).$$ So including the $\mm$-sum from our starting Proposition \ref{limitcalc} we have \begin{equation}\label{eq:wwi}\frac{1}{X} \sum_{\mm=1}^\infty \sum_{\substack{w|d, w \in 
\N}} \mu(w)   \sum_{\substack{ l=1\\  (l, \frac{nw}{d})=1}}^\infty \mu(l) \sum_{\substack{c \in \ri \\ (c,\frac{nw}{d})_{\Z}=1\\ cr-c'  \equiv 0 (\frac{nw}{d\delta}) }}   F_{n,1}(\frac{\mm dlc}{\sqrt{X}}). \end{equation}

Our aim now is to remove the $\mm, l$-sums. Notice $V_1, V_2,g$ inside of $F_{n,1}$ are of compact support therefore the $\mm,c,l$-sums are of compact support. Let $h(y):=F_{n,1}(\frac{ydc
 }{\sqrt{X}}).$ Interchanging the $\mm, l$ with the $c$ sums we study $$\sum_{\mm}  \sum_{\substack{l=1 \\ (l,\frac{nw}{d})=1}}^\infty \mu(l) h(lj).$$ Via Mellin inversion this equals $$\frac{1}{2\pi i} \int_{\Re(s)=4} \hat{h}(s) \prod_{p|\frac{nw}{d}} \frac{(1-\frac{1}{p^{s}})^{-1} \zeta(s)}{\zeta(s)}ds=\prod_{p|\frac{nw}{d}} \sum_{l=0}^\infty h(p^l).$$

Going to back to $F_{n,1}$ we are left focusing on \begin{equation}\label{eq:wwi2}  \frac{1}{X}\sum_{\substack{w|d, w \in \N}} \mu(w) 
 \sum_{\substack{c \in \ri \\ (c,\frac{nw}{d})_{\Z}=1\\ cr-c'  \equiv 0 (\frac{nw}{d\delta})}} \prod_{p|\frac{nw}{d}}   \sum_{l=0}^\infty   F_{n,1}(\frac{ dp^l c}{\sqrt{X}}). \end{equation}

\begin{lemma}\label{hl}
There exists $s \in \ri$ such that $(s,n)=1$ and $r \equiv s \overline{s'} \mod (n).$

\end{lemma}

\begin{proof}
This is a variant of Hilbert's Theorem $90.$ Let $s=c+rc'$ for some $c \in \ri , c \neq 0.$ Then $r s' \equiv s \mod (n).$ For $d \not\equiv c \mod (n)$ and $m=d+rd',$ by a linear independence of characters argument, $s \not\equiv m \mod (n).$ Therefore, there must exist a $c \mod (n), (c+rc', n)=1,$ else this contradicts the elementary result that $\phi(n) \gg \sqrt{n}.$ 

\end{proof}

By Lemma \ref{hl} we can write the previous sum as \begin{equation}\label{eq:wwi3}  \frac{1}{X}\sum_{\substack{w|d, w \in \N}} \mu(w) 
 \sum_{\substack{c \in \ri \\ (c,\frac{nw}{d})_{\Z}=1\\ c\overline{c'}s\overline{s'}  \equiv 1 (\frac{nw}{d\delta})}} \prod_{p|\frac{nw}{d}}   \sum_{l=0}^\infty   F_{n,1}(\frac{ dp^l c}{\sqrt{X}}). \end{equation}

We use the subgroup of Dirichlet characters $\psi$ modulo $(\frac{nw}{d})$ satisfying $$\psi \cdot \overline{\psi'} \equiv 1 \mod (\frac{nw}{d})$$ to detect the congruence condition $c\overline{c'}s\overline{s'}  \equiv 1 (\frac{nw}{d\delta}).$ These characters are in bijective correspondence with characters $\Psi \mod (n), \Psi(\N(\cdot)) \equiv 1 \mod (\frac{nw}{d}).$ There are $\phi(\frac{nw}{d})_{\Z}:=\#\{x \mod \frac{nw}{d} | x\in \Z, (x,\frac{nw}{d})_{\Z}=1\}$ of these characters as they come from characters of $(\Z \slash \frac{nw}{d} \Z)^{*}$ composed with the norm map. After interchanging the $l$-(associated to the geometric series) and $c$-sums this gives \begin{equation}\label{eq:wwi4} \frac{1}{X} \sum_{\substack{w|d, w \in \N}} \mu(w) 
 \prod_{p|\frac{nw}{d}}   \sum_{l=0}^\infty  \sum_{\substack{c \in \ri \\ (c,\frac{nw}{d})_{\Z}=1}}    \frac{1}{\phi(\frac{nw}{d})_{\Z}} \sum_{\substack{\psi \mod (\frac{nw}{d})\\ \psi \cdot \overline{\psi'} \equiv 1 \mod (\frac{nw}{d})}} \psi(cs \overline{(cs)'}) F_{n,1}(\frac{ dp^l c}{\sqrt{X}}). \end{equation}

The main contribution comes from the principal character $\psi_0 \mod (\frac{nw}{ d}).$ We label the term with the principal character as $T^{M}(n,d)$ and the rest of the characters as  $T^{R}(n,d).$ Therefore, \eqref{eq:wwi4} equals $T^{M}(n,d) +T^{R}(n,d).$

\subsubsection{Analysis of $T^{M}(n,d)$}

This term, where we now write $\psi_0 \equiv 1 \mod (\frac{nw}{d})$ is equal to 
\begin{equation}\label{eq:bbn} \frac{1}{X} \sum_{\substack{w|d, w \in \N}}  \frac{\mu(w)}{\phi(\frac{nw}{d})_{\Z}}
 \prod_{p|\frac{nw}{d}}   \sum_{l=0}^\infty  \sum_{\substack{c \in \ri \\ (c,\frac{nw}{d})_{\Z}=1}}     F_{n,1}(\frac{ dp^l c}{\sqrt{X}}). \end{equation}
 
 Write $c=a+\sqrt{D}b$ with $a,b \in \Z.$ The condition $(c,\frac{nw}{d})_{\Z}=1$ can be written as sum of sums with conditions $(a,\frac{nw}{d})_{\Z}=1,(b,\frac{nw}{d})_{\Z}=1 $ minus $(ab,\frac{nw}{d})_{\Z}=1.$ More specifically,  \begin{equation}\label{eq:sts}  \sum_{\substack{c \in \ri \\ (c,\frac{nw}{d})_{\Z}=1}}     F_{n,1}(\frac{ dp^l c}{\sqrt{X}})= \sum_{\substack{c=a+\sqrt{D} b\in \ri \\ (a,\frac{nw}{d})_{\Z}=1}}     F_{n,1}(\frac{ dp^l c}{\sqrt{X}}) + \sum_{\substack{c=a+\sqrt{D} b\in \ri \\ (b,\frac{nw}{d})_{\Z}=1}}     F_{n,1}(\frac{ dp^l c}{\sqrt{X}}) - \sum_{\substack{c=a+\sqrt{D} b\in \ri \\ (ab,\frac{nw}{d})_{\Z}=1}}     F_{n,1}(\frac{ dp^l c}{\sqrt{X}}).\end{equation}

 From each of the three above sums, we can remove the condition $(\cdot,\frac{nw}{d})_{\Z}=1$ again by Mobius inversion. We consider only the first sum on the RHS of \eqref{eq:sts} with the other two sums being analogous. We have  \begin{equation}\label{eq:azz}  \frac{1}{X} \sum_{\substack{w|d, w \in \N}}  \frac{\mu(w)}{\phi(\frac{nw}{d})_{\Z}}
 \prod_{p|\frac{nw}{d}}   \sum_{l=0}^\infty \sum_{t|\frac{nw}{d}, t \in \N} \mu(t) \sum_{c \in \ri}     F_{n,1}(\frac{ dp^l (ta+\sqrt{D}b)}{\sqrt{X}}). \end{equation}

%Interchanging the $l$ and $c$-sums, we apply Poisson summation to the $c$-sum. We get after a standard change of variables the $\lambda$-sum equal to \begin{multline} 
%\sum_{\lambda \in \ri} F_{n,1}(\frac{ dp^l c}{\sqrt{X}},\frac{ dp^l(cr-w\lambda\frac{n}{d\delta})}{\sqrt{X}})= \\ \frac{\delta \sqrt{X}}{p^l w n} \sum_{m_1 \in \ri} e(\T(\frac{cr}{\frac{nw}{\delta d}})) \int_{-\infty}^\infty F_{n,1}(\frac{d kt
%\mm a}{\sqrt{X}},t_1)e(\frac{-m_1\sqrt{X} t_1}{\frac{p^k n}{d \delta}})dt_1. \end{multline} 

Poisson summation  and a change of variables gives $$\sum_{c=a+\sqrt{D}b \in \ri}     F_{n,1}(\frac{ dp^l (ta+\sqrt{D}b)}{\sqrt{X}})=$$  $$\frac{X}{ d^2p^{2l}t} \sum_{m_1 \in \ri} \int_{-\infty}^{\infty}  F_{n,1}(t_1+\sqrt{D}t_2)e(-\T(\frac{\sqrt{X}m_1(t_1+\sqrt{D}tt_2)}{tp^ld}))dt_1dt_2.$$ We apply integration by parts to the $t_1$-integral. Recall from Section \ref{sec:howto} $n \ll X^{\epsilon}$ for any  $\epsilon >0.$ Using the estimate \eqref{eq:parr} and applying integration by parts $Q$-times the integral for $m_1 \neq 0$ is bounded by $$ \int_{-\infty}^{\infty} \int_{-\infty}^\infty F_{n,1}(t_1+\sqrt{D}t_2)e(-\T(\frac{\sqrt{X}m_1(t_1+\sqrt{D}t_2)}{p^ld}))dt_1dt_2 \ll (\frac{n^{l+1}}{m_1 \sqrt{X}})^{Q} \ll X^{-Q/2+\epsilon}m_1^{-Q}.$$ The power of $X$ in this way can be made smaller than any $-M, M>0.$

So we consider only the $m_1=0$ term for now on. The terms with non-zero $m_1$ using the estimate from integration by parts is $O(X^{-M})$ for any $M>0.$ This $m_1=0$ term is
 \begin{equation}\label{eq:wwi3} \frac{1}{d^2} \sum_{\substack{w|d, w \in \N}}  \frac{\mu(w)}{\phi(\frac{nw}{d})_{\Z}}  
\prod_{p|\frac{nw}{d}} \sum_{l=0}^\infty \frac{1}{p^{2l}}  \sum_{t|\frac{nw}{d}, t \in \N} \frac{\mu(t)}{t} \int_{-\infty}^\infty \int_{-\infty}^\infty F_{n,1}(t_1+\sqrt{D}t_2)dt_1dt_2= \end{equation}

$$ \frac{1}{d^2} \sum_{\substack{w|d, w \in \N}} \frac{\mu(w)}{\phi(\frac{nw}{d})_{\Z}}  
\prod_{p|\frac{nw}{d}} (1-\frac{1}{p^2})^{-1}  \sum_{t|\frac{nw}{d}, t \in \N} \frac{\mu(t)}{t} \int_{-\infty}^\infty \int_{-\infty}^\infty F_{n,1}(t_1+\sqrt{D}t_2)dt_1dt_2.$$

%By a completely analogous argument to the $k_1$-sum, we interchange the $a$- and $k$-sums  and perform Poisson summation on the $a$-sum giving  \eqref{eq:wwi3} equal to \begin{equation*}\label{eq:wwi4} \frac{\delta X}{dn} \sum_{\substack{w|d, w \in \N}} \frac{\mu(w)}{w}  \sum_{t|\frac{nw}{d}} \frac{\mu(t)}{t} 
%\prod_{p|\frac{nw}{d}}  \sum_{k=0}^\infty \frac{1}{p^{2k}} \sum_{m_2 \in \Z} \int_{-\infty}^\infty \int_{-\infty}^\infty F_{n,1}(t_2,t_1)e(\frac{-m_2 t_2}{dtp^k})dt_1. \end{equation*}

%Again $dtp^k \ll n^{k+2} \ll X^{\epsilon}$ and assuming $m_2 \neq 0,$ another integration by parts argument ($Q$-applications) gives the integral is bounded by $$ \int_{-\infty}^\infty F_{n,1}(t_2,t_1)e(\frac{-m_2 \sqrt{X} t_2}{dtp^k})dt_1 \ll (\frac{n^{3}}{m_2\sqrt{X}})^Q\ll X^{-Q/2+\epsilon}.$$

%From Proposition \ref{limitcalc} we are now left with \begin{equation*}\label{eq:wwi5} \frac{\delta }{dn} \sum_{\substack{w|d, w \in \N}} \frac{\mu(w)}{w}  \sum_{t|\frac{nw}{d}} \frac{\mu(t)}{t} 
%\prod_{p|\frac{nw}{d}} \sum_{k=0}^\infty \frac{1}{p^{2k}} \int_{-\infty}^\infty \int_{-\infty}^\infty F_{n,1}(t_2,t_1)dt_2dt_1+O(X^{-M}), \end{equation*} upon taking $M>\frac{Q+1}{2}>0.$

By using the estimate \eqref{eq:esw}, the error term can be improved to $O((nX)^{-M}).$

We now simplify the archimedean integral. By definition, the integral \begin{multline}\int_{-\infty}^\infty \int_{-\infty}^\infty F_{n,1}(t_1+\sqrt{D}t_2)dt_2dt_1= \int_{-\infty}^\infty \int_{-\infty}^\infty\frac{e(\frac{-1}{n}(\frac{(t_1+\sqrt{D}t_2)l'}{(t_1-\sqrt{D}t_2)}+\frac{(t_1-\sqrt{D}t_2)l}{(t_1+\sqrt{D}t_2)})}{\N(t_1+\sqrt{D}t_2)} \times \\  \int_{-\infty}^{\infty}
e(\frac{tn}{\mathbf{N}( \delta(t_1+t_2\sqrt{D}))})g( t)
 V_1(\frac{4\pi \sqrt{tl}}{t_1+t_2\sqrt{D}})  V_2(\frac{4\pi \sqrt{tl'}}{t_1-t_2\sqrt{D}})dt dt_2 dt_1.\end{multline}

With a change of variables $t_1+t_2\sqrt{D} \to x, t_1-t_2\sqrt{D} \to y$ this equals \begin{equation}  \int_{-\infty}^\infty \int_{-\infty}^\infty e(\frac{-1}{n}(\frac{xl'}{y}+\frac{yl}{x})) \int_{-\infty}^{\infty} e(\frac{tn}{-Dxy})g( t)
 V_1(\frac{4\pi \sqrt{tl}}{x})  V_2(\frac{4\pi \sqrt{tl'}}{y})dt \frac{dx dy}{xy}.\end{equation}
 
 We can separate the $t$-integral by the change of variables $x \to \sqrt{t}x, y \to \sqrt{t}y$ to get 
\begin{equation}\label{eq:sqs} \left[\int_{-\infty}^\infty g(t) dt\right] \int_{-\infty}^\infty \int_{-\infty}^\infty e(\frac{-1}{n}(\frac{xl'}{y}+\frac{yl}{x}))  e(\frac{n}{-Dxy})g( t)
 V_1(\frac{4\pi \sqrt{l}}{x})  V_2(\frac{4\pi \sqrt{l'}}{y})\frac{dx dy}{xy}.\end{equation}

Using the fact that we chose $g$ such that $\int_{-\infty}^\infty g(t) dt=1,$ \eqref{eq:sqs} equals $$ \int_{-\infty}^\infty \int_{-\infty}^\infty H_{n,1}(x,y)dx dy.$$

Collecting our above analysis together, \eqref{eq:azz} equals  \begin{equation}\label{eq:wwi6}  \frac{1}{d^2}\sum_{\substack{w|d, w \in \N}}  \frac{\mu(w)}{\phi(\frac{nw}{d})_{\Z}}    \sum_{t|\frac{nw}{d}} \frac{\mu(t)}{t} 
\sum_{p |\frac{nw}{d}} \frac{1}{(1-\frac{1}{p^2})} \int_{-\infty}^\infty \int_{-\infty}^\infty H_{n,1}(x,y)dx dy+O((nX)^{-M}),\end{equation} for $M>0.$

The other two sums by a completely analogous argument give the respective terms: \begin{multline*}  \frac{1}{d^2}\sum_{\substack{w|d, w \in \N}}  \frac{\mu(w)}{\phi(\frac{nw}{d})_{\Z}}    \sum_{s|\frac{nw}{d}} \frac{\mu(s)}{s} 
\sum_{p |\frac{nw}{d}} \frac{1}{(1-\frac{1}{p^2})} \int_{-\infty}^\infty \int_{-\infty}^\infty H_{n,1}(x,y)dx dy-\\
 \frac{1}{d^2} \sum_{\substack{w|d, w \in \N}}  \frac{\mu(w)}{\phi(\frac{nw}{d})_{\Z}}    \sum_{z|\frac{nw}{d}} \frac{\mu(z)}{z^2} 
\sum_{p |\frac{nw}{d}} \frac{1}{(1-\frac{1}{p^2})} \int_{-\infty}^\infty \int_{-\infty}^\infty H_{n,1}(x,y)dx dy
+O((nX)^{-M}).\end{multline*}

In summary, we have shown combining the three sums from \eqref{eq:azz} that \eqref{eq:bbn} equals \begin{equation}\label{eq:wwi7}   \frac{1}{d^2}\sum_{\substack{w|d, w \in \N}}  \frac{\mu(w)}{\phi(\frac{nw}{d})_{\Z}}    \sum_{p |\frac{nw}{d}} \frac{1}{(1-\frac{1}{p^2})} \prod_{p|\frac{nw}{d}} \left[2(1-\frac{1}{p}) -(1-\frac{1}{p^2})\right]
 \int_{-\infty}^\infty \int_{-\infty}^\infty H_{n,1}(x,y)dx dy+O((nX)^{-M}),\end{equation}

We have the simplification: \begin{multline*} R(n,d):= \frac{1}{d^2}\sum_{\substack{w|d, w \in \N}}  \frac{\mu(w)}{\phi(\frac{nw}{d})_{\Z}}    \sum_{p |\frac{nw}{d}} \frac{1}{(1-\frac{1}{p^2})} \prod_{p|\frac{nw}{d}} \left[2(1-\frac{1}{p}) -(1-\frac{1}{p^2})\right]=\\  \frac{1}{d^2} \sum_{\substack{w|d, w \in \N}}  \frac{\mu(w)}{\phi(\frac{nw}{d})_{\Z}}    \sum_{p |\frac{nw}{d}} \frac{1}{(1-\frac{1}{p^2})} \prod_{p|\frac{nw}{d}} (1-\frac{1}{p})^2=\\ \frac{1}{nd} \sum_{\substack{w|d, w \in \N}} \frac{\mu(w)}{w}  \sum_{p |\frac{nw}{d}} \frac{1}{(1+\frac{1}{p})}.\end{multline*}
The last equality using that $\phi(n)=n \prod_{p|n} (1-\frac{1}{p}).$  We have the even further simplification of $R(n,d)$ in the following lemma.
\begin{lemma}
We have $\sum_{d|n} R(n,d)=\frac{1}{n}.$
\end{lemma}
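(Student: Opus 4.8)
The plan is to work directly from the explicit formula \eqref{eq:evalu}, namely $R(n,d) = \frac{1}{dn}\sum_{a|d}\frac{\mu(a)}{a}\prod_{p|\frac{na}{d}}\frac{1}{1+1/p}$, and to prove the identity multiplicatively. Both sides, as functions of $n$ with the sum over $d|n$ carried out, are multiplicative in $n$: the factor $\frac{1}{n}$ is obviously multiplicative, and the inner double sum over $d|n$ and $a|d$ factors over the prime powers dividing $n$ because $\mu$, the Euler factor $\frac{1}{1+1/p}$, and the divisibility conditions all localize at each prime. So first I would reduce to the case $n = p^k$ a prime power, where the claim becomes $\sum_{j=0}^{k} R(p^k, p^j) = \frac{1}{p^k}$.

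Next I would compute $R(p^k,p^j)$ explicitly. For $d = p^j$ with $j \le k$, the divisors $a|d$ are $a = p^i$ for $0 \le i \le j$, and $\frac{na}{d} = p^{k+i-j}$. The key arithmetic point is that $\prod_{p'|\frac{na}{d}}\frac{1}{1+1/p'}$ equals $\frac{1}{1+1/p} = \frac{p}{p+1}$ when $k+i-j \ge 1$ and equals $1$ (empty product) when $k+i-j = 0$, i.e. when $i = j - k$; since $i \ge 0$ and $j \le k$, the empty-product case occurs only if $j = k$ and $i = 0$. Also $\mu(p^i)$ is nonzero only for $i = 0,1$. So for $j < k$ we get $R(p^k,p^j) = \frac{1}{p^{k+j}}\left(\frac{p}{p+1} - \frac{1}{p}\cdot\frac{p}{p+1}\right) = \frac{1}{p^{k+j}}\cdot\frac{p-1}{p+1}$ — wait, I must be careful: for $j=0$ there is no $a=p$ term, so $R(p^k,1) = \frac{1}{p^k}\cdot\frac{p}{p+1}$; and for $j = k$ the $i=0$ term contributes $1$ (empty product) rather than $\frac{p}{p+1}$. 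I would tabulate these three regimes ($j = 0$; $1 \le j \le k-1$; $j = k$) carefully.

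Finally I would sum the geometric-type series $\sum_{j=0}^k R(p^k,p^j)$ over these regimes and check it collapses (telescopes, after pulling out the common $\frac{1}{p^k}$ factor) to $\frac{1}{p^k}$. Concretely, writing $R(p^k,p^j) = \frac{1}{p^k}\cdot c_j$ with $c_j$ depending only on $j$ and $p$, the claim is $\sum_j c_j = 1$; the $\mu$ cancellation should make adjacent terms cancel in pairs, leaving only a boundary term equal to $1$. The main obstacle — really the only subtlety — is getting the edge cases right: the $j=0$ case where $a=p$ is not an allowed divisor, and the $j=k$, $i=0$ case where $\frac{na}{d}=1$ forces the Euler product to be empty rather than $\frac{p}{p+1}$. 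Once those boundary contributions are handled correctly the telescoping is immediate, and multiplicativity then gives $\sum_{d|n}R(n,d) = \prod_{p^k\|n}\frac{1}{p^k} = \frac{1}{n}$.
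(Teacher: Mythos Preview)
Your proposal is correct and follows essentially the same route as the paper: reduce by multiplicativity to $n=p^k$, split into the three regimes $j=0$, $1\le j\le k-1$, $j=k$ (with exactly the edge-case observations you flag), and sum. The only minor point is that the final summation is not really a telescope but a short geometric-series computation: with $c_0=\tfrac{p}{p+1}$, $c_j=\tfrac{p-1}{p+1}p^{-j}$ for $1\le j\le k-1$, and $c_k=\tfrac{p}{p+1}p^{-k}$, one sums the geometric middle block and checks $\sum_j c_j=1$; this is precisely what the paper does.
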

\begin{proof}
$R(n,d)$ is multiplicative and so it remains to evaluate it at a prime power $n=p^l$ where $d=p^i, i=0,...,l.$ It is sufficient to do this computation for $i=0, 1 \leq i \leq l-1,$ and $i=l.$  For $i=0,$ the sum has only one term
 $$R(p^l,1)=\frac{1}{p^l(1+\frac{1}{p})}.$$ For  $1\leq i \leq l-1,$ 
$$R(p^l,p^i)=\frac{1}{p^{l+i}} \left(\frac{1-\frac{1}{p}}{1+\frac{1}{p}}\right) .$$

For $i=l,$  
$$R(p^l,p^l)=\frac{1}{p^{2l}}\left(1-\frac{1}{p(1+\frac{1}{p})}\right)=\frac{1}{p^{2l}(1+\frac{1}{p})}.$$

Now $$\sum_{i=1}^{l-1} \frac{1}{p^i} = \frac{1-\frac{1}{p^l}}{1-\frac{1}{p}} -1 =  \frac{\frac{1}{p}-\frac{1}{p^l}}{1-\frac{1}{p}}.$$ 

So \begin{eqnarray*}\sum_{i=0}^{l} R(p^l,p^i) &=& \left[\frac{1}{p^l(1+\frac{1}{p})} \right]+ \left[\frac{1}{p^l}\left(\frac{1-\frac{1}{p}}{1+\frac{1}{p}}\right) \frac{\frac{1}{p}-\frac{1}{p^l}}{1-\frac{1}{p}}\right] + \left[\frac{1}{p^{2l}(1+\frac{1}{p})}\right] \\
&=& \frac{1}{p^{2l}(1+\frac{1}{p})}\left[p^l+p^{l-1}-1 +1\right] \\
 &=&\frac{p^l+p^{l-1}}{p^{2l}(1+\frac{1}{p})}=\frac{1}{p^l}. \end{eqnarray*}

\end{proof}

Using the above lemma with \eqref{eq:wwi6} we are left with \begin{equation}\label{eq:ffr}\frac{1}{ n } \int_{-\infty}^{\infty}  \int_{-\infty}^{\infty} H_{n,1}(x, y)dxdy+O((nX)^{-M}).\end{equation} But remember $H_{n,1}(x,y)$ is defined in terms of the test functions $V_1,V_2 \in C_0^{\infty}(\R^{+}),$ so in the main term integration is limited to $$\int_{0}^{\infty}  \int_{0}^{\infty} H_{n,1}(x, y)dxdy.$$

%The same computation can be done for $d=-d'$ as is done in getting \eqref{eq:ffr}(case of $d=d'$).
%The difference is that the size of the character group is $\phi(\frac{nf}{s})$ where $d=s\sqrt{D}$ by Lemma \ref{Dcard} and gives a term analogous to \eqref{eq:qber} \begin{equation}\label{eq:qber2}\frac{1}{-\sqrt{D}Ds^2  } \sum_{a|s\sqrt{D}} \frac{\mu(a)}{\phi(\frac{na}{ s})}
%\prod_{\substack{ p |\frac{na}{ s\sqrt{D}}}} (\frac{1-\frac{1}{p}}{1+\frac{1}{p}})\int_{-\infty}^{\infty}  \int_{-\infty}^{\infty} H_n(x, y)dxdy.\end{equation}

% to get $$\frac{1}{ D^{3/2}n } \int_{-\infty}^{\infty}  \int_{-\infty}^{\infty} H_n(x, y)dxdy.$$

\subsubsection{Analysis of $T^{R}(n,d)$}
Here recall the remainder term of \eqref{eq:wwi4} is for a fixed $\psi \not\equiv 1 (\frac{nw}{d})_{\Z}, \psi \cdot \overline{\psi'} \equiv 1 \mod (\frac{nw}{d}),$

\begin{equation}\label{eq:www1} \frac{1}{X} \sum_{\substack{w|d, w \in \N}} \mu(w) 
 \prod_{p|\frac{nw}{d}}  \frac{1}{\phi(\frac{nw}{d})_{\Z}}  \sum_{l=0}^\infty  \sum_{\substack{c \in \ri \\ (c,\frac{nw}{d})_{\Z}=1}}    \psi(cs \overline{(cs)'}) F_{n,1}(\frac{ dp^l c}{\sqrt{X}}). \end{equation}

Break the $c$-sum into arithmetic progressions modulo $\frac{nw}{d}$ to get the inner sum equaling

\begin{equation*}\label{eq:www2} \frac{1}{X} \sum_{\substack{w|d, w \in \N}} \mu(w) 
 \prod_{p|\frac{nw}{d}}  \frac{1}{\phi(\frac{nw}{d})_{\Z}}  \sum_{l=0}^\infty  \sum_{q \mod (\frac{nw}{d})} \psi(qs \overline{(qs)'})\sum_{\substack{c \in \ri \\ c\equiv q(\frac{nw}{d})}}     F_{n,1}(\frac{ dp^l c}{\sqrt{X}}). \end{equation*}

We now apply Poisson summation and change of variables to get \begin{multline*} \sum_{k \in \ri}     F_{n,1}(\frac{ dp^l (q+\frac{nw}{d}k)}{\sqrt{X}})=\\ \frac{X}{ (nw)^2p^{2l}} \sum_{m_1 \in \ri} e(\T(\frac{qm_1}{\frac{nw}{d}}))  \int_{-\infty}^{\infty}  F_{n,1}(t_1+\sqrt{D}t_2)e(-\T(\frac{\sqrt{X}m_1(t_1+\sqrt{D}t_2)}{p^lnw}))dt_1dt_2.\end{multline*} By the exact same integration by parts analysis we did for $T^{M}(n,d)$ we get for the integral associated to $m_1 \neq 0$ the bound  $O((m_1X)^{-M})$ for $M>0.$ Therefore, only the $m_1=0$ is non-negligible. However, for $m_1=0$ the $q$-sum from \eqref{eq:www1} is now a complete multiplicative character sum for a nontrivial character and is equal to zero. Summing over all of the nontrivial Dirichlet characters $\psi \mod (\frac{nw}{d})$ then is $O(X^{-M}), M>0.$ Likewise, by an analogous argument to the trivial character, we can get the estimate $O((nX)^{-M}), M>0.$

\end{proof}

\section{From exponential sums over a quadratic field to Kloosterman sums}\label{sec:zag}

In order to classify base change, one needs a bridge between a trace formula over the quadratic field $\mathbf{K}$ to a trace formula over $\Q.$ This formula was discovered in (\cite{Z}, Proposition 24). 

We need a few definitions first. Let $D=D_1D_2,$  then define $\psi(D_2)=(\frac{D_1}{D_2})\sqrt{D_2},$ where $(\frac{a}{b})$ is the quadratic character modulo $b.$ Next define
$$H_{b}(n,m)=\sum_{\substack{D=D_1D_2\\D_2|n \\ (b,D_2)=1}} \frac{\psi(D_2)}{D_2} H^{D_1}_{bD_1}(\frac{n}{D_2},m),$$ where $$H^{D_1}_c(n,m)=\frac{1}{c}(\frac{c}{D_2})S_{D_1}(n\overline{D_2},m,c),$$ and $$S_{D_1}(n\overline{D_2},m,c)=\sum_{d(c)^{*}} (\frac{d}{D_1})e(\frac{n\overline{D_2 d}+md}{c}).$$

\begin{prop}\label{zagier}Let $D$ be prime, then writing $n=Da,  a\in \N$ 
$$\sum_{\substack{r\in O_{\mathbf{K}}/(\frac{n}{\delta})\\rr'\equiv1(n)}} e(\frac{ r l+ r' l'}{n})=a\sum_{\substack{r\in \N \\ r |l \\ r|a}}  H_{a/r}(-\frac{ll'}{r^2},-1). $$
\end{prop}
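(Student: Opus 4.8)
\textbf{Proof proposal for Proposition \ref{zagier}.}

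The plan is to evaluate the exponential sum
$$
S(l):=\sum_{\substack{r\in O_{\mathbf{K}}/(n/\delta)\\ rr'\equiv 1\,(n)}} e\!\left(\frac{rl+r'l'}{n}\right)
$$
by first reducing to the case $l=1$ and then applying Zagier's formula from \cite{Z}. The first step is to handle the dependence on $l$. Writing $rl+r'l' = \mathbf{T}(rl)$, I would decompose the sum over $r$ according to $g=(r,l)$ as an element/ideal gcd (using class number one). Since $rr'\equiv 1\,(n)$ forces $r$ to be a unit mod $n$, and since $(l,D)=1$ by hypothesis, the common divisor $g$ is a rational integer dividing both $l$ and $a$ (here $n=Da$); this is exactly the index of summation $r\in\mathbb N$, $r\mid l$, $r\mid a$ that appears on the right-hand side. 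Pulling out $g=r$, one rescales the modulus $n\mapsto n/r$ and the congruence $rr'\equiv 1$ transforms appropriately, leaving an inner sum of the same shape but with $l$ replaced by $1$ and $n$ replaced by $Da/r$. This is the step I expect to require the most care: tracking precisely how the condition $rr'\equiv 1\,(n)$ behaves under scaling of the modulus, and verifying that the representatives $O_{\mathbf{K}}/(n/\delta)$ decompose cleanly, so that no extra multiplicities appear beyond the factor $a\sqrt{D}$ (which itself comes from the volume/index of $O_{\mathbf{K}}/(n/\delta)$ together with the number of $r$ with $rr'\equiv 1$ in each class).

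The second step is the case $l=1$, i.e. proving
$$
\sum_{\substack{r\in O_{\mathbf{K}}/(n/\delta)\\ rr'\equiv 1\,(n)}} e\!\left(\frac{r+r'}{n}\right)= (\text{norm factor})\cdot H_{1}(-1,-1)
$$
for $n=Da$ with $D$ prime. Here I would invoke Zagier's identity directly: as recalled in the excerpt, for $D\mid n$ one has
$$
\sum_{\substack{r\in O_{\mathbf{K}}/(n/\delta)\\ rr'\equiv 1\,(n)}} e\!\left(\frac{r+r'}{n}\right)=\frac{1}{\sqrt D}\,S_D(1,1,n),
$$
and the more refined twisted version in \cite{Z}, organized by factorizations $D=D_1D_2$ with $D_2\mid n$, yields exactly the combination $H_{a/r}(-\tfrac{ll'}{r^2},-1)$ after unwinding the definitions of $H_b$, $H^{D_1}_c$, and the twisted Kloosterman sums $S_{D_1}$. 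Since $D$ is prime there are only two factorizations ($D_2=1$ and $D_2=D$), so $H_b(-1,-1)$ is a sum of two explicit twisted Kloosterman-sum terms; matching signs (the $-1$ arguments versus $+1$ in the simple form) comes from replacing $r$ by $-r$ or using $e(-x)=\overline{e(x)}$ together with the reality of the sum.

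The final step is bookkeeping: assemble the per-$r$ contributions over $r\mid l$, $r\mid a$, confirm the overall constant is $a\sqrt D$ (the cardinality of a set of representatives of $O_{\mathbf{K}}/(n/\delta)$ relevant to the count, divided by the $1/\sqrt D$ appearing in Zagier's formula), and check the coprimality conditions $(b,D_2)=1$ built into $H_b$ are automatically satisfied because $b=a/r$ and $(a,D)\in\{1,D\}$ with the $D\mid n$ already extracted. The main obstacle, as noted, is the $l$-reduction in step one — specifically making the gcd decomposition of the twisted sum rigorous over $O_{\mathbf{K}}$ and confirming it produces precisely the divisor sum $\sum_{r\mid l,\ r\mid a}$ with weight $r$ and no spurious terms; everything after that is a direct citation of \cite{Z} plus unwinding notation.
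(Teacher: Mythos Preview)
The paper does not prove this proposition. Section~\ref{sec:zag} opens with the sentence ``This formula was discovered in \cite{Z},'' gives the definitions of $H_b$, $H^{D_1}_c$, and $S_{D_1}$, and then simply states Proposition~\ref{zagier} and Corollary~\ref{zagcor} with no argument. The proposition is quoted as Zagier's result, and the reader is sent to \cite{Z} for the proof.

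Your proposal therefore attempts more than the paper does, but the sketch has a structural problem. In step two you plan to ``invoke Zagier's identity directly'' for the case $l=1$, citing the refined $H_b$ version from \cite{Z}. But that refined version \emph{is} Proposition~\ref{zagier}: Zagier's formula is stated and proved in \cite{Z} for general $l$, and the $H_b(n,m)$ notation is his. So the scheme ``reduce to $l=1$, then cite Zagier'' is circular---the thing you want to cite is the theorem you are proving. Moreover, your step one is doubtful on its own terms: the summation variable $r$ on the left is a unit modulo $n$, so $(r,n)=1$ automatically, and the divisor sum $\sum_{r\mid l,\,r\mid a}$ with the weight $r$ on the right does not arise from a gcd decomposition of the exponential sum over $O_{\mathbf K}$; it comes out of Zagier's computation of Fourier coefficients of the modular forms he attaches to the real quadratic field. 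If you genuinely want to supply a proof rather than a citation, you need to reproduce that modular-forms argument from \cite{Z}, not bootstrap from the special case.
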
 

\begin{cor}\label{zagcor}
If $(l,D)=1$ then Proposition \ref{zagier} implies $$ 
 \sum_{\substack{r\in O_{\mathbf{K}}/(\frac{Da}{\delta})\\rr'\equiv1(Da)}} e(\frac{ r l+ r' l'}{Da})= \sum_{\substack{r\in \N \\ r^2 |\N(l) \\ r|a}}  r S_{D}(\frac{ll'}{r^2},1,\frac{Da}{r}).$$ 
\end{cor}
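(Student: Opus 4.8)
The plan is to deduce Corollary \ref{zagcor} from Proposition \ref{zagier} by specializing the function $H_{a/r}(-\frac{ll'}{r^2},-1)$ in the case where $D$ is prime and $(l,D)=1$. Since $D$ is prime, the only factorizations $D=D_1D_2$ are $D_1=D, D_2=1$ and $D_1=1, D_2=D$; I would evaluate the sum $H_b(n,m)$ over these two cases. For $D_2=1$ the factor $\psi(D_2)/D_2 = 1$ and $H^{D}_{b}(n,m) = \frac{1}{b}S_D(n,m,b)$ is the character-twisted Kloosterman sum to modulus $b$. For $D_2=D$ the term only contributes when $D\mid n$; here $n=-\frac{ll'}{r^2}$ and since $(l,D)=1$ we have $D\nmid n$, so this term vanishes. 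Hence only the $D_2=1$ term survives.

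With only the $D_2=1$ term remaining, I would substitute $b = a/r$, $n=-\frac{ll'}{r^2}$, $m=-1$, giving $H_{a/r}(-\frac{ll'}{r^2},-1) = \frac{r}{a}S_D(-\frac{ll'}{r^2},-1,\frac{a}{r})$. Then Proposition \ref{zagier} yields
\[
\sum_{\substack{r\in O_{\mathbf{K}}/(\frac{n}{\delta})\\ rr'\equiv1(n)}} e\!\left(\frac{rl+r'l'}{n}\right) = a\sqrt{D}\sum_{\substack{r\in\N\\ r\mid l\\ r\mid a}} \frac{r}{a}\, S_D\!\left(-\frac{ll'}{r^2},-1,\frac{a}{r}\right) = \sqrt{D}\sum_{\substack{r\in\N\\ r\mid l\\ r\mid a}} r\, S_D\!\left(-\frac{ll'}{r^2},-1,\frac{a}{r}\right).
\]
The remaining discrepancy with the claimed formula is: the factor $\sqrt{D}$ versus $\frac{1}{\sqrt{D}}$; the modulus $\frac{a}{r}$ versus $\frac{Da}{r}$; and the signs $-\frac{ll'}{r^2},-1$ versus $\frac{ll'}{r^2},1$. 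I would reconcile these using elementary identities for the twisted Kloosterman sum $S_D(u,v,c)=\sum_{d(c)^*}\chi_D(d)e(\frac{u\bar d + vd}{c})$: first, $S_D(-u,-v,c)=\overline{S_D(u,v,c)}$ and in fact $S_D(-u,-v,c)=\chi_D(-1)S_D(u,v,c)=S_D(u,v,c)$ since $D$ prime with $D\equiv 1\pmod 4$ gives $\chi_D(-1)=1$; this handles the signs. Second, a twisting/lifting identity relating $S_D$ to modulus $c$ and to modulus $Dc$ — writing out $S_D(\frac{ll'}{r^2},1,\frac{Da}{r})$ and using the Chinese Remainder decomposition of the modulus $\frac{Da}{r} = D\cdot\frac{a}{r}$ together with a Gauss sum evaluation at the prime $D$ (which produces the factor $\sqrt{D}$, using $\chi_D$ being the quadratic character mod $D$) — converts $r S_D(-\frac{ll'}{r^2},-1,\frac{a}{r})$ into $\frac{r}{D}S_D(\frac{ll'}{r^2},1,\frac{Da}{r})$, turning the prefactor $\sqrt{D}$ into $\frac{1}{\sqrt{D}}$.

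The main obstacle I anticipate is bookkeeping the precise normalizations: making sure the definition of $S_{D_1}(n\overline{D_2},m,c)$ from Proposition \ref{zagier} (which has a character $(\frac{d}{D_1})$ and an inverse twist $n\overline{D_2}$) reduces cleanly to the symmetric $S_D(\cdot,\cdot,\cdot)$ appearing in the corollary when $D_2=1$, and correctly tracking the Gauss-sum factor $\psi(D_2)=(\frac{D_1}{D_2})\sqrt{D_2}$ and the Legendre-symbol prefactor $(\frac{c}{D_2})$ in $H^{D_1}_c$. In particular I must verify the sign/quadratic-reciprocity factors collapse to $+1$ given $D$ prime (and the implicit assumption, needed for $\K$ to have an everywhere-unramified-away-from-$D$ setup with real embeddings, that $D\equiv 1\pmod 4$). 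Once the $D_2=D$ term is killed by $(l,D)=1$ and the Gauss sum at $D$ is evaluated, the identity is a short computation; the risk is purely in the constants and the direction of the modulus-lifting identity for $S_D$.
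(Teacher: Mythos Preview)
Your overall strategy is exactly right and matches what the paper intends (the paper gives no separate proof, treating the corollary as an immediate specialization of Proposition~\ref{zagier}): for $D$ prime the only factorizations are $(D_1,D_2)=(D,1)$ and $(1,D)$, and the latter dies because $D_2=D\nmid -\tfrac{ll'}{r^2}$ when $(l,D)=1$. The sign disappears via $\chi_D(-1)=1$, as you say.

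The one genuine slip is in the evaluation of the surviving $(D_1,D_2)=(D,1)$ term. The definition of $H_b(n,m)$ involves $H^{D_1}_{\,bD_1}$, not $H^{D_1}_{\,b}$; so with $D_1=D$ and $b=a/r$ the inner modulus is $c=bD_1=\tfrac{Da}{r}$, and the prefactor is $\tfrac{1}{c}=\tfrac{r}{aD}$. Hence
\[
H_{a/r}\!\left(-\tfrac{ll'}{r^2},-1\right)=\frac{r}{aD}\,S_D\!\left(-\tfrac{ll'}{r^2},-1,\tfrac{Da}{r}\right),
\]
and Proposition~\ref{zagier} gives
\[
a\sqrt{D}\sum_{r}\frac{r}{aD}\,S_D\!\left(-\tfrac{ll'}{r^2},-1,\tfrac{Da}{r}\right)
=\frac{1}{\sqrt{D}}\sum_{r} r\,S_D\!\left(\tfrac{ll'}{r^2},1,\tfrac{Da}{r}\right),
\]
which is exactly the corollary. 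Your entire ``reconciliation'' paragraph (the CRT/Gauss-sum lifting from modulus $\tfrac{a}{r}$ to $\tfrac{Da}{r}$) is an artifact of dropping that $D_1$ in the subscript; no such identity is needed, and in fact the relation $D\cdot S_D(u,v,c)=S_D(u,v,Dc)$ you would be forced to invoke is not generally valid. Fix the subscript and the proof is the two lines above plus your sign observation.
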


\section{The continuous spectrum}\label{sec:cssc}
\label{sec:cont}
We now need to compute the continuous spectrum contribution on the spectral side of the trace formula over the quadratic field $\K.$ Unfortunately, the normalization for the Fourier coefficients for the Eisenstein series is formidable.

Let $\mu_k=\frac{\pi k }{2 \log {\epsilon_0}}, k \in \Z.$ Define $$\omega_{\mu_k}(x):=|\frac{x}{x'}|^{i\mu_k},k \in \Z,$$ where  $\epsilon_0$ is the fundamental unit of our field $\K.$ From \cite{BMP2},
\begin{equation}\label{eq:contfourier}D_r(it,i\mu)= 2\pi \sqrt{\mathbb{N}(r)}\Gamma(1/2+i(t+\mu)) \Gamma(1/2+i(t+\mu'))  \times  \end{equation} $$\left[\frac{ \pi^{i(t+\mu)}}{\N(r)^{1/2+it}\omega_{\mu_k}(r)\Gamma(1/2+i(t+\mu)) \Gamma(1/2+i(t+\mu'))}\sum_{(c)} \frac{\omega_{\mu_k}^2(c) S(r,0,c)}{\mathbb{N}(c)^{1+2it}}\right],$$ where $S(r,0,c)=\sum_{x(c)^{*}}e(xr/c).$  Here also $\mu$ is in the lattice $Tr_{\mathbf{K}/\Q}(x)=0,$ such that $|\epsilon|^{i\mu}=1,$ for all $\epsilon \in \ri^{*}.$ Clearly $\mu_k, k \in \Z$ is in this lattice.

%Using the functional equation of the gamma function \begin{equation}\label{eq:gamm}\Gamma(1/2+it)\Gamma(1/2-it)=\frac{\pi}{\cosh{\pi t}^{-1}\end{equation}, 

Simplifying we have $$D_r(it,i\mu_k)= \frac{2\pi \pi^{i(t+\mu_k)}}{ N(r)^{it}\omega_{\mu_k}^2(r)}\sum_{(c)} \frac{\omega_{\mu_k}^2(c) S(r,0,c)}{\mathbb{N}(c)^{1+2it}}.$$
%Here also $\mu$ is in the lattice $Tr_{\mathbf{K}/\Q}(x)=0,$ such that $|\epsilon|^{i\mu}=1,$ for all $\epsilon \in \ri^{*}.$ In our case, $\mu_k=\frac{\pi k }{2 \log {\epsilon_0}}, k \in \Z.$ 

The continuous spectrum contribution is \begin{equation}\label{eq:cco}  CSC^{\K}_{\rho.\xi}:=\frac{1}{ L(1,\chi_D)} \sum_{k \in \Z} \int_{-\infty}^{\infty}h(V_1,t+\mu_k)h(V_2,t+\mu_k')D_{\rho}(it,i\mu_k)\overline{D_{\xi}(it,i\mu_k)}dt.\end{equation} We note the term $ \frac{1}{L(1,\chi_D)},$ is mentioned in \cite{J}, but buried as a constant in \cite{BMP1}. 

We prove in this section

%\begin{prop}\label{propcsc}
%\begin{multline*}\label{eq:prpp} \frac{1}{X} \sum_{\mm} \sum_{n} g(\mm^2 n/X) CSC_{n,l}=\\ 2\pi  \bigg(\sum_{\substack{r\in \N \\  r^2 | \N(l)}}   \sum_{\pi_{D,\mu_k}} h(V_1*V_2,\nu_{\pi_{D,\mu}}){c_{\frac{ll'}{r^2}}(\pi_{D,\mu})}{\overline{c_1(\pi_{D,\mu})}}+\\  \sum_{\substack{r\in \N \\  r^2 | \N(l)}}\frac{1}{4\pi} \int_{-\infty}^{\infty} h(V_1*V_2,t)D_{\frac{ll'}{r^2}} (it,0) \overline{D_{1} (it,0)}dt\bigg)\bigg(1+o(1)\bigg)+ O(X^{-M}). \end{multline*} for any integer $M \geq 0.$
%\end{prop}

\begin{prop}\label{propcsc}
If $$\sigma_{l,\omega_{\mu_k}}(n):=\sum_{a|n} \omega_{\mu_k}(a)\mathbb{N}(a)^l,$$ then we have \begin{align}\label{eq:prpp}  \sum_{\mm,n \in \N}  g(\mm^2 n/X) CSC^{\K}_{n,\xi}= \frac{1}{ L(1,\chi_D)} \sum_{k \in \Z}\frac{\pi^3 h(V_1,\mu_k)h(V_2,\mu_k) \sigma_{0,\omega_{\mu_k}^2}(\xi)}{ \omega_{\mu_k}(\xi) L(1,\chi_D)  \overline{L(1,\omega_{\mu_k}^2)}} +\\ 4\pi^2 \int_{-\infty}^{\infty} h(V_1,t)h(V_2,t)\frac{\sigma_{-2it,0}(\xi)\mathbb{N}(\xi)^{it}  }{   |L(1+2it,\chi_{D})|^2}dt +O(X^{-M}),\end{align} for any integer $M \geq 0.$
\end{prop}

 We aim first to simplify our normalized coefficients. Fix a character $\omega_{\mu_k}.$  Using Mobius inversion $$ \sum_{(c)} \frac{\omega_{\mu_k}^2(c)S(r,0,c)}{\mathbb{N}(c)^{1+2it}}= \frac{ \sigma_{2it,\omega_{\mu_k}^2}(r)}{ L(1+2it,\omega_{\mu_k}^2)}.$$ Therefore, \begin{equation}\label{eq:contmob}D_r(it,i\mu_k)= \frac{ 2\pi \pi^{i(t+\mu)}}{ N(r)^{it}\omega_{\mu_k}(r)}\frac{ \sigma_{2it,\omega_{\mu_k}^2}(r)}{ L(1+2it,\omega_{\mu_k}^2)}.\end{equation}
 
Recall $G(s)=\int_0^\infty g(x) x^{s-1}dx, G(1)=1.$ We collect the $n$ and $\mm$ terms together from \eqref{eq:cco} and use Mellin Inversion  to get 
\begin{equation}\label{eq:div}
\sum_{\mm} \sum_{n} g(\mm^2 n/X)\frac{\sigma_{2it,\omega_{\mu_k}^2}(n)}{\mathbb{N}(n)^{it}}=\frac{1}{2\pi i}\int_{(\sigma)}G(s)X^{s} \zeta(2s)L_{\omega_{\mu_k},t}(s)ds,  
\end{equation} for $\sigma>0$ sufficiently large. Here $$L_{\omega,t}(s)=\sum_{n=1}^\infty \frac{\sigma_{2it,\omega^2}(n)}{n^{s+2it}},$$  is the Asai $L$-function associated to the Eisenstein series over the quadratic field $\K.$  Using the multiplicativity of the divisor function $L_{\omega,t}(s)$ can be written as an Euler product. Specifically, the Dirichlet series equals for $p$ split, $p=\pa \pb,$ $$\frac{(1-\frac{1}{p^{2s}})}{(1-\frac{\omega^2(\pa)}{p^{s}})(1-\frac{\omega^2(\pb)}{p^{s}})(1-\frac{1}{p^{s+2it}})(1-\frac{1}{p^{s-2it}})}.$$ 
 
 For $p$ inert, $\chi_D(p)=-1$ and $\omega^2(p)=1,$ so we get $$\frac{1}{(1-\frac{1}{p^{s+2it}})(1-\frac{1}{p^{s-2it}})}=\frac{(1-\frac{1}{p^{2s}})}{(1-\frac{1}{p^{s}})(1-\frac{\chi_{D}(p) }{p^{s}})(1-\frac{1}{p^{s+2it}})(1-\frac{1}{p^{s-2it}})}.$$ 
 
 For $p=\pa^2$ ramified, $\chi_D(\pa)=0$ and $\omega^2(\pa)=1$ and so the corresponding piece of the Euler product is   $$\frac{(1+\frac{1}{p^{s}})}{(1-\frac{1}{p^{s+2it}})(1-\frac{1}{p^{s-2it}})}=\frac{(1-\frac{1}{p^{2s}})}{(1-\frac{1}{p^{s}})(1-\frac{\chi_{D}(p) }{p^{s}})(1-\frac{1}{p^{s+2it}})(1-\frac{1}{p^{s-2it}})}.$$
 
 So $$L_{\omega_k,t}(s)=\frac{L(s,\omega_{\mu_k}^2)\zeta(s+2it)\zeta(s-2it)}{\zeta(2s)}.$$
 %So \begin{equation}\label{eq:elc} L(s)=\frac{(1-\frac{1}{p^{2s}})}{(1-\frac{\omega(p)}{p^{s}})(1-\frac{\chi_{D}(p) }{p^{s}})(1-\frac{1}{p^{s+2it}})(1-\frac{1}{p^{s-2it}})}.\end{equation}

There are now 2 cases of analysis for this L-function $L_{\omega_{\mu_k},t}(s):$  $\omega=\omega_{\mu_k}$ having $k\neq 0$ or $k=0.$ 

\subsection{$\bf{k \neq 0}$}

Fix a $k \neq 0,$ then $ L_{\omega_{\mu_k},t}(s)$ only has poles at $s=1\pm 2it.$ We also have a $\zeta(2s)$ coming from the Euler product above which will cancel with the $\zeta(2s)$ in \eqref{eq:div}.   
 
 Thus, shifting the contour $\sigma$ of the integral to $-M$ for $M\geq 2$ will pick up a simple pole at $s=1+2it$ (as well as a pole at $1-2it$) with residue $$  X^{1+2it} \frac{\zeta(1+4it)L(1+2it,\omega_{\mu_k}^2)}{\zeta(2(1+2it))}$$  (an analogous term exists for the pole at $s=1-2it$).

 Shifting the contour we get \begin{multline}\label{eq:dcomp} \frac{1}{L(1,\chi_D)X}\sum_{n,\mm} g(\mm^2 n/X) \int_{-\infty}^{\infty}h(V_1,t+\mu_k)h(V_2,t+\mu_k') D_n(it,i\mu_k)\overline{D_{\xi}(it,i\mu_k)}dt =\\  \frac{4 \pi^2}{L(1,\chi_D)X} \int_{-\infty}^{\infty}h(V_1,t+\mu_k)h(V_2,t+\mu_k') \times \\ \left(\frac{\sigma_{-2it,\omega_{\mu_k}^2}(\xi)\mathbb{N}(\xi)^{it}}{ \omega_{\mu_k}(\xi) |L(1+2it,\omega_{\mu_k}^2)|^2} \right) \times \\
   X^{2it} \zeta(1+4it)L(1+2it,\omega_{\mu_k}^2)dt + O(X^{-M}). \end{multline} 
 
 %We note the term $ \frac{1}{L(1,\theta_D)},$ is mentioned in \cite{J}, but buried as a constant in \cite{BMP1}. 
   
Here $\zeta(1+4it)$ has a pole at $t=0$ and to understand this we use the following lemma. 

\begin{lemma}\label{ve}
Let $H$ be a differentiable function in $L^{1}(\mathbb{R})$ with $b>0$ then
$$PV \int_{-\infty}^{\infty} H(x)e^{2\pi ibx}\frac{dx}{x}=\pi i H(0).$$  
\end{lemma}

\begin{proof}
This is just a statement of the Cauchy principal value theory for a Fourier integral  \cite{BTVV}. 
\end{proof}

Let \begin{equation}F(t)=\frac{4 \pi^2}{L(1,\chi_D)} h(V_1,t+\mu_k)h(V_2,t+\mu_k')\left(\frac{\sigma_{-2it,\omega_{\mu_k}^2}(\xi)\mathbb{N}(\xi)^{it}}{  \omega_{\mu_k}(\xi) |L(1+2it,\omega_{\mu_k}^2)|^2} \right) L(1+2it,\omega_{\mu_k}^2).\end{equation} We can ensure we can use Lemma \ref{ve} on $F(t)$ by using specific decaying properties of $h(V_i,t), i=1.2$ from Proposition \ref{venkm2}. This is analogous to the decay properties needed to prove Lemma 10 of \cite{V}.

%the integral of \eqref{eq:dcomp} can be reduced to $$\int_{-\infty}^{\infty} F(t)e^{it\log X}\frac{dt}{t}.$$

Using $\mu_k'=-\mu_k,$ the residue of $\zeta(1+4it)$ is $\frac{-i}{4}.$ As  $h(V_j,t)$ is a real valued function, we apply the lemma for $b=\log X>0$ to \eqref{eq:dcomp} to obtain

\begin{equation}\label{eq:kcomp}  \frac{ \pi^3}{L(1,\chi_D)} h(V_1,\mu)h(V_2,\mu)\left(\frac{\sigma_{0,\omega_{\mu_k}^2}(\xi)}{ \omega_{\mu_k}(\xi) \overline{ L(1,\omega_{\mu_k}^2)}} \right).
\end{equation}

%\begin{remark}
%We point out in this case we do take the limit as $X \to \infty$ as writing the integral $\int_{-\log X}^{\log X}...$ everywhere would be tedious. The reader should take away from this section that there is a main term (depending on X) and an error term of size $O(X^{-M}),$ which as $X \to \infty$ leaves the term \eqref{eq:kcomp}. 
%\end{remark}

 Therefore, for $k\neq0$  \begin{equation}\label{eq:knzz} \frac{ \pi^3 h(V_1,\mu_k)h(V_2,\mu_k) \sigma_{0,\omega_{\mu_k}^2}(\xi)}{ \omega_{\mu_k}(\xi) L(1,\chi_D)  \overline{L(1,\omega_{\mu_k}^2)}} .\end{equation}
 
Using properties of $\omega_{\mu_k}, k \neq 0$  $$\frac{\sigma_{0,\omega_{\mu_k}^2}(\xi)}{ \omega_{\mu_k}(\xi) \overline{L(1,\omega_{\mu_k}^2)}}=\frac{\sigma_{0,\omega_{\mu_k}^2}(\xi)}{ \omega_{\mu_k}(\xi) L(1,\omega_{\mu_k}^2)},$$ and we have the first part of Proposition \ref{propcsc}.

\subsection{$\bf{k=0}$}

The calculations in this case are analogous to the previous section, however here we have $$L(s)=\frac{\zeta(s+2it)\zeta(s-2it)\zeta(s)L(s,\chi_D)}{\zeta(2s)}.$$ Again the $\zeta(2s)$ cancels with the $\mm$-sum, and the poles of $L(s)$ are at $s=1, 1\pm 2it.$ However, the poles at $s=1\pm 2it,$ cancel each other as they residues $\pm 4i,$ respectively.  
 
 We note $$\frac{1}{|L(1+2it,\omega^2)|^2}=\frac{1}{|\zeta(1+2it)L(1+2it,\chi_D)|^2}$$ found  in \eqref{eq:contmob} after expanding $D_n(it,0)\overline{D_{\xi}(it,0)}.$ So for $k=0,$ the left hand side of Proposition \ref{propcsc}  after a contour shift  to $-M, M\geq 0,$ similar to \eqref{eq:div}, equals

\begin{multline}\label{eq:3res} \frac{4\pi^2}{L(1,\chi_D) X}  \int_{-\infty}^{\infty} h(V_1,t)h(V_2,t) \left(\frac{\sigma_{-2it,0}(\xi)\mathbb{N}(\xi)^{it}}{ |\zeta(1+2it) L(1+2it,\chi_{D})|^2} \right)\times \\ \bigg[ X|\zeta(1+2it)|^2L(1,\chi_{D})G(1) +X^{1+2it} \zeta(1+4it)\zeta(1+2it)L(1+2it,\chi_D)G(1+2it) \\ + X^{1-2it} \zeta(1-4it)\zeta(1-2it)L(1-2it,\chi_D)G(1-2it)\bigg]+O(X^{-M}).
\end{multline}

We deal with the terms containing $X^{1\pm2it}$ first. Take the term with $X^{1+2it},$ the other term will be analogous. This term is after some simplifications, \begin{equation}\label{eq:2ndres}    \frac{4\pi^2}{L(1,\chi_D) }\int_{-\infty}^{\infty} h(V_1,t)h(V_2,t) \left(\frac{\sigma_{-2it,0}(\xi)\mathbb{N}(\xi)^{it} X^{2it} \zeta(1+4it)}{ \zeta(1-2it) L(1-2it,\chi_{D})} dt\right)
\end{equation} Now $\frac{\zeta(1+4it)}{\zeta(1-2it)L(1-2it,\chi_{D})}$ is analytic for $t \in \R,$ and using Proposition \eqref{venkm2}, we can assure that $$F(t):= h(V_1,t)h(V_2,t) \left(\frac{\sigma_{-2it,0}(\xi)\mathbb{N}(\xi)^{it} \zeta(1+4it)}{ \zeta(1-2it) L(1-2it,\chi_{D})} \right)$$ is analytic in the complex variable $t.$ By an application of the Paley-Wiener theorem,  we know $\hat{F}(k)=\int_{-\infty}^{\infty} F(t)e(kt)dt$ has compact support. So taking $k=\log X$ large enough to be outside the support of $\hat{F},$ we have \eqref{eq:2ndres} is equal to zero.

The term in \eqref{eq:3res} coming from the pole at $s=1$ can easily be simplified to 
\begin{equation}\label{eq:1stres}  4\pi^2 \int_{-\infty}^{\infty} h(V_1,t)h(V_2,t) \left(\frac{\sigma_{-2it,0}(\xi)\mathbb{N}(\xi)^{it}  }{   |L(1+2it,\chi_{D})|^2}dt \right).
\end{equation}

Putting together \eqref{eq:knzz}, \eqref{eq:1stres}, and the associated error $O(X^{-M})$ we have completed Proposition \ref{propcsc}.

\section{Putting it all together} \label{sec:putti}

Incorporating Proposition \ref{m000}, Corollary \ref{cor12}, and Corollary \ref{zagcor}, we have 
\begin{equation}
(L)=\frac{1}{X} \sum_{\mm} \sum_n g(\mm^2n/X)\big(\sum_{\Pi \neq \mathbf{1}}
h(V,\nu_{\Pi})c_{n}(\Pi)\overline{c_{l}(\Pi)}+
CSC^{\K}_{n,l}\big)=
\end{equation}
 
$$  \delta(l,l')\int_0^\infty \int_0^{\infty} V_1(x)V_2(x)\frac{dx}{x} +$$

$$ +  2\sum_{a=1}^\infty \frac{1}{Da}
\sum_{\substack{r\in \N \\ r^2 |\N(l) \\ r|a}} r S_{D}(\frac{l l'}{r^2},1,\frac{Da}{r})\int_{0}^{\infty}
\int_{0}^{\infty} H_{Da,1}(x,y)dxdy+O(X^{-M}).$$ The $2$ in front of the Kloosterman sum is counting $\{Da, -Da\}$ from the Poisson summation. We also mention here that we used Proposition \ref{limitcalc} (writing $n=Da$ by Lemma \ref{nDa}) to get $$\sum_{a=1}^\infty O((Da)^{-M}X^{-M})=O(X^{-M}),$$ for $M$ large. Our aim now is to show $(L)$ is a geometric side of a Kuznetsov trace formiula for representations $\pi_D.$

%We now follow section 9 of \cite{H} closely. 
%Define
%\begin{equation} V_1*V_2(z):=  \int_{-\infty}^{\infty}
%\int_{-\infty}^{\infty}
%  \exp \left(z\frac{i}{2}(
% \frac{x}{y}+\frac{y}{x})\right) \exp \left( (\frac{1}{z})\frac{8\pi^2i}{xy}\right)\times \end{equation} $$V(\frac{4\pi
%}{x})  W(\frac{4\pi}{ y})\frac{dx}{x} \frac{dy}{y}.
%$$
 Recalling Definition \ref{hnn} and the dependence on $l,$ as well as the definition of the convolution operation for the Bessel transform in Section \ref{sec:bccv}, we have the equality 
 \begin{multline}\label{eq:ST}
\int_{0}^{\infty}\int_{0}^{\infty} H_{Da,1}(x,y)dxdy =\\ \int_{-\infty}^{\infty}e(\frac{-1}{Da}(\frac{xl'}{y}+\frac{yl}{x}))e(\frac{Da}{xyD}) V_1(\frac{4\pi\sqrt{l}}{x})  V_2(\frac{4\pi\sqrt{l'}}{y})\frac{dx}{x} \frac{dy}{y}=\\
\int_{-\infty}^{\infty}e \left(-\frac{\sqrt{ll'}}{Da}(\frac{x}{y}+\frac{y}{x})\right) e \left( \frac{Da}{\sqrt{ll'}}\frac{1}{xy}\right)\times V(\frac{4\pi}{x})  W(\frac{4\pi}{ y})\frac{dx}{x} \frac{dy}{y}=V_1*V_2(\frac{4\pi \sqrt{ll'}}{Da}).\end{multline}

%D \left(V_1*V_2(\frac{\sqrt{l l'}}{Da})\right).

%The $D$ factor comes from a change of variables $t \to D t$ in the definition of $I_m(n,x,1)$ inside of Definition \ref{hnn}.

%Now Theorem $3.1$ of \cite{H} states 
%\begin{theorem} \label{main theorem} For all $V, W \in C_0^{\infty}(\R^{+})$ , $h(V*W,t)=C_{t} h(V,t)h(W,t),$ where $C_{t}=2\pi$ for $t$ an even integer, and $C_{t}=\pi$ for $t$ purely imaginary.
%\end{theorem}

 %Using Theorem \ref{main theorem} from Section \ref{sec:bccv} and the Sears-Titchmarsh inversion formula for $V_1*V_2$ \cite{H},
%we have \begin{equation}\label{gg}  V_1*V_2(z) = 4\pi \left(\int_0^{\infty} h(V_1*V_2,t) \text{tanh}(\pi t) B_{2it}(z) tdt +
%\sum_{2k>0,k\in \N} (k-1)J_{k-1}(z)h(V_1*V_2,k)\right),\end{equation}

From \cite{H} we need the following Proposition.
\begin{prop}\label{pr5} \begin{align}
  \int_0^{\infty} V_1(x)V_2(x)
\frac{dx}{x} =  2\left(\int_{-\infty}^\infty
h(V_1*V_2,t)\tanh(\pi t) t dt  +  \sum_{2k>0,k\in \N}
(k-1)h(V_1*V_2,k) \right).
\end{align}

\end{prop}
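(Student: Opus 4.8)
The plan is to obtain the identity by combining the Sears--Titchmarsh / Kontorovich--Lebedev type inversion formula \eqref{gg} with the classical orthogonality of the Bessel kernels $B_{2it}$ and $J_{k-1}$. Concretely, $V_1*V_2(z)$ is a convolution whose spectral expansion \eqref{gg} is already given, so I would regard \eqref{gg} as an expansion of $V_1*V_2$ in the ``Bessel transform'' basis. Applying the inverse Bessel transform (the same transform that sends $V$ to $h(V,t)$, as defined in Section 3) to both sides, the left side produces $h(V_1*V_2,t)$, which by Theorem \ref{main theorem} equals $C_t\, M(t)$, and the right side recovers the spectral measure $M(t)\tanh(\pi t)\,t\,dt$ together with the discrete masses $(k-1)M(k)$. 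This is the standard Plancherel pairing underlying the Kuznetsov formula, so the mechanics are available in \cite{H}.

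The concrete route to the stated formula is to specialize a Parseval-type identity: for two test functions with Bessel transforms $h(V_1,\cdot)$ and $h(V_2,\cdot)$, one has
\begin{equation*}
\int_0^\infty V_1(x)V_2(x)\,\frac{dx}{x}
= \int_{-\infty}^\infty h(V_1,t)h(V_2,t)\tanh(\pi t)\,t\,dt
+ \sum_{\substack{2k>0\\ k\in\N}}(k-1)h(V_1,k)h(V_2,k),
\end{equation*}
up to the normalizing constants built into the definitions in Section 3. First I would write $\int_0^\infty V_1(x)V_2(x)\frac{dx}{x}$ as the value at a suitable point (or a limiting diagonal value) of the kernel expansion \eqref{gg}, using that $B_{2it}$ and $J_{k-1}$ at the relevant argument collapse the convolution back to the pointwise product $V_1 V_2$. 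Then I would invoke the completeness/orthogonality relations for $\{B_{2it}\}_{t\in\R}$ and $\{J_{k-1}\}_{k\ge 2}$ — exactly the relations that make the Sears--Titchmarsh inversion valid — to read off the spectral and discrete contributions with the weights $\tanh(\pi t)\,t$ and $(k-1)$ respectively. Writing $M(t)=h(V_1,t)h(V_2,t)$ throughout and folding the integral over $\R^+$ into one over $\R$ (legitimate since $M$ and $\tanh(\pi t)\,t$ are even) gives the displayed identity.

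The main obstacle is bookkeeping of constants and convergence rather than anything conceptual: one must check that all the archimedean normalization factors from \eqref{eq:dnorm}, from the definition of $h(V_i,\nu_{\Pi_i})$, and from Theorem \ref{main theorem} (the $C_t = 2\pi$ vs.\ $\pi$ dichotomy) combine to produce exactly the factor $2$ in front. One also needs to justify interchanging the $dx$-integral with the spectral integral/sum in \eqref{gg}; this follows from the rapid decay $h(V_i,t)\ll_N (1+|t|)^{-N}$ for $V_i\in C_0^\infty(\R^+)$ (smoothness and compact support away from $0$), together with the polynomial growth of the Bessel kernels, so absolute convergence holds and Fubini applies. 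With these checks in place the proposition follows directly from \cite{H}; in fact I expect the cleanest writeup is simply to cite the relevant inversion and Parseval statements in \cite{H} and specialize, since Proposition \ref{pr5} is precisely the ``diagonal'' case of that Parseval identity.
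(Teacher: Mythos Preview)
Your proposal is correct and matches the paper's treatment: the paper does not give an independent proof of Proposition~\ref{pr5} but simply imports it from \cite{H} (note the phrase ``Further from \cite{H} we need'' immediately preceding the statement). Your sketch of the underlying Parseval identity for the Bessel transform is the right explanation of why the cited result holds, and your closing suggestion---to cite \cite{H} directly---is exactly what the paper does.
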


 $(L)$ can be then rewritten as 
\begin{multline}\label{eq:biggie}
%\begin{split} 
(L)= 2\pi\bigg[\delta(l,l')\left(\int_{-\infty}^\infty
h(V_1*V_2,t)\tanh(\pi t) t dt   +  \sum_{2k>0,k\in \N}(k-1)h(V_1*V_2,k)\right) +\\    \sum_{a=1}^\infty \frac{1}{Da}
\sum_{\substack{r\in \N \\ r^2 | \N(l) \\ r|a}} r S_{D}(\frac{l l'}{r^2},1,\frac{Da}{r})(V_1*V_2)\big(\frac{4\pi \sqrt{l l'}}{Da}\big) \bigg].
%\end{split} 
\end{multline}

First we need to simplify the Kloosterman sums in \eqref{eq:biggie}. We invert the $r$- and $s$- sums giving 
\begin{multline}
\sum_{a=1}^\infty \frac{1}{Da}
\sum_{\substack{r\in \N \\ r^2 | \N(l) \\ r|a}} r S_{D}(\frac{l l'}{r^2},1,\frac{Da}{r}) (V_1*V_2)\big(\frac{4\pi \sqrt{l l'}}{Da}\big)= \\ 
\sum_{\substack{r\in \N \\ r^2 | \N(l)} } \sum_{a=1}^\infty \frac{1}{Da} S_{D}(\frac{l l'}{r^2},1,Da)(V_1*V_2)\big(\frac{4\pi \sqrt{l l'}}{Da}\big)=\\ 
\sum_{\substack{r\in \N \\  r^2 | \N(l)} } \sum_{a=1}^\infty \frac{1}{Da} S_{D}(\frac{l l'}{r^2},1,Da)(V_1*V_2)\big(\frac{4\pi \sqrt{l l'}}{Da}\big).
\end{multline}

%We rewrite the holomorphic side of the trace formula as well in this fashion in \eqref{eq:biggie}. Using the spectral sides of the Kuznetsov and Petersson trace formula we get $(L)$ equals,  

%\begin{multline}\label{eq:cscsq}
%2\pi \sum_{\substack{r\in \N \\  r | l} }  \bigg[  \delta(l,l')\int_{-\infty}^\infty M
%(t)\tanh(\pi t) t dt + \sum_{a=1}^\infty \frac{1}{Da} S_{D}(\frac{l l'}{r^2},1,Da)\times \\  \int_0^{\infty} M(t) \tanh
%(\pi t) B_{2it}(\frac{4\pi\sqrt{\frac{ll'}{r^2}}}{Da}) tdt  + \\   \delta(l,l') \sum_{2k>0,k\in \N}(k-1)M(k) +  \sum_{a=1}^\infty \frac{1}{Da} S_{D}(\frac{l l'}{r^2},1,Da)\sum_{2k>0,k\in \N} (k-1)J_{k-1}(\frac{4\pi\sqrt{\frac{ll'}{r^2}}}{Da})M(k) \bigg] = \\   =2\pi \sum_{\substack{r\in \N \\  r | l} }  \bigg( \sum_{\pi_{D} } M(t_{\pi})
%{c_{\frac{ll'}{r^2}}(\pi_{D})}{\overline{c_1(\pi_{D})}}  +\\ \frac{1}{4
%\pi}\int_{-\infty}^\infty M(t)D_{\frac{ll'}{r^2}}(1/2+it,0)\overline{D_1(it,0)} dt +
%\sum_{\pi_{k,D} }
%M(k_{\pi}){c_{\frac{ll'}{r^2}}(\pi_{k,D})}{\overline{c_1(\pi_{k,D})}}\bigg).
%\end{multline}

Applying the Kuznetsov trace formula over $\Q$ for automorphic representations $\pi_{D},$ we have 
\begin{multline}\label{eq:xzx} 2\pi \delta(l,l')\sum_{\substack{r\in \N \\  r^2 | \N(l)} }\bigg[  \big(\int_{-\infty}^\infty
h(V_1*V_2,t)\tanh(\pi t) t dt  +  \sum_{2k>0,k\in \N}
(k-1)h(V_1*V_2,k)\big)+\\  \sum_{a=1}^\infty \frac{1}{Da} S_{D}(\frac{l l'}
{r^2},1,Da)(V_1*V_2)\big(\frac{4\pi \sqrt{l l'}}{Da}\big)\bigg]=\\ 2\pi \sum_{\substack{r\in \N \\   r^2 | \N(l)} } 
\big( \sum_{\pi_{D}} h(V_1 *V_2,\nu_{\pi_{D}})c_{\frac{ll'}{r^2}}(\pi_{D})
\overline{c_1(\pi_{D})} + CSC^{\Q}_{\frac{ll'}{r^2},1}\big).\end{multline}

We showed in Section \ref{sec:cont}, 

\begin{multline}\label{eq:cscs} \frac{1}{X} \sum_{\mm}\sum_{n} g(\mm^2 n/X) CSC^{\K}_{n,l}=4\pi^3 \sum_{k \in \Z}\frac{ h(V_1,\mu_k)h(V_2,\mu_k) \sigma_{0,\omega_{\mu_k}^2}(\xi)}{ \omega_{\mu_k}(\xi) L(1,\chi_D) L(1,\omega_{\mu_k}^2)} +\\ 4\pi^2 \int_{-\infty}^{\infty} h(V_1,t)h(V_2,t)\frac{\sigma_{-2it,0}(\xi)\mathbb{N}(\xi)^{it}  }{   |L(1+2it,\chi_{D})|^2}dt +O(X^{-M}),\end{multline} for $M >0.$ In the next section we show that the $k$-sum is associated to the automorphic representations $\pi_{D,\mu_k},k \in \Z, k \neq 0$ over $\Q$ while the integral from \eqref{eq:cscs} is equal to the continuous spectral term from \eqref{eq:xzx}. 

\subsection{Comparing Coefficients from $\K$ to $\Q$}

\indent In this section we compare the normalized Fourier coefficients of the continuous spectrum from  \eqref{eq:cscs} to those of analogous coefficients over $\Q.$ Further we show the terms $\sigma_{0,\omega_{\mu_k}^2}(\xi)$ from the $k$-sum are associated to Fourier coefficients of $\pi_{D,\mu_k}.$
%Further we show the $\mu$-sum in \eqref{eq:cscs} is associated with a sub sum ( "Theta functions") of the cuspidal spectrum in \eqref{eq:cscsq}.

Recall $\psi_{\mu_k}(k):=\sum_{\mathbb{N}(q)=k}\omega_{\mu_k}(q)$ from Section \ref{sec:theaa} are associated to representations $\pi_{D,\mu_k}.$

 \begin{lemma}\label{compa}

\hspace{.05cm}

 \begin{enumerate}

 \item{ $\mathbb{N}(l)^{it}\sigma_{-2it,0}(l)= \sum_{\substack{r\in \N \\  r^2 | \N(l)} }\tau_{it}(\frac{ll'}{r^2}),$}
 
\item{ $\omega_{\mu}(l)^{-1}\sigma_{0,\omega^2_{\mu}}(l)=\sum_{\substack{r\in \N \\  r^2 | \N(l)}} \psi_{\mu}(\frac{ll'}{r^2}).$} 
 \end{enumerate}
 \end{lemma}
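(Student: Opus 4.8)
The plan is to verify both identities by reducing them to local (prime-by-prime) statements, exploiting the multiplicativity of all the arithmetic functions involved. Both $\sigma_{l,\omega}(n)=\sum_{a|n}\omega(a)\mathbb{N}(a)^l$ (as a function of the $\mathcal{O}_{\mathbf{K}}$-ideal $l$) and $\psi_\mu(k),\tau_{it}(k)$ (as functions of the rational integer $k$), and the map $l\mapsto ll'$ together with the sum over rational divisors $r\mid l$, are all multiplicative, so it suffices to check each identity when $l=p^j$ is a power of a single rational prime $p$, and then to treat separately the three splitting types of $p$ in $\mathbf{K}$: $p$ split, $p$ inert, $p$ ramified (the last being just $p=D$, which is excluded since $(l,D)=1$, so only split and inert remain).

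For part (1): when $p$ splits, $p=\mathfrak{p}_1\mathfrak{p}_2$ with $\mathbb{N}(\mathfrak{p}_i)=p$ and $\mathfrak{p}_1'=\mathfrak{p}_2$, so an ideal $l$ of norm $p^j$ supported above $p$ is $\mathfrak{p}_1^a\mathfrak{p}_2^b$ with $a+b=j$; then $ll'=\mathfrak{p}_1^{a+b}\mathfrak{p}_2^{a+b}$ has norm $p^{2j}$ and corresponds to the rational integer $p^{2j}$. The left side $\mathbb{N}(l)^{it}\sigma_{-2it,0}(l)$ is $p^{jit}\sum_{a'\le a,\,b'\le b}\mathbb{N}(\mathfrak{p}_1^{a'}\mathfrak{p}_2^{b'})^{-2it}=p^{jit}\bigl(\sum_{a'=0}^a p^{-2a'it}\bigr)\bigl(\sum_{b'=0}^b p^{-2b'it}\bigr)$, which should be matched against $\sum_{r\mid p^j}\tau_{it}(p^{2j}/r^2)=\sum_{i=0}^{j}\tau_{it}(p^{2(j-i)})$; using $\tau_{it}(p^m)=\sum_{u+v=m}\chi_D(p^u)(p)^{(u-v)it}$ and $\chi_D(p)=1$ for split $p$, each term is a geometric progression, and one checks the two sides agree as polynomials in $p^{it}$. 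When $p$ is inert, $p\mathcal{O}_{\mathbf{K}}$ is prime of norm $p^2$, so an ideal of norm $p^j$ above $p$ exists only for $j$ even, say $j=2c$, it equals $(p)^c$, is Galois-stable, and $ll'=(p)^{2c}$ corresponds to $p^{2c}$... wait, here I must be careful about the norm bookkeeping, but in all cases the left side collapses to a single geometric sum in $p^{-2it}$ and the right side, with $\chi_D(p)=-1$, produces the matching alternating/telescoping sum; this is the routine but slightly fiddly check.

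For part (2): the argument is formally identical with $\mathbb{N}(a)^{-2it}$ replaced by $\omega^2_\mu(a)$ and $\chi_D(p)(p)^{(u-v)it}$ replaced by the value of $\omega^2_\mu$ on the relevant prime, using $\psi_\mu(k)=\sum_{\mathbb{N}(q)=k}\omega_\mu(q)$ and the key facts that $\omega_\mu(q)=|q/q'|^{i\mu}$ depends only on the ideal class data, that $\omega_\mu(n)=1$ for rational $n$ (since $n=n'$), and that for a split prime $\omega_\mu^2(\mathfrak{p}_1)\omega_\mu^2(\mathfrak{p}_2)=\omega_\mu^2(p)=1$. Concretely, $\psi_\mu(p^{2m})=\sum_{u+v=2m}\omega_\mu(\mathfrak{p}_1^u\mathfrak{p}_2^v)=\sum_{u+v=2m}\omega_\mu(\mathfrak{p}_1)^{u-v}$, which again is a geometric/telescoping sum, and one matches it against $\omega_\mu(l)^{-2}\sum_{a|l}\omega_\mu^2(a)$ exactly as before. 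The main obstacle — really the only place needing care rather than just symbol-pushing — is getting the normalizations consistent: tracking how the ideal $ll'$ is identified with a rational integer, how $r^2\mid ll'$ interacts with the two primes above $p$, and confirming $\mathbb{N}(l)^{it}$ versus $\mathbb{N}(l)^{2it}$ and $\omega_\mu(l)^{-2}$ come out with the right exponents (the excerpt itself writes $\mathbb{N}(\xi)^{it}$ in Proposition \ref{propcsc} but $\mathbb{N}(l)^{2it}$ in \eqref{eq:cscs}, so pinning this down is precisely what the lemma is for). Once the split and inert local computations are done and shown to agree, multiplicativity assembles them into the global identities, completing the proof.
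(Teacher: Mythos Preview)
Your overall strategy --- reduce by multiplicativity to prime powers of a single rational prime $p$, then split into cases according to the splitting type of $p$ in $\mathbf{K}$ --- is exactly what the paper does, and part~(2) is indeed handled identically to part~(1).

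However, your split-prime bookkeeping has two concrete errors that you should fix. With $l=\mathfrak{p}_1^a\mathfrak{p}_2^b$ (so $\mathbb{N}(l)=p^{a+b}$), the Galois conjugate is $l'=\mathfrak{p}_1^b\mathfrak{p}_2^a$, hence $ll'=(\mathfrak{p}_1\mathfrak{p}_2)^{a+b}=(p)^{a+b}$ and the \emph{rational integer} $ll'$ equals $p^{a+b}$, not $p^{2(a+b)}$ (you conflated the ideal norm of $ll'$ with the integer $ll'$ itself). Second, the rational integers $r\in\mathbb{N}$ dividing $l$ are exactly $r=p^i$ with $0\le i\le \min(a,b)$, not $0\le i\le a+b$: since $p=\mathfrak{p}_1\mathfrak{p}_2$, one has $p^i\mid l$ iff both $\mathfrak{p}_1^i$ and $\mathfrak{p}_2^i$ divide $l$. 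With these two corrections your right-hand side becomes $\sum_{i=0}^{\min(a,b)}\tau_{it}\bigl(p^{a+b-2i}\bigr)$, and this is what must be matched to $p^{(a+b)it}\bigl(\sum_{a'=0}^a p^{-2a'it}\bigr)\bigl(\sum_{b'=0}^b p^{-2b'it}\bigr)$.

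A cleaner route (and the one the paper takes) is to push multiplicativity one step further: since $\mathfrak{p}_1$ and $\mathfrak{p}_2$ are \emph{distinct} primes of $\mathcal{O}_{\mathbf{K}}$, both sides of the identity are multiplicative in the ideal $l$, so for split $p$ it suffices to take $l=\mathfrak{p}_1^m$. Then $ll'=p^m$, the only rational $r\mid l$ is $r=1$, and the identity collapses to the single-line check
\[
p^{mit}\sum_{k=0}^m p^{-2kit}\;=\;\tau_{it}(p^m)\;=\;\sum_{k=0}^m \chi_D(p)^k\,p^{(2k-m)it},
\]
immediate since $\chi_D(p)=1$. The inert case, where $l=p^c$ (as an element, so $ll'=p^{2c}$) and $r=p^i$ with $0\le i\le c$, is then the one requiring the honest geometric-series manipulation with $\chi_D(p)=-1$; this is where the paper spends its effort and your sketch is correct in spirit.
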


\begin{proof}
By multiplicativity, we do this for a prime power. Notice $$\mathbb{N}(p^m)^{2it}\sigma_{-2it,0}(p^m)=\sum_{ab=p^m} \frac{\mathbb{N}(a)}{\mathbb{N}(b)}^{it}.$$ Remember $\tau_{it}(p^m)=\sum_{ab=p^m}\chi_{D}(a)(a/b)^{it},$ 
Assume $p$ inert prime, then we claim \begin{equation}\label{eq:pin} \sum_{\substack{r\in \N \\  r^2 | \N(l)} }\tau_{it}(\frac{ll'}{r^2})=\sum_{j=0}^m \tau_{it}(\frac{p^{2m}}{p^{2j}})=\sum_{ab=p^m} \frac{\mathbb{N}(a)}{\mathbb{N}(b)}^{it}.\end{equation}   Let $X=p^{it},$ then the LHS of \eqref{eq:pin} equals \begin{align*} \frac{1}{X^{2m}} \sum_{j=0}^m X^{2j} \sum_{k=0}^{2(m-j)} (-1)^k X^{2k} &=&   
 \frac{1}{X^{2m}} \sum_{j=0}^m X^{2j}\left(\frac{1 + (-1)^{2(m-j)}X^{2(2(m-j)+1)}}{1+X^2}\right) \\ &=& \frac{1}{X^{2m}(1+X^2)}\left(\frac{1-X^{2(m+1)}}{1-X^2} + \frac{X^{2(2m+1)}(1-X^{-2(m+1)})}{(1-X^{-2})}\right)\\ &=& \frac{1}{X^{2m}(1+X^2)}\left(\frac{1-X^{2(m+1)}}{1-X^2} + \frac{X^{2m+2}(X^{2(m+1)}-1)}{X^2(1-X^{-2})}\right) \\&=& \frac{1-X^{4(m+1)}}{X^{2m}(1-X^4)}.
\end{align*}
The RHS of \eqref{eq:pin} equals $$\sum_{ab=p^m} \frac{\mathbb{N}(a)}{\mathbb{N}(b)}^{it} = \sum_{k=0}^m X^{2(2k-m)}=\frac{1-X^{4(m+1)}}{X^{2m}(1-X^4)},$$ so we are done. 

For $p=p_1 p_2,$ we show for $l=p_1^m,$  \begin{equation}\label{eq:psplit} \sum_{\substack{r\in \N \\  r^2 | \N(l)} }\tau_{it}(\frac{ll'}{r^2})=\sum_{j=0}^m \tau_{it}(p^{m})=\sum_{ab=p_1^m} \frac{\mathbb{N}(a)}{\mathbb{N}(b)}^{it}.\end{equation} 
Let $X=p^{it},$ then it is clear both sides of \eqref{eq:psplit} equals $$X^{-2m} \sum_{j=0}^m X^{2j}.$$ For $p=\sqrt{D},$ the identity is as transparent as the split prime case.

The identity $$\omega_{\mu}(l)^{-1}\sigma_{0,\omega^2_{\mu}}(l)=\sum_{\substack{r\in \N \\ r^2 | \N(l)}} \psi_{\mu}(\frac{ll'}{r})$$ is proven in an identical fashion. Let us look at the most interesting case of $p_1^m$ where $p=p_1 p_2 \in \Z, p$ prime. The LHS equals $$   \frac{\sum_{k=0}^m  \omega^2_{\mu_k}(p_1^k)}{\omega_{\mu_k}(p_1^m)}=\sum_{k=0}^m  \omega_{\mu_k}(p_1^{2k-m}).$$ Now the $k=j, m-j$ terms are $$\omega_{\mu_k}(p_1^{-(m-2j)})+\omega_{\mu_k}(p_1^{m-2j})=\psi_{\omega_{\mu_k}}(p^{m-2j})$$ as $\omega_{\mu_k}(l)\omega_{\mu_k}(l')=1$ for any $l \in \ri.$ Summing over $j$ this equals $$\sum_{j=0}^m \psi_{\omega_{\mu_k}}(p^{m-2j})
=\sum_{j=0}^m \psi_{\omega_{\mu_k}}(\frac{p^{m}}{p^{2j}})=\sum_{\substack{r \in \N \\ r^2|\N(p_1^m)}} \psi_{\omega_{\mu_k}}(\frac{\N(p_1^m)}{r^2}).$$ This proves the identity in the split case and the inert and ramified cases are analogous.
\end{proof}

\subsection{Proof of Theorem \ref{theo}}\label{sec:thee}

\begin{prop}\{Proof of Theorem \ref{theo} (2)\}

For $l \in \ri,$ \begin{multline}\label{eq:almost}
 \frac{1}{X} \sum_{\mm} \sum_n g(\mm^2 n/X)CSC^{\K}_{n,l}=\\  2\pi\sum_{\substack{r\in \N \\  r^2 | \N(l)}} \bigg( \sum_{\pi_{D,\mu_k}}   h(V_1,\mu_k)h(V_2,\mu_k) c_{\frac{ll'}{r^2}}(\pi_{D,\mu_k}) \overline{c_{1}(\pi_{D,\mu_k})}+  CSC^{\Q}_{\frac{ll'}{r^2},1}\bigg)+O(X^{-M}) \end{multline} for any positive integer $M.$

\end{prop}

\begin{proof}
Using Lemma \ref{compa}, \eqref{eq:cscs} equals \begin{equation}
4\pi^3  \sum_{\substack{r\in \N \\  r^2 | \N(l)}} \sum_{k \in \Z, k \neq 0}  \frac{ h(V_1,\mu_k)h(V_2,\mu_k) \psi_{\mu_k}(\frac{ll'}{r^2})\overline{\psi_{\mu_k}(1)}}{L(1,\chi_D) L(1,\omega^2_{\mu_k})}+
\end{equation}
$$4\pi^2  \sum_{\substack{r\in \N \\  r^2 | \N(l)}} \int_{-\infty}^{\infty} \frac{h(V_1,t)h(V_2,t)\tau_{it}(\frac{ll'}{r^2})\overline{\tau_{it}(1)}}{ |L(1-2it,\chi_{D})|^2}+O(X^{-M}).$$

Let us look at the first sum. Incorporating the normalizations needed for the trace formula over $\Q$ and using the duplication formula of the Gamma functions, 
\begin{multline}\label{eq:tqq}4\pi^3  \sum_{\substack{r\in \N \\  r^2 | \N(l)}} \sum_{k \in \Z, k \neq 0}  \frac{ h(V_1,\mu_k)h(V_2,\mu_k) \psi_{\mu_k}(\frac{ll'}{r^2})\overline{\psi_{\mu_k}(1)}}{L(1,\chi_D) L(1,\omega^2_{\mu_k})}=\\2\pi ^2 \sum_{k \in \Z, k \neq 0}   h(V_1,\mu_k)h(V_2,\mu_k) \frac{ \Psi_{\mu_k}(\frac{ll'}{r^2})\overline{\Psi_{\mu_k}(1)}}{|\Gamma(\frac{1}{2}+i\mu_k)|^2L(1,\chi_D) L(1,\omega^2_{\mu_k})}=\\ 2\pi  \sum_{k \in \Z, k \neq 0}   h(V_1,\mu_k)h(V_2,\mu_k)   \Psi_{\mu_k}(\frac{ll'}{r^2})\overline{\Psi_{\mu_k}(1)} \frac{\cosh(\pi \mu_k)}{L(1,\chi_D) L(1,\omega^2_{\mu_k})}.\end{multline} Recall that the normalized Fourier coefficients $\Psi_{\mu_k}(r)$ are defined in Section \ref{sec:theaa}.

 If we want to associate this sum to a spectral sum of the Kuznetsov trace formula over $\Q,$ we must account for the orthonormalization on the spectral side of the trace formula over $\Q.$ For a automorphic representation $\pi_{D, \mu_k}$ the inner product is $$|| \pi_{D,\mu_k}||_2^2= \frac{L(1,sym^2(\psi_{\mu}))}{\cosh(\pi \mu_k)}= \frac{ L(1,\chi_D) L(1,\omega^2_{\mu_k})}{ \cosh(\pi _{\mu_k})}.$$ 

So \eqref{eq:tqq} equals $$2\pi  \sum_{\substack{r\in \N \\  r^2 | \N(l)}} \sum_{k \in \Z, k \neq 0}   h(V_1,\mu_k)h(V_2,\mu_k)  \frac{ \Psi_{\mu_k}(\frac{ll'}{r^2})\overline{\Psi_{\mu_k}(1)} }{||\pi_{D,\mu_k}||_2^2}.$$ This is exactly the spectral sum of the representations $\pi_{D,\mu_k}.$ Specifically this is $$ 2\pi  \sum_{\substack{r\in \N \\  r^2 | \N(l)}} \sum_{\pi_{D,\mu_k}}   h(V_1,\mu_k)h(V_2,\mu_k) c_{\frac{ll'}{r^2}}(\pi_{D,\mu_k}) \overline{c_{1}(\pi_{D,\mu_k})}.$$

Analogous to getting \eqref{eq:contmob}, the normalized Eisenstein series $n$-th Fourier coefficient over $\Q$ is  $$D_n(it,0)=\big( \sqrt{2\pi}n^{1/2}\Gamma(1/2+it) \pi^{it}\big) \frac{ \tau_{it}(n)}{n^{1/2+it} \Gamma(1/2+it) L(1+2it,\chi_{D})}= \frac{ \sqrt{2\pi} \pi^{it}}{ n^{it}}\frac{ \tau_{it}(n)}{ L(1+2it,\chi_{D})}.$$

So we can rewrite \begin{multline}4\pi^2  \sum_{\substack{r\in \N \\  r^2 | \N(l)}} \int_{-\infty}^{\infty} \frac{h(V_1,t)h(V_2,t)\tau_{it}(\frac{ll'}{r^2})\overline{\tau_{it}(1)}}{ |L(1-2it,\chi_{D})|^2}=\\ 2\pi \sum_{\substack{r\in \N \\  r^2 | \N(l)}} \int_{-\infty}^{\infty}h(V_1,t)h(V_2,t)D_{\frac{ll'}{r^2}}(it,0)D_{1}(it,0)dt =2\pi CSC^{\Q}_{\frac{ll'}{r^2},1}.\end{multline}

%Now it can be seen that the continuous spectrum part of \eqref{eq:cscsq} equals the LHS of \eqref{eq:xzx}. This completes Theorem \ref{theo} (2).

Using $h(V_1 *V_2, t)=h(V_1,t)h(V_2,t),$ we have proved for $l \in \ri,$
\begin{multline} \frac{1}{X} \sum_{\mm} \sum_n g(\mm^2 n/X)CSC^{\K}_{n,l}=\\  2\pi\sum_{\substack{r\in \N \\  r^2 | \N(l)}} \bigg( \sum_{\pi_{D,\mu_k}}   h(V_1,\mu_k)h(V_2,\mu_k) c_{\frac{ll'}{r^2}}(\pi_{D,\mu_k}) \overline{c_{1}(\pi_{D,\mu_k})}+  CSC^{\Q}_{\frac{ll'}{r^2},1}\bigg)+O(X^{-M}) \end{multline} for any positive integer $M.$ This completes Theorem \ref{theo} (2).

 \end{proof}

%Subtracting the continuous spectrums from both sides we get an equality of the discrete spectrum plus the $\mu$-sum.

Now using \eqref{eq:almost}, we can immediately imply from \eqref{eq:xzx} that  \begin{multline}\label{eq:cusp}
 \frac{1}{X} \sum_{\mm} \sum_n g(\mm^2 n/X)\sum_{\Pi \neq \mathbf{1}}
h(V,\nu_{\Pi})c_{n}(\Pi)\overline{c_{l}(\Pi)} =\\   2\pi\sum_{\substack{r\in \N \\  r^2 | \N(l)}}  \sum_{\pi_{D} \neq \pi_{D,\mu_k}} h(V_1*V_2,t_{\pi}){c_{\frac{ll'}{r^2}}(\pi_{D})}{\overline{c_1(\pi_{D})}}+O(X^{-M}).
\end{multline} for any integer $M \geq 0.$ This completes Theorem \ref{theo} (1).

\section{Hecke Algebra}\label{sec:hal}

We now get matching of the individual representations. 
We exploit the fact that each irreducible representation $\Pi$ has associated Fourier coefficients that obey Hecke relations: $$c_{n}
(\Pi)c_{m_1}(\Pi)=\sum_{r|(\nn,m_1)}c_{\frac{m_1\nn}{r^2}}(\Pi),$$ one gets 
\begin{equation}\sum_{\Pi \neq \mathbf{1}}
h(V,\nu_{\Pi})c_{\mu}(\Pi)c_{q}(\Pi)c_{\nu}(\Pi)= \sum_{r|(q,\nu)} \sum_{\Pi \neq \mathbf{1}}
h(V,\nu_{\Pi})c_{\mu}(\Pi)c_{\frac{q\nu}{r^2}}(\Pi).\end{equation}

Likewise, one gets \begin{multline}\label{eq:mulhecke}\sum_{\Pi \neq \mathbf{1}}
h(V,\nu_{\Pi})c_{\mu}(\Pi)\left[\prod_{i=1}^Nc_{q_i}(\Pi)\right]c_{\nu}(\Pi)=\\ \sum_{r_1|(q_1,\nu)}  
\sum_{r_2|(\frac{q_1\nu}{r_1^2},q_2)} \sum_{r_3|(\frac{q_1q_2\nu}{(r_1r_2)^2},q_3)}...\sum_{r_{N}|
\left(\frac{\prod_{i=1}^{N}q_i\nu}{\prod_{i}^{N}r_i^2},q_N\right)}  \sum_{\Pi \neq \mathbf{1}}
h(V,\nu_{\Pi})c_{\mu}(\Pi)c_{\frac{\prod_{i=1}^N q_i \nu}{\prod_{i=1}^N r_i^2}}(\Pi).\end{multline}

So one computes by using Theorem \ref{theo} and \eqref{eq:mulhecke} for any $q_i$ such that $(\prod_{i=1}^N q_i,D)=1,$ \begin{multline}\label{eq;theoh}
 \frac{1}{X} \sum_{\mm} \sum_n g(\mm^2 n/X)\sum_{\Pi \neq \mathbf{1}}
h(V,\nu_{\Pi})c_{n}(\Pi)\left[\prod_{i=1}^N c_{q_i}(\Pi)\right]{c_{l}(\Pi)}= \\ \sum_{r_1|(q_1,\nu)}  
\sum_{r_2|(\frac{q_1\nu}{r_1^2},q_2)} \sum_{r_3|(\frac{q_1q_2\nu}{(r_1r_2)^2},q_3)}...\\ \sum_{r_{N}|
\left(\frac{\prod_{i=1}^{N}q_i\nu}{\prod_{i}^{N}r_i^2},q_N\right)} \frac{1}{X} \sum_{\mm} \sum_n g
(\mm^2 n/X)\sum_{\Pi \neq \mathbf{1}}
h(V,\nu_{\Pi})c_{n}(\Pi)c_{\frac{\prod_{i=1}^N q_i \nu}{\prod_{i=1}^N r_i^2}}(\Pi)=\\ 
2\pi  \sum_{\substack{r\in \N \\  r  | \frac{\prod_{i=1}^N q_i \nu}{\prod_{i=1}^N r_i^2}}}   \sum_
{\pi_{D} \neq \pi_{D,\mu_k}} h(V_1*V_2,t_{\pi})c_{\frac{\mathbf{N}\left(\frac{\prod_{i=1}^N q_i \nu}{\prod_{i=1}^N r_i^2}\right)}{r^2}}(\pi_{D}){\overline{c_1(\pi_{D})}} + O(X^{-M})
\end{multline}

In particular, for any polynomial $F(c_{q_1},c_{q_2},...,c_{q_N})$ with complex coeffiicients, the equation \begin{equation}
 \frac{1}{X} \sum_{\mm} \sum_n g(\mm^2 n/X)\sum_{\Pi \neq \mathbf{1}}
h(V,\nu_{\Pi})c_{n}(\Pi)F(c_{q_1},c_{q_2},...,c_{q_N})\overline{c_{l}(\Pi)}
\end{equation} has a corresponding identity analogous to Theorem \ref{theo} over the forms of level $D$ on $\Q.$ Call the term corresponding to $F(c_{q_1},c_{q_2},...,c_{q_N})$ on the side of forms over $\Q,  T\circ F.$

With this freedom of choice of $F,$ if we were in a finite dimensional setting (e.g. the space of forms of a fixed weight $k$ or eigenvalue parameter $t_j$), we can choose such a polynomial $F$ to be zero on all but one single representation $\Pi.$

\subsection{Reduction to a finite dimensional setting}

Now we use Hypothesis \ref{stu} in the following propositions of \cite{V}:
\begin{prop}\label{venkm1}
Let $t_j$ be a discrete subset of $\mathbb{R}$ with $\{j: t_j \leq T\} \ll T^{r}$ for some r. Let, for each j, there be given a function $c_{X}(t_j)$ depending on $X$, so that $c_{X}(t_j) \ll t_{j}^{r'}$ for some $r'$- the implicit constant independent of $X$; similarly, for each $k$ odd, let there be given a function $c_{X}(k)$ depending on $X$ so that $c_{X}(k) \ll k^{r'}.$ Suppose that one has an equality 
\begin{equation}
\lim_{X \to \infty} \big(\sum_{j} c_{X}(t_j)h(V,t_j)+\sum_{k \text{ odd}} c_{X}(k)h(V,k)\big)=0
\end{equation}
for all $(h(V,t_j),h(V,k))$ that correspond via Sears-titchmarsh inversion to $V$. Then $\lim_{X \to \infty} c_{X}(t_j)$ exists for each $t_j$ and equals 0, and similarly the same holds for $\lim_{X \to \infty}c_{X}(k).$ This equality holds for all functions $h$ for which both sides converge.
\end{prop}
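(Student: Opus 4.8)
The plan is a standard spectral-localization argument: feed into the hypothesis test functions $V$ whose Bessel transforms $h(V,\cdot)$ are essentially concentrated near a single spectral point, so that the vanishing of the limit isolates one coefficient at a time. By the obvious symmetry between the two families it suffices to show $\lim_{X\to\infty} c_X(t_{j_0})=0$ for a fixed index $j_0$; the statement for a fixed odd weight $k_0$ is proved identically, and the last sentence of the proposition follows once every individual coefficient-limit is known. First I would record the bookkeeping: by hypothesis there is a constant $C_0$, independent of $X$, with $|c_X(t_j)|\le C_0(1+|t_j|)^{r'}$ and $|c_X(k)|\le C_0(1+k)^{r'}$, and $\#\{j:|t_j|\le T\}\ll T^{r}$. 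Hence, whenever a transform $h$ satisfies $|h(t)|\ll_h(1+|t|)^{-A}$ for some fixed $A>r+r'+2$, the two series $\sum_j c_X(t_j)h(t_j)$ and $\sum_{k\text{ odd}}c_X(k)h(k)$ converge absolutely, uniformly in $X$; since the Sears--Titchmarsh transforms $h(V,\cdot)$ of $V\in C_0^\infty(\R^{+})$ decay faster than any polynomial, this is the natural regime.

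The heart of the matter is the construction of localizing test functions: for every $\varepsilon>0$ I would produce $V_\varepsilon\in C_0^\infty(\R^{+})$ with (i) $h(V_\varepsilon,t_{j_0})=1$, (ii) $|h(V_\varepsilon,t)|\ll_\varepsilon(1+|t|)^{-A}$ for the fixed $A$ above, and (iii)
\begin{equation*}
\sum_{j\neq j_0}(1+|t_j|)^{r'}\,|h(V_\varepsilon,t_j)| \;+\; \sum_{k\text{ odd}}(1+k)^{r'}\,|h(V_\varepsilon,k)| \;<\;\varepsilon .
\end{equation*}
The existence of such $V_\varepsilon$ rests on the Paley--Wiener description of the image of the Sears--Titchmarsh transform: that image is a space of entire functions of exponential type with Schwartz decay on horizontal lines, which separates the points of any discrete set and permits Hermite-type interpolation at finitely many prescribed real points and integers. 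Concretely: fix a reference transform $h_0=h(V_0,\cdot)$ with $h_0(t_{j_0})=1$ and rapid decay; given $\varepsilon$, choose $S$ so large that the weighted tail $\sum_{|t_j|>S}+\sum_{k>S}$ applied to $C_0 h_0$ is $<\varepsilon/2$; then, staying inside the Paley--Wiener class, multiply or convolve $h_0$ by a further transform vanishing at the finitely many remaining bad points $t_j,k\le S$ and renormalize so that the value at $t_{j_0}$ is still $1$ and the decay in (ii) is retained. This is exactly the construction carried out in \cite{V}, so at this step one may simply cite it.

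To conclude, apply the hypothesis to $h(V_\varepsilon,\cdot)$ and split off the $j_0$-term:
\begin{equation*}
\lim_{X\to\infty}\bigl(c_X(t_{j_0})+R_X(\varepsilon)\bigr)=0,\qquad R_X(\varepsilon):=\sum_{j\neq j_0}c_X(t_j)h(V_\varepsilon,t_j)+\sum_{k\text{ odd}}c_X(k)h(V_\varepsilon,k).
\end{equation*}
By the uniform bounds on $c_X$ and (iii) one has $|R_X(\varepsilon)|\le C_0\varepsilon$ for every $X$, hence $\limsup_{X\to\infty}|c_X(t_{j_0})|\le C_0\varepsilon$; letting $\varepsilon\to0$ gives $c_X(t_{j_0})\to0$. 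Running the same argument with an odd weight in place of $t_{j_0}$ finishes the coefficientwise claim. For the final assertion, once each limit is $0$, for any admissible $h$ with both sides convergent one truncates each series at a large parameter $S$, bounds the tail uniformly in $X$ using $|c_X|\ll(1+|\cdot|)^{r'}$ and the decay of $h$, then lets $X\to\infty$ (killing the finitely many head terms) and afterwards $S\to\infty$.

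\textbf{Main obstacle.} The only nontrivial ingredient is the localizing test function $V_\varepsilon$ of the second paragraph: that the image of $C_0^\infty(\R^{+})$ under the signed Bessel transform is simultaneously rich enough to prescribe a value at $t_{j_0}$, annihilate finitely many other spectral points, and decay rapidly. This is a Paley--Wiener/interpolation statement about the Kontorovich--Lebedev-type transform underlying the Kuznetsov formula; once it is granted (as in \cite{V}), the remainder is soft analysis — absolute convergence plus a diagonal $\varepsilon$-argument.
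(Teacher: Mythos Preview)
Your proposal is correct. Note that the paper does not actually prove this proposition: it is quoted verbatim from \cite{V} (see the sentence ``Now we use Hypothesis \ref{stu} in the following propositions of \cite{V}'' immediately preceding it), so there is no in-paper argument to compare against. Your sketch is precisely the standard proof one finds in \cite{V}, and the ``main obstacle'' you isolate --- the existence of the localizing test function $V_\varepsilon$ --- is exactly the content of the companion Proposition~\ref{venkm2} that the paper states immediately afterward; so your proof of \ref{venkm1} amounts to ``apply \ref{venkm2} and do the $\varepsilon$-bookkeeping,'' which is the intended reduction.
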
  

\begin{prop}\label{venkm2}
Given $j_0 \in \mathbb{N}, \epsilon > 0$ and an integer $N >0,$ there is a $V$ of compact support so that $h(V,t_j)=1,$ and for all $j' \neq j_0, h(V,t_{j'}) \ll \epsilon(1+|t_{j'}|)^{-N},$ and for all $k$ odd, $h(V,k) \ll \epsilon k^{-N}.$ 

Given $k_0, \epsilon >0$ and an integer $N>0,$ there is a $V$ of compact support so that $h(V,k_0)=1, h(V,k) \ll \epsilon k ^{-N}$ for $k$ odd $k\neq k_0,$ and $h(V,t) \ll (1+|t|)^{-N}$ for all $\mathbb{R}.$ 
\end{prop}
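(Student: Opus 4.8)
The plan is to deduce both assertions from the Paley--Wiener-type description of the Bessel transform $V \mapsto (h(V,t))$ underlying the Kuznetsov formula, and then to build the target transform explicitly. Recall that by the Sears--Titchmarsh inversion already used above (compare the discussion around \eqref{gg} and \cite{H}, \cite{Iw}, \cite{V}) the assignment $V \mapsto h(V,\cdot)$ is, up to elementary factors, an explicitly invertible transform, and that $V \in C_0^\infty(\R^{+})$ corresponds precisely to an even function extending to an entire function $H(t):=h(V,t)$ of finite exponential type --- the type controlled by the support of $V$ --- which together with all its derivatives decays faster than any polynomial along horizontal lines, and whose residues (equivalently, values) at the imaginary arguments $t\in i\Z$ attached to the holomorphic spectrum recover the numbers $h(V,k)$. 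It therefore suffices to produce a single even entire function $H$ of finite exponential type, rapidly decreasing on $\R$, with $H(t_{j_0})=1$, with $|H(t_{j'})|\ll \epsilon(1+|t_{j'}|)^{-N}$ for $j'\neq j_0$, and with $|H(k)|\ll\epsilon k^{-N}$ for all odd $k$; inverting $H$ back through Sears--Titchmarsh then yields the required $V$.

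To construct $H$, first fix a radius $R>0$. By hypothesis $\#\{j: |t_j|\le T\}\ll T^{r}$, and the number of odd integers below $T$ is $\ll T$, so only finitely many $t_{j'}$ with $j'\neq j_0$, and finitely many odd weights $k$ (with their associated imaginary arguments), lie within distance $R$ of $t_{j_0}$; hence there is an even polynomial $P$ in $t^{2}$, of degree depending only on $R$ and $j_0$, with $P(t_{j_0})=1$, vanishing at all of them. Multiply $P$ by a high power of a normalised sinc centred at $t_{j_0}$, namely $\left(\frac{\sin(\eta(t-t_{j_0}))}{\eta(t-t_{j_0})}\right)^{2m}$ with $\eta\ge 1$, symmetrise the result under $t\mapsto -t$, and renormalise so that its value at $t_{j_0}$ equals $1$. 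This produces an even entire function $H$ of finite exponential type with $H(t_{j_0})=1$, vanishing at the chosen nearby exceptional points, and obeying $|H(t)|\ll_{R}(1+|t|)^{\deg P}(1+|t-t_{j_0}|)^{-2m}$ on $\R$ with $m$ arbitrarily large. Choosing first $R$ and then $m$ large enough relative to $\deg P$, $N$ and $\log(1/\epsilon)$ forces $|H(t_{j'})|\ll\epsilon(1+|t_{j'}|)^{-N}$ for all $j'\neq j_0$ and $|H(k)|\ll\epsilon k^{-N}$ for all odd $k$ as well. The second assertion, isolating a weight $k_0$, follows from the identical argument with the roles of the continuous parameter and the discrete-series weight interchanged in the construction of $H$.

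The one real obstacle is the requirement that $V$ be \emph{compactly supported}: one must control $h(V,t)$ on the continuous parameter and $h(V,k)$ on the holomorphic weights \emph{simultaneously}, yet these are values and residues of a single analytic object attached to $V$ and cannot be prescribed independently. Encoding the constraints into one even entire function of finite exponential type, as above, is precisely what reconciles them, and the polynomial counting hypothesis is exactly what keeps the interpolating polynomial $P$ of bounded degree, so that a single fixed (large) power of the sinc dominates both $P$ and the desired decay rate. The genuinely technical ingredient is the sharp Paley--Wiener statement in this normalisation --- that the inverse Sears--Titchmarsh transform of such an $H$ does lie in $C_0^\infty(\R^{+})$ --- which is part of the inversion theory of \cite{H}, \cite{Iw}, \cite{V}; I would cite it rather than reprove it.
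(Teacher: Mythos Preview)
The paper does not actually prove this proposition: it is quoted from Venkatesh \cite{V}, as the introductory line ``the following propositions of \cite{V}'' makes clear, so there is no in-paper argument to compare yours against. Your sketch is essentially the construction carried out in \cite{V}: use the Paley--Wiener description of $V\mapsto h(V,\cdot)$, manufacture an even entire $H$ of finite exponential type with the required values and decay (a polynomial to kill the finitely many nearby spectral points, a high sinc power for decay at infinity), and invert. Your claim that the discrete-series numbers $h(V,k)$ arise from the analytic continuation of $h(V,t)$ is correct: writing $B_\nu=-J_\nu\sin(\pi\nu/2)-Y_\nu\cos(\pi\nu/2)$ one sees that at odd integers $\nu=k-1$ this collapses to $(-1)^{k/2}J_{k-1}$, matching the factor $i^k$ in the definition of $h(V,k)$. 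This is precisely what couples the Maass and holomorphic constraints into a single interpolation problem, and your identification of the Paley--Wiener theorem as the one substantive input is accurate.

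One small caveat worth making explicit: a fixed power $2m$ of the sinc gives only polynomial decay of $H$ along $\R$, so the inverted $V$ lies in $C_0^{M}$ for some $M=M(m)$ rather than in $C_0^\infty$. The proposition only asks for compact support, and $m$ can be chosen as large as any particular application of the trace formula requires, so this is not a genuine gap --- but you should not assert $V\in C_0^\infty(\R^+)$ on the nose.
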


To apply the above propositions we apply Hypothesis \ref{stu}, which again is an artifact of the functional equation of the Asai L-function. Let \begin{multline}c_{X}(t_j)=\frac{1}{X}  \sum_{\substack{\Pi \\ t_{\Pi}=\{t_j,t_k\}}}h(V_2,t_{k}) \sum_{\mm} \sum_n g(\mm^2 n/X)c_{n}(\Pi)F(c_{q_1},c_{q_2},...,c_{q_N})\overline{c_{l}(\Pi)}-\\ 2\pi (T\circ F) \sum_{\substack{r\in \N \\  r^2 | \N(l)}}  {c_{\frac{ll'}{r^2}}(\phi_{t_j,D})}{\overline{c_1(\phi_{t_j,D})}},\end{multline} for any polynomial $F \in \C[x_1,..,x_N].$

Then we can choose an $F$ to isolate $\Pi$ with archimedean parameter $t_{\Pi}=\{t_{j},t_k\},$ with $j$ fixed and $k$ can vary. The problem is reduced to \begin{multline}\label{eq:toom0}
\frac{1}{X}   \sum_{\substack{\Pi \\ t_{\Pi}=\{t_j,t_k\}}}h(V_2,t_{k}) \sum_{\mm} \sum_n g(\mm^2 n/X)c_{n}(\Pi)\overline{c_{l}(\Pi)}=\\ 2\pi \sum_{\substack{\pi_{t}\\ t=t_j}} (T\circ F) \sum_{\substack{r\in \N \\  r^2 | \N(l)}}  {c_{\frac{ll'}{r^2}}(\pi_{D})}{\overline{c_1(\pi_{D})}} +O(X^{-M}).
\end{multline}
We can apply the above propositions again for the sum over $t_k$ to reduce to the equality 
\begin{equation}\label{eq:toom}
\frac{1}{X}   \sum_{\mm} \sum_n g(\mm^2 n/X)c_{n}(\Pi)\overline{c_{l}(\Pi)}=2\pi \sum_{\substack{\phi_{t,D}\\ t=t_j}} (T\circ F) \sum_{\substack{r\in \N \\  r^2 | \N(l)}}  {c_{\frac{ll'}{r^2}}(\pi_{D})}{\overline{c_1(\pi_{D})}} +O(X^{-M}).
\end{equation}

It is clear if we choose to isolate $t_{\Pi}=\{t_j,t_k\}$ with $j\neq k$ then the sum of forms over $\Q$ is zero. So for now on we assume $t_{\Pi}=\{t_j,t_j\}.$ 

It is easy to check that \begin{equation}\label{eq:tfc}
(T\circ F)=F( \sum_{\substack{r\in \N \\  r | q_1}}c_{\frac{\mathbf{N}(q_1)}{r^2}}(\pi_{D}),  \sum_{\substack{r\in \N \\  r | q_2}}c_{\frac{\mathbf{N}(q_2)}{r^2}}(\pi_{D}),..., \sum_{\substack{r\in \N \\  r | q_N}}c_{\frac{\mathbf{N}(q_N)}{r^2}})(\pi_{D}).
\end{equation}

%Furthermore, also using the Hecke qualities of $c_n(\pi_{D}),$ it is easy to check, though laborious, that $$
%\alpha_l(\pi_D):=\sum_{\substack{r\in \N \\  r^2 | \N(l)}}c_{\frac{ll'}{r^2}}(\pi_D)$$ has the properties of a 
%Hecke eigenvalue for a representation $\Pi$ over $\K/\Q$ with archimedean parameter $t_{\Pi}=\{t_j,t_j\}.$ 

Therefore, with \eqref{eq:tfc} the polynomial $F$ kills all but at most one of the terms $\pi_{D}$ on the RHS of \eqref{eq:toom}. So there is exactly one term on the RHS of \eqref{eq:toom} and so we have  

%If it kills all of them, this implies by the LHS of \eqref{eq:toom} that if $\Pi \neq 0$ there exists an $l$ such that $c_l(\Pi)\neq 0,$ and $$ \frac{1}{X}\sum_{n,m} g(\frac{nm^2}{X})c_{n}(\Pi)=O(X^{-M}),$$ for any $M>0.$ 

\begin{equation}\label{eq:okok} \frac{1}{X}  \sum_{\mm} \sum_n g(\mm^2 n/X)c_{n}(\Pi)\overline{c_
{l}(\Pi)}=2\pi  \sum_{\substack{r\in \N \\  r^2 | \N(l)}}  {c_{\frac{ll'}{r^2}}(\pi_{D})} +O(X^{-M}),
\end{equation} 

Hence if we let $l=1,$ we have \begin{equation}\frac{1}{X}\sum_{n,m} g(\frac{nm^2}{X})c_{n}(\Pi)=2\pi +O(X^{-M}).\end{equation}

But since \eqref{eq:okok} holds for all $l \in \ri,(l,D)=1,$ we must have $$c_l(\Pi)=\sum_{\substack{r\in \N \\  r^2 | \N(l)}}  {c_{\frac{ll'}{r^2}}(\pi_{D})}.$$ This proves Corollary \ref{bc}.

Using a standard Euler product calculation the Fourier coefficient equality also implies the equality of the L-functions $$L(s,\Pi)=L(s,\pi_{D})L(s,\pi_{D} \otimes \chi_D).$$ This is the definition that $\Pi$ is a base change or that $\Pi=BC_{\K/\Q}(\pi_{D}).$ If we had isolated an archimedean parameter $t_{\Pi}=\{t_j,t_k\}$ with $j\neq k,$ then as stated there would be no matching forms over $\Q$ and therefore $\Pi$ would not be a base change. 

By Mellin inversion \begin{multline}\label{eq:acf}
\frac{1}{X}\sum_{n,m} g(\frac{nm^2}{X})c_{n}(\Pi)=\frac{1}{2\pi i}\int_{(\sigma)}G(s)X^{s-1}L(s,\Pi,Asai)ds=\\ G(1)\zeta(2)Res_{s=1}L(s,\Pi,Asai) +O(X^{-M}).
\end{multline}The residue is nonzero precisely when $\Pi$ is a base change from a representation $\pi_D$ over $\Q.$ The last equality comes from shifting the contour to the left to $-M.$ Since $M$ is arbitrary, $L(s,\Pi,Asai)$ has analytic continuation to the entire plane with at most a simple pole at $s=1.$ This concludes Corollary \ref{bc}.

\begin{remark}
Using the equality $$Res_{s=1} L(s,\Pi, Asai)=\int_{Z(\mathbb{A}_{\Q})GL_2(\Q)\setminus GL_2(\mathbb{A}_{\Q})}\phi_{\Pi}(g)dg$$ from \cite{F}, we clearly have $\Pi$ is distinguished if and only if $\Pi$ is a base change which is also equivalent to the Asai L-function having a simple pole at $s=1.$

\end{remark}

\end{document}